\pgfplotsset{compat=1.7}
\newcommand{\loc}{\mathrm{loc}}
\newcommand{\R}{\mathbb{R}}
\newcommand{\N}{\mathbb{N}}
\newcommand{\eps}{\varepsilon}
\newcommand{\caps}{\mathrm{cap}_s}
\DeclareMathOperator*{\gammalim}{\Gamma-lim}
\newcommand{\abs}[1]{\left| #1 \right|}
\newcommand{\norm}[2]{\left\| #1 \right\|_{#2}}
\newcommand{\ddfrac}[2]{\frac{ \displaystyle #1}{ \displaystyle #2}}
\newcommand{\dist}[2]{\mathrm{dist}(#1,#2)}
\renewcommand{\div}{\mathrm{div}}
\newtheorem{proposition}{Proposition}[section]
\newtheorem{theorem}[proposition]{Theorem}
\newtheorem*{theorem*}{Theorem}
\newtheorem{corollary}[proposition]{Corollary}
\newtheorem{lemma}[proposition]{Lemma}
\theoremstyle{definition}
\newtheorem{definition}[proposition]{Definition}
\newtheorem{remark}[proposition]{Remark}
\numberwithin{equation}{section}
\begin{document}
\title[Regularity results for segregated configurations]
{Regularity results for segregated configurations \\ involving fractional Laplacian}
\date{}

\author[G. Tortone]{Giorgio Tortone}\thanks{}
\address{Giorgio Tortone \newline \indent Dipartimento di Matematica
	\newline\indent
	Alma Mater Studiorum Universit\`a di Bologna
	\newline\indent
	 Piazza di Porta San Donato 5, 40126 Bologna, Italy}
\email{giorgio.tortone@unibo.it}


\author[A. Zilio]{Alessandro Zilio}\thanks{}
\address{Alessandro Zilio \newline \indent Universit\'{e} Paris Diderot, Universit\'e de Paris,\newline \indent  Laboratoire Jacques-Louis Lions (CNRS UMR 7598),\newline \indent  8 place Aur\'elie Nemours, 75205, Paris CEDEX 13, France}
\email{azilio@math.univ-paris-diderot.fr}
\keywords{Free-boundary problem, optimal regularity, nonlocal diffusion, monotonicity formulas, segregation problems, variational methods}

\thanks{Work partially supported by the ERC Advanced Grant 2013 n.\ 339958 Complex Patterns for Strongly Interacting Dynamical Systems - COMPAT, held by Susanna Terracini. This work was partially supported by the project ANR-18-CE40-0013 SHAPO financed by the French Agence Nationale de la Recherche (ANR)}
\maketitle

\begin{abstract}
We study the regularity of segregated profiles arising from competition - diffusion models, where the diffusion process is of nonlocal type and is driven by the fractional Laplacian of power $s \in (0,1)$. Among others, our results apply to the regularity of the densities of an optimal partition problem involving the eigenvalues of the fractional
Laplacian. More precisely, we show $C^{0,\alpha^*}$ regularity of the density, where the exponent $\alpha^*$ is explicit and is given by
\[
	\alpha^* = \begin{cases}
		s &\text{for $s \in (0,1/2]$}\\
		2s-1 &\text{for $s \in (1/2,1)$}.
	\end{cases}
\]
Under some additional assumptions, we then show that solutions are $C^{0,s}$. These results are optimal in the class of H\"older continuous functions. Thus, we find a complete correspondence with known results in case of the standard Laplacian.
\end{abstract}


\section{Introduction}

In free-boundary problems, the regularity of the densities is a important step in showing the regularity and the geometrical structure of the solutions. This is true, in particular, for multi-phase problems, where the optimal regularity of the densities around the free-boundary is often a fundamental tool in deriving a description of the interface-set that separates the phases. The aim of this paper is to prove regularity results for a class of segregation models, covering in particular the optimal results.

Segregation models are a rather recent topic in free-boundary problems. These models usually describe two or more densities (them being population distributions, chemical compounds, components of different Bose-Einstein condensates) that are subject to diffusion (Brownian motion or jump-like processes) and strong negative interaction (competition, annihilation or repulsion). While the diffusion process tends to spread the densities homogeneously all over the domain, the negative interaction tends to disfavor the superposition of more than one density at each point. When combined, these two adversary processes bring forward pattern formation. From the point of view of the mathematical literature, this topic consists of many different areas of research, from existence theory of solutions, to regularity of the densities and of the free-boundary that emerges in the case of complete separation of the densities. Restricting ourselves to the topic of regularity of the densities, and to models that are similar to the ones we will consider in this manuscript, we can cite the contributions by Conti, Terracini and Verzini \cite{MR2151234}, Caffarelli and Lin \cite{MR2393430}, Noris, Tavares, Terracini and Verzini \cite{MR2599456}, and Soave and Zilio \cite{SoaveZilio_ARMA} in the case of standard diffusion processes. More recently, there have been advances in the theory of models with nonlocal diffusion by Terracini, Verzini and Zilio \cite{tvz1, tvz2} and of nonlocal competition by Caffarelli, Patrizi and Quitalo \cite{CaffPatrQuit} and Soave, Tavares, Terracini and Zilio \cite{zbMATH06870295}.

Here we are chiefly interested in the regularity of the densities in a segregation model that involves the fractional Laplacian of power $s$, for any $s \in (0,1)$. This paper is prompted by the quasi-optimal results contained in \cite{tvz1, tvz2, aletesi, tesi}. We start by recalling them here, together with some notation.

\noindent\textbf{Notation.} Let $s\in (0,1)$, $a=1-2s\in (-1,1)$. We denote $B_r(X) \subset \R^{n+1}$ is the ball of radius $r > 0$ centered at $X \in \R^{n+1}$. For any set $\Omega \subset \R^{n+1}$ we let $\Omega^+ = \Omega \cap \{y > 0\}$, $\partial^+ \Omega = \partial \Omega \cap \{y>0\}$, $\partial^0 \Omega = \partial \Omega \cap \{y = 0\}$. In particular, we let $S^{n-1}_r = \partial^0 B^r$. We consider the space of function $H^{1,a}(B_1)$ (see \cite{FKS,Nekvinda}), defined as the closure of $C^{\infty}(\overline{B_1})$ with respect the norm
\[
	\norm{u}{H^{1,a}(B_1)}^2 = \int_{B_1}{\abs{y}^a u^2\mathrm{d}X}+ \int_{B_1}{\abs{y}^a \abs{\nabla u}^2\mathrm{d}X}.
\]
We will always denote with $L_a = \mbox{div}(\abs{y}^a \nabla)$ the divergence form operator associated to the Muckenhoupt $A_2$ weight $X = (x,y) \mapsto  \abs{y}^a$. Given $\mathbf{u} = (u_1, u_2, \dots, u_k) \in H^{1,a}(B^+_1;\R^k)$ and $\mathbf{f} = (f_1, f_2, \dots, f_k) \in C(\R^k; \R^k)$ we define
\[
  \mathbf{f}(\mathbf{u}) = (f_1(u_1), f_2(u_2), \dots, f_k(u_k)).
\]
Similarly, letting
\[
  F_i(s) = \int_0^sf_i(t) \mathrm{d}t
\]
for $i=1,\cdots,k$, we introduce the function $\mathbf{F}\colon \R^k\to \R^k$, such that
\[
  \mathbf{F}(\mathbf{u}) = (F_1(u_1), F_2(u_2), \dots, F_k(u_k)).
\]
Finally, we consider the positive regularity exponent
\[
\alpha^* =
\begin{cases}
  s & 0<s \leq \frac{1}{2}, \\
  2s-1 & \frac{1}{2} < s < 1.
\end{cases}
\]

We are ready to state the results of interest in \cite{tvz1, tvz2, aletesi, tesi}.

\begin{theorem*} Let $\beta>0$, $\mathbf{f}_{\beta} \in C(\R^k; \R^k)$ be a collection of continuous functions, which map bounded sets into bounded sets uniformly with respect to $\beta$. Let $(\mathbf{u}_\beta)_\beta \in H^{1,a}(B^+_1;\R^k)$ be a family of solutions $\mathbf{u}_\beta = (u_{1,\beta},\dots,u_{k,\beta})$ of the system
\begin{equation}\label{beta}\tag{$P_\beta$}
\begin{cases}
- L_a u_{i,\beta} =0 
&\mathrm{in}\,\,\  B^+_1\\
-\lim_{y\to 0} y^a \partial_y u_{i,\beta} = f_{i,\beta}(u_{i,\beta})-\beta u_{i,\beta} \sum_{j\neq i} a_{ij} u_{j,\beta}^2 & \mathrm{on}\,\,\  \partial^0 B^+_1.
\end{cases}
\end{equation}
Let us assume that
\[
  \norm{\mathbf{u}_\beta}{L^\infty (B^+_1)} \leq M
\]
for a constant $M>0$ which is independent of $\beta$. Then, for any $\alpha \in (0,\alpha^*)$
\[
   \norm{\mathbf{u}_{\beta}}{C^{0,\alpha}(\overline{B^+_{1/2}})} \leq C,
\]
where $C=C(M,\alpha)$ is independent of $\beta$. Moreover, $(\mathbf{u}_\beta)_\beta$ is relatively compact in $H^{1,a}(B^+_{1/2})\cap C^{0,\alpha}(\overline{B^+_{1/2}})$, for any $\alpha \in (0,\alpha^*)$. Any accumulation point  $\mathbf{u}_\infty$  of the family  $(\mathbf{u}_\beta)_\beta$ when $\beta \to +\infty$ verifies
\[
	u_{i,\infty} u_{j,\infty} |_{y=0} \equiv 0 \qquad \text{for any $i \neq j$}.
\]
\end{theorem*}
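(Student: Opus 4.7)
My plan is to deduce all three assertions from a single blow-up/contradiction argument, reducing the uniform Hölder estimate to a Liouville-type rigidity result for entire segregated configurations of $L_a$-harmonic functions. The central tools I would need are a weighted Almgren frequency formula adapted to the Muckenhoupt weight $\abs{y}^a$ and a weighted Alt-Caffarelli-Friedman monotonicity formula for pairs of competing components, with the exponent $\alpha^*$ precisely characterizing the growth of nontrivial entire profiles.

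\textbf{Blow-up.} Fix $\alpha \in (0,\alpha^*)$ and assume towards a contradiction that $\norm{\mathbf{u}_{\beta_n}}{C^{0,\alpha}(\overline{B^+_{1/2}})} \to \infty$ along a sequence $\beta_n\to\infty$. A standard worst-point/Campanato extraction produces centers $X_n\in\overline{B^+_{1/2}}$, radii $r_n\to 0$ and amplitudes $L_n\to\infty$ such that
\[
\mathbf{v}_n(Z) = \frac{\mathbf{u}_{\beta_n}(X_n + r_n Z) - \mathbf{u}_{\beta_n}(X_n)}{L_n r_n^{\alpha}}
\]
is uniformly bounded in $C^{0,\alpha}$ on every compact set while retaining oscillation of order one at unit scale. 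The $\mathbf{v}_n$ solve a rescaled version of \eqref{beta} on $B^+_{1/(2r_n)}$, with an effective coupling $\tilde\beta_n$ expressible as an explicit power product of $\beta_n$, $L_n$ and $r_n$, and the source $\mathbf{f}_\beta(\mathbf{u}_\beta)$ becomes a negligible perturbation thanks to the uniform boundedness of $\mathbf{f}_\beta$ and $\mathbf{u}_\beta$. Passing to a subsequence one obtains a locally uniform limit $\mathbf{v}_\infty$, defined on $\R^{n+1}_+$ or on $\R^{n+1}$ according to whether $\dist{X_n}{\{y=0\}}/r_n$ stays bounded, that is non-constant and grows at most like $\abs{Z}^{\alpha}$ at infinity.

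\textbf{Liouville alternative.} Three scenarios arise according to the behaviour of $\tilde\beta_n$. If $\tilde\beta_n$ stays bounded, $\mathbf{v}_\infty$ is $L_a$-harmonic with at worst a locally smooth Neumann trace on $\{y=0\}$; the known regularity of $L_a$-harmonic functions then yields $C^{0,s}$ estimates, and since $\alpha<\alpha^*\le s$ the growth control forces $\mathbf{v}_\infty$ to be constant, contradicting the unit oscillation. If $\tilde\beta_n\to\infty$, the limit is a segregated configuration of $L_a$-harmonic functions satisfying $v_{i,\infty}v_{j,\infty}|_{y=0}\equiv 0$ for $i\ne j$; this is the core obstacle. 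I would exclude nontrivial such profiles with growth strictly below $\abs{Z}^{\alpha^*}$ via a weighted ACF-type functional
\[
\Phi(r) = \prod_{i=1,2}\frac{1}{r^{2\alpha^*}}\int_{B_r^+}\abs{y}^a\frac{\abs{\nabla v_i}^2}{\abs{Z}^{n-1+a}}\,\mathrm{d}Z,
\]
whose monotonicity along segregated pairs, combined with a lower bound $N(r;v_i)\ge\alpha^*$ for the weighted Almgren frequency at any touching free-boundary point, forces both components to vanish. The dichotomy $s\le 1/2$ versus $s>1/2$ in the definition of $\alpha^*$ reflects the sign of $a=1-2s$ and the spectrum of admissible homogeneous profiles in each regime.

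\textbf{Compactness and segregation.} Once the uniform $C^{0,\alpha}$ bound holds, compactness in $C^{0,\alpha}(\overline{B^+_{1/2}})$ follows from Ascoli-Arzelà. Testing \eqref{beta} against $u_{i,\beta}$, integrating by parts and using the uniform $L^\infty$ bound, one obtains
\[
\beta\int_{\partial^0 B^+_{1/2}}\sum_{j\ne i}a_{ij}\,u_{i,\beta}^2 u_{j,\beta}^2\,\mathrm{d}x \le C,
\]
so $u_{i,\beta}u_{j,\beta}\to 0$ strongly in $L^2(\partial^0 B^+_{1/2})$; combined with the uniform Hölder convergence this yields the pointwise segregation $u_{i,\infty}u_{j,\infty}|_{y=0}\equiv 0$. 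Passing to the limit in the energy identity for \eqref{beta} finally upgrades the convergence to strong $H^{1,a}_{\loc}(B^+_1)$, closing the three statements of the theorem.
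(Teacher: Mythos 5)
This theorem is not re-proved in the paper: it is recalled verbatim from \cite{tvz1,tvz2,aletesi,tesi}, and your skeleton (worst-point blow-up plus Liouville-type rigidity, with monotonicity formulas for $L_a$ as the engine) is indeed the strategy of those works. The genuine gap is in your central Liouville step. The monotonicity you assert for the product functional with each factor normalized by $r^{-2\alpha^*}$ is, for $s\le \tfrac12$ (where $\alpha^*=s$), exactly the \emph{sharp} Alt--Caffarelli--Friedman exponent for boundary-segregated pairs in the $|y|^a$-weighted half-ball; this is only known at $s=\tfrac12$, and its unavailability for general $s$ is precisely what limited \cite{tvz2} to small, non-explicit exponents. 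The passage to the full range $\alpha<\alpha^*$ in the cited works (and in the present paper for the limit class $\mathcal{G}^s$) does not go through a sharp ACF inequality: it relies on the Almgren frequency dichotomy at free-boundary points --- $N(X_0,\cdot,0^+)\ge s$ when two distinct components survive in the blow-up, while $N(X_0,\cdot,0^+)=2s-1$ occurs only under self-segregation, possible only for $s>\tfrac12$ --- combined with the monotonicity of $N$ (cf.\ Proposition \ref{mono}) to convert the frequency bound into a growth bound. Note also that running the frequency formula on your blow-up limit requires the Poho\v{z}aev identities, hence strong $H^{1,a}_{\loc}$ convergence of the blow-ups; you invoke strong convergence only at the very end, after the Hölder bound is already established, so as written the Liouville alternative is not available when you need it.

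A second, more technical flaw: subtracting $\mathbf{u}_{\beta_n}(X_n)$ from \emph{every} component destroys the structure of \eqref{beta}. The rescaled functions do not solve the same system with a single effective coupling $\tilde\beta_n$: the subtracted constants, divided by $L_nr_n^{\alpha}$, may diverge and re-enter the boundary condition through the terms $u_{i,\beta}\sum_{j\neq i}a_{ij}u_{j,\beta}^2$, so a trichotomy based on $\tilde\beta_n$ alone does not exhaust the possible limits. The arguments in \cite{tvz1} handle this by a case analysis on the size of these constants and on $\dist{X_n}{\{y=0\}}/r_n$, subtracting only when admissible; some such bookkeeping is indispensable for the "segregated limit'' case you rely on.
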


Thanks to the local realization of the fractional Laplacian as a Dirichlet-to-Neumann map \cite{MR2354493}, the previous result implies a global counterpart for a nonlocal problem, either set on the whole of $\R^n$ or in smooth domains with Dirichlet boundary.

\begin{theorem*}
Let $\beta>0$, $(f_{i,\beta}: \R \to \R)_\beta$ be a collection of continuous functions, which map bounded sets int bounded sets uniformly with respect to $\beta$. Let $(\mathbf{u}_\beta)_\beta \in H^{s}(\R^n;\R^k)$ be a family of solutions $\mathbf{u}_\beta = (u_{1,\beta},\dots,u_{h,\beta})$ of
\begin{equation*}
\begin{cases}
(-\Delta)^s u_{i,\beta} = f_{i,\beta}(u_{i,\beta})-\beta u_{i,\beta} \sum_{j\neq i} a_{ij} u_{j,\beta}^2 & \mathrm{in}\,\,\  \Omega\\
u_{i,\beta} \equiv 0 & \mathrm{in}\,\,\ \R^n\setminus\Omega,
\end{cases}
\end{equation*}
where $\Omega$ is either the whole $\R^n$ or a domain of $\R^n$ with uniformly smooth boundary. Let us assume that
\[
  \norm{\mathbf{u}_\beta}{L^\infty (\Omega)} \leq M
\]
for some constant $M>0$ independent on $\beta$. Then, for any $\alpha \in (0,\alpha^*)$
\[
  \norm{\mathbf{u}_{\beta}}{C^{0,\alpha}(\R^n)} \leq C,
\]
where $C=C(M,\alpha)$ is independent of $\beta$.  Moreover, $(\mathbf{u}_\beta)_\beta$ is relatively compact in $H^{s}_\mathrm{loc} \cap C^{0,\alpha}_\mathrm{loc}$, for any $\alpha \in (0,\alpha^*)$. Any accumulation point  $\mathbf{u}_\infty$  of the family $(\mathbf{u}_\beta)_\beta$ when $\beta \to +\infty$ verifies
\[
	u_{i,\infty} u_{j,\infty} \equiv 0 \qquad \text{for any $i \neq j$}.
\]
\end{theorem*}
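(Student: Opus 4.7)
The plan is to deduce this global nonlocal statement from the preceding local (extension) theorem via the Caffarelli-Silvestre extension of \cite{MR2354493}. For each $\beta$ and each $i$, let $\tilde u_{i,\beta}\in H^{1,a}(\R^{n+1}_+)$ denote the $L_a$-harmonic extension of $u_{i,\beta}$; equivalently, $\tilde u_{i,\beta}$ minimizes the weighted Dirichlet energy with trace $u_{i,\beta}$ on $\{y=0\}$. By the extension theorem, $c_s\,(-\Delta)^s u_{i,\beta} = -\lim_{y\to 0}y^a\partial_y \tilde u_{i,\beta}$ for a fixed constant $c_s$, so after absorbing $c_s$ into the right-hand side the vector $\tilde{\mathbf u}_\beta = (\tilde u_{1,\beta},\dots,\tilde u_{k,\beta})$ solves exactly a system of the form \eqref{beta} in every upper half-ball $B^+_r(X_0)$ with $X_0=(x_0,0)$ and $x_0\in\Omega$. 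Moreover, the maximum principle for $L_a$ and the weighted trace inequality ensure that $\|\tilde{\mathbf u}_\beta\|_{L^\infty(\R^{n+1}_+)}\leq \|\mathbf u_\beta\|_{L^\infty(\Omega)}\leq M$, so the hypotheses of the previous theorem are met with constants independent of $\beta$.

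Next I would fix $\alpha\in(0,\alpha^*)$ and cover $\Omega$ by finitely many balls $B_{r_j}(x_j)$ (in the case $\Omega=\R^n$ one uses a translation-invariant scaled version of the local theorem instead). For each $x_0\in\Omega$ with $\mathrm{dist}(x_0,\partial\Omega)\geq r$, applying the local theorem in $B^+_r(x_0,0)$ after rescaling yields a uniform bound
\[
  \|\tilde{\mathbf u}_\beta\|_{C^{0,\alpha}(\overline{B^+_{r/2}(x_0,0)})} \leq C(M,\alpha,r).
\]
Restricting to $\{y=0\}$ gives the interior Hölder estimate for $\mathbf u_\beta$. When $\Omega$ has uniformly smooth boundary I would combine this with the standard boundary regularity theory for the Dirichlet fractional Laplacian: since $u_{i,\beta}\equiv 0$ outside $\Omega$ and the competitive term $-\beta u_{i,\beta}\sum_{j\neq i}a_{ij}u_{j,\beta}^2$ has a sign, one obtains a $C^{0,s}$ bound up to $\partial\Omega$ that is independent of $\beta$ (this is where the constraint $\alpha^*\leq s$ is convenient). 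Patching the interior and boundary estimates via a standard covering argument and, in the $\Omega=\R^n$ case, using $\|\mathbf u_\beta\|_{L^\infty}\leq M$ to control the Hölder seminorm on scales larger than $1$, produces the claimed global bound $\|\mathbf u_\beta\|_{C^{0,\alpha}(\R^n)}\leq C$.

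With uniform $C^{0,\alpha}$ bounds in hand, compactness in $C^{0,\alpha'}_{\loc}$ for any $\alpha'<\alpha$ follows from Arzelà-Ascoli. For the $H^s_{\loc}$ compactness I would invoke the equivalence, via the extension, between $H^s(B_r)$ convergence of the traces and $H^{1,a}(B^+_r)$ convergence of the extensions, and then appeal to the $H^{1,a}$ compactness statement in the previous theorem (obtained there through uniform energy estimates for $\tilde{\mathbf u}_\beta$ coupled with the $L^\infty$ bound). Finally, any accumulation point $\mathbf u_\infty$ has an extension $\tilde{\mathbf u}_\infty$ satisfying $\tilde u_{i,\infty}\tilde u_{j,\infty}\big|_{y=0}\equiv 0$ for $i\neq j$ by the previous theorem, which is exactly the segregation condition $u_{i,\infty}u_{j,\infty}\equiv 0$ claimed on $\R^n$.

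The main obstacle, as I see it, is the boundary analysis when $\Omega\neq\R^n$: one must show that the Hölder estimate passes up to $\partial\Omega$ with a constant independent of $\beta$, even though the penalization term $\beta u_{i,\beta}\sum_{j\neq i}a_{ij}u_{j,\beta}^2$ is not uniformly bounded. The signed structure of this term together with the Dirichlet condition $u_{i,\beta}\equiv 0$ on $\R^n\setminus\Omega$ should be exploited through a barrier or comparison argument at the boundary, using that $\alpha^*\leq s$ matches the optimal boundary regularity for the fractional Laplacian with smooth domain; the interior analysis is then entirely delegated to the preceding local theorem.
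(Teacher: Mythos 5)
Your proposal follows essentially the same route the paper indicates: the paper does not prove this theorem in detail but recalls it from \cite{tvz1,tvz2,aletesi,tesi}, observing only that it follows from the preceding local extension theorem via the Caffarelli--Silvestre realization of $(-\Delta)^s$ as a Dirichlet-to-Neumann map, which is precisely your reduction (extension, uniform $L^\infty$ bound by the maximum principle, covering, and passage of the segregation condition to the trace). Your additional remarks on the boundary behaviour in the Dirichlet case and on compactness are consistent with how the cited works handle those points, so there is nothing to correct.
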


As mentioned in the previous statements, an important consequence of these results is that they imply a very useful compactness criterion for the solutions when $\beta \to +\infty$. If, moreover, we assume that the nonlinearities $(f_{i,\beta})_\beta$ converge uniformly on compact sets to some smooth function $f_i$ as $\beta \to +\infty$ with $f_i(0) = 0$, then also system $(P_\beta)$ passes to the limit (see below) and we can prove that the limit profiles belong to the class of segregated configurations $\mathcal{G}^s$ introduced in \cite{tvz1, tvz2, aletesi}. We recall here its definition.

\begin{definition}\label{class}
  Let $\mathcal{G}^s(B^+_1)$ stand for the set of functions $\mathbf{u} \in H^{1,a}_{\loc}(\overline{B^+_1};\R^k )$ whose components verify
 \begin{itemize}
   \item[(1)] $\mathbf{u} \in H^{1,a}(K\cap B^+_1) \cap C^{0,\alpha}(\overline{K\cap B^+_1})$, for every compact set $K\subset B$ and every $\alpha \in (0,\alpha^*)$;
   \item[(2)] $u_i\cdot u_j\lvert_{y=0}\equiv 0$ for every $i\neq j$ and they satisfy
\begin{equation}\label{eq}
  \begin{cases}
  -\div(y^a \nabla u_i) =0 &\mbox{in } B^+_1\\
  u_i\left(\lim_{y \to 0} y^a \partial_y u_i + f_i(u_i)\right)=0 & \mbox{on } \partial^0 B^+_1
  \end{cases}
 \end{equation}
where $f_i\colon\R\to\R$ are non-negative $\mathcal{C}^{1,\tau}$ functions, for some $\tau > 0$, and such that $f_i(0)=0$;
   \item[(3)] for every $X_0=(x_0,0) \in \partial^0 B^+_1$ and $r\in(0,\mbox{dist}(X_0,\partial B))$, the following Poho\v{z}aev type identity holds
   \begin{multline}\label{pohoz}
   (1-a-n)\int_{B_r^+(X_0)}{\abs{y}^a\abs{\nabla \mathbf{u}}^2\mathrm{d}X} + r\int_{\partial B_r^+(X_0)}{\abs{y}^a\abs{\nabla \mathbf{u}}^2\mathrm{d}\sigma}+\\
   +2n \int_{\partial^0 B_r^+(X_0)}{\sum_{i=1}^k F_i(u_i)\mathrm{d}x} -2r \int_{S^{n-1}_r(X_0)}{\sum_{i=1}^k F_i(u_i)\mathrm{d}x} =  2r \int_{\partial^+ B_r^+(X_0)}{\abs{y}^a (\partial_r \mathbf{u})^2 \mathrm{d}\sigma}.
   \end{multline}
 \end{itemize}
\end{definition}

\begin{remark}
The identities in \eqref{pohoz} are reminiscent of the classical Poho\v{z}aev identity. We point out that these identities contain integrals on set of co-dimension 2. These are meaningful because the densities $\mathbf{u}$ are continuous by definition.
\end{remark}

\begin{remark}
Since the functions $f_i$ are assumed $\mathcal{C}^{1,\tau}$ for some $\tau > 0$, we have that there exists a constant $C>0$ such that for any $s,t \in \R$
\[
  \left|f_i(s)-f_i(t)-f_i'(t)(s-t) \right| \leq C (s-t)^{1+\tau}.
\]
We observe that, by assumption, $\mathbf{u} \in L^\infty$. Thus it is sufficient to assume the previous inequality holds locally, that is for any $s, t \in [-\|\mathbf{u}\|_{L^\infty}, \|\mathbf{u}\|_{L^\infty}]$.
\end{remark}

In the particular case $s = 1/2$, i.e.\ $a=0$, in \cite{tvz1} it is shown that solutions $\mathbf{u} \in \mathcal{G}^{1/2}(B_1^+)$ are H\"older continuous of exponent $1/2$. This further improvement of regularity, obtained in the limit of segregation, is known to be optimal, and it is crucial in the study of the regularity and geometric properties of the common nodal set of the solutions (see \cite{desilva.terracini}).

On the contrary, for the generic exponent $s \in (0,1)$, the optimal regularity of solutions in $\mathcal{G}^s(B^+_1)$ is not covered by the previous results.  The aim of this paper is precisely to fill this gap. Specifically, we show here that

\begin{theorem}\label{thm main}
Let $\mathbf{u} \in \mathcal{G}^s(B_1^+)$. Then $\mathbf{u} \in C^{0,\alpha^*}(K \cup B_1^+)$ for any compact set $K \subset B_1$.
\end{theorem}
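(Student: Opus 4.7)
The plan is to upgrade the already established $C^{0,\alpha}$ estimate for $\alpha<\alpha^*$ up to the endpoint $\alpha^*$ by a blow-up analysis centered on an Almgren-type monotonicity formula. Away from $\partial^0 B_1^+$ each $u_i$ solves $L_a u_i=0$, so classical interior Schauder regularity for operators with $A_2$ weight applies, and the real question is reduced to the behavior at interface points $X_0=(x_0,0)\in \partial^0 B_1^+$, particularly those where two or more components vanish simultaneously.

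At such a point I would introduce the Almgren energy, mass, and frequency
\[
E(X_0,r)=\int_{B_r^+(X_0)}\abs{y}^a\abs{\nabla\mathbf{u}}^2\, dX -2\int_{\partial^0 B_r^+(X_0)}\sum_{i=1}^k F_i(u_i)\, dx,
\]
\[
H(X_0,r)=\int_{\partial^+ B_r^+(X_0)}\abs{y}^a\abs{\mathbf{u}}^2\, d\sigma,\qquad N(X_0,r)=\frac{r\,E(X_0,r)}{H(X_0,r)}.
\]
Combining the elementary derivatives of $E$ and $H$ with the Pohožaev identity \eqref{pohoz}, which is exactly the tool needed to rewrite the inner term in $E'$, and with a Cauchy--Schwarz inequality, I expect a differential inequality of the form $(\log N(X_0,r))'\geq -Cr^{\tau-1}$, reflecting the $\mathcal{C}^{1,\tau}$ regularity of the $f_i$'s and the fact that $f_i(0)=0$ forces $F_i(u_i)$ to enter at higher order. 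This makes $r\mapsto N(X_0,r)$ monotone up to an $O(r^\tau)$ perturbation, so the limit $N(X_0,0^+)$ exists.

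The heart of the argument is to prove $N(X_0,0^+)\geq \alpha^*$ at every $X_0\in\partial^0 B_1^+$. Assuming by contradiction $N(X_0,0^+)=d<\alpha^*$, I set up the blow-up family $\mathbf{u}_r(X)=\mathbf{u}(X_0+rX)/L(r)$ with $L(r)=\sqrt{H(X_0,r)/r^{n+a}}$. The quasimonotonicity of $N$ forces a doubling estimate on $L(r)$, which, combined with the uniform $C^{0,\alpha}$-equicontinuity available for any $\alpha\in(d,\alpha^*)$, makes $\{\mathbf{u}_r\}$ precompact. Because $f_i(0)=0$ and $L(r)\to 0$, the nonlinearity scales out, and any limit $\mathbf{u}_\infty$ is a nontrivial $d$-homogeneous element of the ``free'' segregated class on $\R^{n+1}$, with components whose traces on $\{y=0\}$ have pairwise disjoint supports. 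The hard part is then a classification statement: no such $d$-homogeneous profile can exist when $d<\alpha^*$. I would dichotomize according to how many components survive in the limit and rule out each case through a Friedland--Hayman type lower bound on the first eigenvalue of a partition problem on the weighted half-sphere $\partial^+ B_1^+$ with mixed Neumann/Dirichlet conditions on $\partial^0 B_1^+$. In the one-component case this reduces to the classification of $L_a$-harmonic functions with mixed trace conditions, giving the minimal homogeneity $s$ for $s\leq 1/2$ and $2s-1$ for $s>1/2$; the multi-component case builds on this by monotonicity with respect to refinement of the partition of the equator.

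Once $N(X_0,0^+)\geq\alpha^*$ is established uniformly in $X_0$ on compact sets, the quasimonotonicity of $N$ yields the boundary growth $H(X_0,r)\leq Cr^{n+a+2\alpha^*}$, which in turn gives $\sup_{B_r^+(X_0)}\abs{\mathbf{u}}\leq Cr^{\alpha^*}$ via a Caccioppoli estimate and interior $L_a$ bounds applied in $B_{r/2}^+(X_0+(0,r/2))$. A Campanato-type iteration combining this boundary decay with the interior Hölder regularity for $L_a$ then yields $\mathbf{u}\in C^{0,\alpha^*}(\overline{K\cap B_1^+})$ for every compact $K\subset B_1$, as asserted.
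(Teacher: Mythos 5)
Your overall architecture (Almgren frequency built from the Poho\v{z}aev identity, a lower bound on $N(X_0,0^+)$, boundary growth of $H$, then a Morrey/Campanato conclusion) is the same skeleton as the paper's, but the step you yourself call the heart of the argument is left as a sketch, and as stated it has a genuine gap. You propose to prove $N(X_0,0^+)\geq\alpha^*$ by blow-up plus a classification of $d$-homogeneous segregated profiles via a Friedland--Hayman type eigenvalue bound on the weighted half-sphere. First, that classification is precisely the deep input that is \emph{not} needed here: a partition-eigenvalue bound with mixed conditions governs profiles in which at least two components survive, and there it yields the exponent $s$, not $\alpha^*$; for $s>1/2$ the extremal value $2s-1$ comes from one-component, self-segregating homogeneous solutions, which a Friedland--Hayman argument does not see, and ruling out homogeneities strictly between $2s-1$ and $s$ in that regime requires the one-dimensional fundamental-solution analysis of the earlier works, none of which you carry out. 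The paper avoids this entirely: since $\mathbf{u}\in\mathcal{G}^s$ is by definition $C^{0,\alpha}_{\loc}$ for every $\alpha<\alpha^*$ and $\mathbf{u}(X_0)=\mathbf{0}$ at free-boundary points, one has $H(X_0,r)\leq C_\alpha r^{2\alpha}$, and integrating $\frac{d}{dr}\log H=\frac2r N$ against the quasimonotonicity of $e^{C\Psi(r)}(N+1)$ immediately forces $N(X_0,0^+)\geq\alpha^*$ by contradiction, with no blow-up and no classification. Your blow-up compactness and the scaling-out of the nonlinearity are fine, but without a proof of the classification your argument does not close.

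A second, smaller issue is the final step. The delicate point is not the interior (where $L_a$ is uniformly elliptic) nor the interface points alone, but the \emph{uniformity} of the estimates at points of $\partial^0 B^+$ that are close to, but not on, $\Gamma(\mathbf{u})$: there one component solves a semilinear Neumann problem ($-\partial_y^a u=f(u)$) and the others vanish on the trace, and the three regimes scale differently in $r$ (weights $r^{n}$, $r^{n+a}$, $r^{n+1-a}$). The paper handles this by separate monotonicity statements (Lemmas \ref{techn.2}, \ref{techn.3}, Proposition \ref{prp mono E u}, which requires subtracting the explicit profile $\frac{1}{1-a}y^{1-a}f(u(X_0))$) and then a contradiction argument on the Morrey quotient showing that unboundedness can always be pushed to free-boundary points, where Corollary \ref{upper E alpha u} applies. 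Your closing sentence, ``a Campanato-type iteration combining this boundary decay with the interior H\"older regularity,'' glosses over exactly this matching of regimes with constants independent of $\dist{X}{\Gamma(\mathbf{u})}$; you would need to supply an argument of this kind (or reproduce the paper's) for the proof to be complete.
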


This regularity result can be used to show an improvement of the regularity of limit profiles in the case non-local segregation models. For instance, we can easily show the following.
\begin{corollary}
Let $\beta>0$, $(f_{i,\beta}: \R \to \R)_\beta$ be a collection of continuous functions, which map bounded sets into bounded sets uniformly with respect to $\beta$. Assume, moreover, that $f_{i,\beta} \to f_i$ locally uniformly for $\beta \to \infty$, where
\[
	f_i \in C^{1,\tau}_\loc(\R) \quad \text{and} \quad f_i(0) = 0 \qquad \text{for $i = 1, \dots, k$ and some $\tau > 0$.}
\]
Let $(\mathbf{u}_\beta)_\beta \in H^{s}(\R^n;\R^k)$ be a family of solutions $\mathbf{u}_\beta = (u_{1,\beta},\dots,u_{h,\beta})$ of
\begin{equation*}
\begin{cases}
(-\Delta)^s u_{i,\beta} = f_{i,\beta}(u_{i,\beta})-\beta u_{i,\beta} \sum_{j\neq i} a_{ij} u_{j,\beta}^2 & \text{in}\,\,\  \Omega\\
u_{i,\beta} \equiv 0 & \text{in}\,\,\ \R^n\setminus\Omega,
\end{cases}
\end{equation*}
where $\Omega$ is either the whole $\R^n$ or a domain of $\R^n$ with uniformly smooth boundary. Let us assume that
\[
  \norm{\mathbf{u}_\beta}{L^\infty (\Omega)} \leq M
\]
for some constant $M>0$ independent on $\beta$. Then, for any $\alpha \in (0,\alpha^*)$
\[
  \norm{\mathbf{u}_{\beta}}{C^{0,\alpha}(\R^n)} \leq C,
\]
where $C=C(M,\alpha)$ is independent of $\beta$. Moreover, $(\mathbf{u}_\beta)_\beta$ is relatively compact in $H^{s}_\mathrm{loc} \cap C^{0,\alpha}_\mathrm{loc}$, for any $\alpha \in (0,\alpha^*)$. The limit set of $\{\mathbf{u}_\beta\}_\beta$ is a subset of $H^{s}_\loc \cap C^{0,\alpha^*}_\loc$, and any accumulation point $\mathbf{u}_\infty$ verifies
\[
	\begin{cases}
		(-\Delta)^s u_{i,\infty} = f_{i}(u_{i,\infty}) & \text{in}\,\,\  \mathrm{int} \left(\mathrm{supp} \, u_{i,\infty} \right) \\
		u_{i,\infty} u_{j,\infty} \equiv 0 &\text{for any $i \neq j$}.
	\end{cases}
\]
\end{corollary}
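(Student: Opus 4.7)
The plan is to combine the already-stated compactness theorem with the new interior regularity result Theorem \ref{thm main}, after verifying that any accumulation point belongs to the class $\mathcal{G}^s$. First, I would invoke the Caffarelli--Silvestre extension \cite{MR2354493}, so that each $u_{i,\beta}$ is realized as the trace on $\{y=0\}$ of an extended function on $\R^{n+1}_+$ satisfying $L_a u_{i,\beta} = 0$ with trace condition $-\lim_{y\to 0^+} y^a \partial_y u_{i,\beta} = f_{i,\beta}(u_{i,\beta}) - \beta u_{i,\beta} \sum_{j\neq i} a_{ij} u_{j,\beta}^2$. The second theorem cited in the excerpt immediately yields uniform $C^{0,\alpha}$ bounds for every $\alpha \in (0,\alpha^*)$ and relative compactness in $H^s_\loc \cap C^{0,\alpha}_\loc$, covering the first two claims of the corollary.

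Second, I would fix an accumulation point $\mathbf{u}_\infty$ and show it satisfies the three conditions in Definition \ref{class} on any $B_1^+$ contained in the extended domain. Condition (1) is immediate from the uniform H\"older bounds. For (2), the PDE $L_a u_{i,\infty} = 0$ and the segregation $u_{i,\infty} u_{j,\infty}|_{y=0} = 0$ are inherited from the $\beta$-problem by the earlier compactness statement. The boundary equation $u_{i,\infty}(\lim_{y\to 0} y^a \partial_y u_{i,\infty} + f_i(u_{i,\infty})) = 0$ is then obtained by testing against smooth functions supported in the interior of the support of $u_{i,\infty}|_{y=0}$: by continuity and segregation, on such sets the competition term $\beta u_{i,\beta} \sum_j a_{ij} u_{j,\beta}^2$ vanishes uniformly for large $\beta$, and $f_{i,\beta}\to f_i$ locally uniformly. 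This simultaneously gives $(-\Delta)^s u_{i,\infty} = f_i(u_{i,\infty})$ in the interior of $\mathrm{supp}\, u_{i,\infty}$.

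The main technical point is condition (3), the Poho\v{z}aev identity. At fixed $\beta$, testing $L_a u_{i,\beta} = 0$ against $(X-X_0)\cdot \nabla u_{i,\beta}$ on $B_r^+(X_0)$ and using the boundary equation produces an identity of the form \eqref{pohoz} plus a competition contribution of the type $c_n \beta \int_{\partial^0 B_r^+(X_0)} \sum_{i\neq j} a_{ij} u_{i,\beta}^2 u_{j,\beta}^2 \, \mathrm{d}x$. From the cited theorems we have strong convergence $\mathbf{u}_\beta \to \mathbf{u}_\infty$ in $H^{1,a}_\loc$ and in $C^{0,\alpha}_\loc$; moreover the energy estimate $\beta \int u_{i,\beta}^2 u_{j,\beta}^2 \to 0$ as $\beta \to \infty$ is known from \cite{tvz1,tvz2}. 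Together these allow passage to the limit in every term, yielding \eqref{pohoz} for $\mathbf{u}_\infty$. A small subtlety is that the identity must hold for \emph{every} admissible $r$, which requires Fubini together with the strong $H^{1,a}$ convergence on a set of $r$ of full measure and then a continuity argument in $r$ using the H\"older regularity.

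Once $\mathbf{u}_\infty \in \mathcal{G}^s(B_1^+)$ is established, Theorem \ref{thm main} applies and gives $\mathbf{u}_\infty \in C^{0,\alpha^*}_\loc$, completing the proof. The hardest ingredient is the Poho\v{z}aev passage to the limit: it is not purely formal because of the trace integrals on sets of codimension two, but all the necessary estimates are already available from the compactness theorem and from the vanishing of the competition energy established in \cite{tvz1,tvz2}.
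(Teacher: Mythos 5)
Your proposal is correct and follows essentially the route the paper intends: the uniform bounds and compactness are quoted from the earlier theorems, membership of accumulation points in $\mathcal{G}^s$ (including the passage to the limit in the Poho\v{z}aev identity using the vanishing competition energy, as carried out in \cite{tvz1,tvz2,aletesi}) is the step the paper delegates to those references, and Theorem \ref{thm main} then upgrades the regularity of the limit to $C^{0,\alpha^*}_\loc$. The only point to phrase carefully is the vanishing of the competition term on compact subsets of $\{u_{i,\infty}>0\}$, which should be taken in the $L^1_\loc$ (or locally uniform, via the barrier arguments of the cited works) sense rather than asserted as an immediate consequence of uniform convergence alone.
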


As before, the previous result implies a regularity result for the trace of functions in $\mathbf{u}$, that are solutions to a nonlocal problem. Under some additional minimality assumptions (i.e., lack of self-segregation, see Corollary \ref{cor reg limit profile}), we can push further the regularity and show that solutions are actually $C^{0,s}$ regular. We have dedicated the last section of this manuscript to the exposition of such result. There we will prove the following result. We recall that a set $\omega \subset \R^n$ is an $s$-quasi open if for any $\eps > 0$ there exists an open set $\omega_\eps \subset \R^n$ such that $\caps(\omega \triangle \omega_\eps) \leq \eps$. We refer to Section \ref{minimal.solution} for the definition of the $s$-capacity.

\begin{theorem}
Let $k \geq 2$ and $\Omega \subset \R^{n}$ bounded and smooth domain. We consider $k$ $s$-quasi open and disjoint subsets of $\Omega$, denoted by $(\omega_1, \dots, \omega_k)$, and the associated functional
\begin{equation*}
  I(\omega_1, \dots, \omega_k) = \sum_{i=1}^{k} \lambda_{1,s}(\omega_i)
\end{equation*}
where $\omega \mapsto \lambda_{1,s}(\omega)$ is the generalized principle eigenvalue of $\omega$ with Dirichlet boundary conditions,
\[
    \lambda_{1,s}(\omega) = \inf\left\{ C_{n,s} \int_{\R^n}\int_{\R^n} {\frac{\abs{u(x)-u(y)}^2}{\abs{x-y}^{n+2s}}\mathrm{d}x\mathrm{d}y} : \begin{array}{l} u \in H^s(\R^n) \text{ with } \\\caps( \{ u \neq 0\} \setminus \omega ) = 0\\ \text{and } \|u\|_{L^2} = 1 \end{array} \right\}.
\]
There exist $s$-quasi open and disjoint sets $(\omega_1, \dots, \omega_k)$ that minimize the functional $I$. Moreover any minimizer of $I$ is equivalent (in the $\caps$ sense) to $k$ disjoint open sets. The first eigenfunction corresponding to any optimal partition is a $C^{0,s}$ function and, together, their extension belongs to the class $\mathcal{G}_s$.
\end{theorem}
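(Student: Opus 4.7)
The plan proceeds in three steps: existence by the direct method, verification that the Caffarelli-Silvestre extension of each first eigenfunction of a minimizing partition belongs to the class $\mathcal{G}^s$ from Definition \ref{class}, and a continuity plus minimality argument to produce the open representatives and to upgrade the H\"older exponent to $s$ via Corollary \ref{cor reg limit profile}. For existence I would use the variational characterization
\[
\inf I = \inf \Bigl\{ \sum_{i=1}^k \|u_i\|_{\dot{H}^s(\R^n)}^2 : u_i \in H^s_0(\Omega),\ \|u_i\|_{L^2}=1,\ u_i u_j = 0 \text{ a.e.\ for } i\neq j \Bigr\}.
\]
A minimizing sequence is then bounded in $H^s_0(\Omega)$; Rellich compactness produces $L^2$-strong and $H^s$-weak limits; the disjointness condition passes to the a.e.\ limit; and lower semicontinuity of the Gagliardo seminorm gives a minimizer $(u_i^\infty)$ whose level sets $\omega_i=\{u_i^\infty\neq 0\}$ form the desired $s$-quasi open minimizing partition.

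Given such a minimizer, with $u_i \geq 0$ a first eigenfunction on $\omega_i$, I let $\tilde{u}_i$ denote its Caffarelli-Silvestre extension. Conditions (1) and (2) of Definition \ref{class} are verified directly: $L_a \tilde{u}_i = 0$ in $B_1^+$ by construction, $u_i u_j \equiv 0$ on $\partial^0 B_1^+$ by the partition constraint, and on $\omega_i$ the eigenvalue equation rewrites as $-\lim_{y\to 0} y^a\partial_y \tilde{u}_i = c_{n,s}\lambda_{1,s}(\omega_i)\, u_i =: f_i(u_i)$, with $f_i$ smooth and vanishing at zero; an $L^\infty$ bound by Moser iteration applied to the eigenvalue equation, combined with the preceding theorem, gives the required $C^{0,\alpha}$ regularity in (1). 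For the Poho\v{z}aev identity \eqref{pohoz} I would argue by domain variation: for $X_0 \in \partial^0 B_1^+$ and $r \in (0, \dist{X_0}{\partial B_1})$, consider the tangential deformations $\Phi_t(x,y) = (x + t\,\xi(x), y)$ with $\xi(x)=\chi(|x-x_0|/r)(x-x_0)$, pull back the tuple $\mathbf{u}$ and the competing partition, and compute the derivative at $t=0$ of the associated sum of Rayleigh quotients; its vanishing at the minimizer, with the $L^2$-normalizations handled by Lagrange multipliers which coincide with the $\lambda_{1,s}(\omega_i)$, combined with the equations yields \eqref{pohoz}. I expect the bulk of the work to lie in this calculation: although domain variation is classical, one must restrict to horizontal deformations in order to preserve the extension framework, track carefully the pullback of the degenerate-weight energy $\int |y|^a |\nabla \tilde{u}|^2$, and bookkeep the $F_i$-contributions that generate the boundary integrals on $\partial^0 B_r^+(X_0)$ and $S^{n-1}_r(X_0)$.

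Once $(\tilde{u}_i) \in \mathcal{G}^s$, Theorem \ref{thm main} delivers continuity of each $u_i$, so each $\{u_i > 0\}$ is open; these sets are pairwise disjoint by the trace condition and coincide with $\omega_i$ modulo $\caps$-null sets, since otherwise replacing $\omega_i$ by $\{u_i > 0\}$ would produce an admissible competitor with the same eigenfunctions and hence the same value of $I$, in contradiction with strict $\caps$-inclusion and minimality. Finally, each $u_i$ is a nonnegative first eigenfunction on its own support and therefore does not change sign there, which is precisely the no-self-segregation hypothesis of Corollary \ref{cor reg limit profile}; invoking that corollary then upgrades the interior exponent from $\alpha^*$ to the sharp $C^{0,s}$ asserted in the theorem.
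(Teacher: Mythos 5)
Your existence step is essentially the paper's (the reformulation in terms of eigenfunctions and the direct method, Lemma \ref{lem sci-coe}), but the heart of the theorem --- that the extension of a minimizer belongs to $\mathcal{G}^s$ --- is where your plan has genuine gaps. Items (1) and (3) of Definition \ref{class} are not ``verified directly''. Each $u_i$ is an eigenfunction of a quasi-open set with completely unknown boundary, so interior regularity on $\omega_i$ plus Moser iteration says nothing about continuity \emph{across} the free boundary; and ``the preceding theorem'' cannot be invoked here: the uniform H\"older theorems recalled in the introduction concern solutions of the $\beta$-system \eqref{beta}, while Theorem \ref{thm main} presupposes $\mathbf{u}\in\mathcal{G}^s$ and would be circular. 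This is precisely why the paper does not argue at the minimizer directly: it introduces the penalized functionals $J_\beta$ in \eqref{eqn functbeta} (with the selection term $e(u_i-\bar u_i)$), proves $\Gamma$-convergence and convergence of the $\beta$-minimizers to the fixed minimizer $\mathbf{\bar u}$ (Lemma \ref{lem sci-coe2}), and then imports from \cite{tvz1,tvz2,ale} the uniform $C^{0,\alpha}$ bounds, the $H^{1,a}$ compactness and the Poho\v{z}aev identities for the $\beta$-solutions, which pass to the limit and give all three conditions of Definition \ref{class} simultaneously. Your substitute for item (3) --- an inner variation with the purely horizontal field $\Phi_t(x,y)=(x+t\,\xi(x),y)$ --- would not produce \eqref{pohoz}: that identity is the one generated by the full radial field $(x-x_0,y)$, which is already tangent to $\{y=0\}$ (so nothing forces you to suppress the vertical component), and a horizontal-only variation yields a different identity, missing in particular the $(1-a-n)$ bulk coefficient and the $\abs{y}^a(\partial_r\mathbf{u})^2$ term. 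Moreover the integrals over $S^{n-1}_r$ in \eqref{pohoz} only make sense once continuity is known, so the regularity issue must be settled before, not after, any such computation.

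The last step is also incorrect as stated. The hypothesis \eqref{almgren.bound} of Corollary \ref{cor reg limit profile} is not a sign condition: in the dichotomy recalled after that corollary, $N(X_0,\mathbf{u},0^+)=2s-1$ corresponds to a free-boundary point near which only \emph{one} density is present, the one-dimensional model being the nonnegative profile $\abs{x}^{2s-1}$; a nonnegative first eigenfunction can perfectly well vanish on an interface and be positive on both sides of it, so ``does not change sign on its support'' excludes nothing. The paper rules out self-segregation through minimality of the partition: if only $u_1$ is present in some ball $B_r(x_0)\subset\Omega$, compare with the partition in which $\omega_1$ is replaced by $\omega_1\cup B_r(x_0)$; monotonicity of $\lambda_{1,s}$ forces equality of the two eigenvalues, and the strong maximum principle applied to the first eigenfunction of the enlarged set then yields $B_r(x_0)\subset\{u_1>0\}$, so no free-boundary point has a one-component neighborhood and \eqref{almgren.bound} follows. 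An argument of this type, exploiting the partition functional rather than the sign of $u_i$, is what you need before invoking Corollary \ref{cor reg limit profile}; relatedly, your openness argument (``replacing $\omega_i$ by $\{u_i>0\}$ contradicts minimality'') is not a contradiction, since the replacement gives the \emph{same} value of $I$ --- the equivalence with open sets also comes out of the lemma just described.
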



To conclude this presentation, we mention that recently in \cite[Section 9]{sire.terracini.tortone} the authors considered some degenerate operator related to the local realization of fractional power of divergence form operator with Lipschitz leading coefficient. The techniques that they have introduced are quite robust, as they are based on subtle variants of the Almgren monotonicity formula that are close to ours. For this reason, we believe that our main results hold true for a much wider class of non-local operator.

Before presenting the proofs of our results, we make some final remarks about this subject. In the case of the standard diffusion, i.e.\ $s=1$, the analysis of the free-boundary has been the subject of an extensive study. We cite \cite{MR2393430, MR2984134} and the references therein as main contributions. More recently, in \cite{desilva.terracini} the authors studied the structure and regularity of the free-boundary of segregated configurations belonging to the class $\mathcal{G}^{1/2}(B^+_1)$. Our main result can be of interest in extending their analysis to the case $s \in (0,1)$.

The threshold $s=1/2$ in the definition of the exponent $\alpha^*$  is due to the phenomenon of self-segregation, which has not been excluded yet in the case $s\in (1/2,1)$. It consists in the possibility that the same density $u_i$ is locally present on the two sides of the free-boundary
\[
  \Gamma(\mathbf{u}) = \{X \in \partial^0 B^+_1 \colon \mathbf{u}(X)=\mathbf{0}\}.
\]
A more detailed discussion on this topic can be found in \cite{tvz2}, where a connection is drawn between this phenomenon and the fractional capacity of Riezs type (see also the last section of this paper). Excluding such phenomenon would directly imply a $C^{0,s}$ regularity result for the densities. This possibility is explored in the last section of this paper.

\medskip

\noindent\textbf{Structure of the paper.} The paper is organized as follows. In Section \ref{section.Almgren} we show Almgren's type monotonicity formulas for segregated critical configurations in $\mathcal{G}^s(B^+_1)$. In particular, we show a collection of monotonicity formulas emphasizing the differences between the local regularity of solution on the free-boundary and away from the free-boundary.
Section \ref{section.reg.limit} is devoted to the proof of the main result. Our strategy is based on the validity of a Morrey type inequality for degenerate and singular operator. Finally, in Section \ref{minimal.solution} we apply the results obtained to study a problem of optimal partition involving the eigenvalues of the fractional Laplacian of order $s\in (0,1)$. In this case, we are able to exclude the phenomenon of self-segregation and improve the regularity result to the optimal exponent $\alpha = s$.


\section{Almgren's type monotonicity formulas}\label{section.Almgren}
Functions belonging to $\mathcal{G}^s(B^+_1)$ have a very rich structure. Mainly thanks to the validity of the Poho\v{z}aev identities, we are able to prove some monotonicity formulas of Almgren type. These formulas will be crucial in proving of the regularity result.

The solutions have different local behaviors, according to the location of the point around which we analyze the functions: free-boundary points, points inside the support of the trace of one of the functions, points inside the upper half-spaces.  In this section, our main goal is to show a series of monotonicity formulas of Almgren's type in order to analyze all possible cases.

\begin{remark}
Throughout this section we will consider that $\mathbf{u} \not \equiv \mathbf{0}$ in $B_1^+$. Observe that for $y > 0$, each component of $\mathbf{u}$ verifies an elliptic equation that has locally smooth and strictly positive diffusion coefficient. Hence it follows that $\mathbf{u}$ is locally smooth (actually $C^\infty$) in $B_1^+$ and, by the standard unique continuation property \cite{garofalo.lin}, we also find that $\mathbf{u} \neq \mathbf{0}$ a.e. in $B_1^+$.
\end{remark}


\subsection{Monotonicity formulas on the free-boundary}

In this first part, we consider the case of points $X_0 \in \partial^0 B^+_1$ on the common nodal set $\Gamma(\mathbf{u})$, that is such that $\mathbf{u}(X_0) = \mathbf{0}$. We follow the main ideas of the last section of \cite{tvz1}, which is concerned with the regularity of profiles in $\mathcal{G}^{1/2}$. With respect to the mentioned paper, here we find some technical issues, spawned by the weight $y^a$, that have to be dealt with.

Let then $X_0 \in \partial^0 B^+_1$ with $\mathbf{u}(X_0) = \mathbf{0}$ and $r > 0$ such that $B^+_r(X_0)\subset B^+_1$, we define
\begin{equation}\label{E and H}
  \begin{split}
    E(r) = E(X_0,\mathbf{u},r) &= \frac{1}{r^{n-1+a}}\left(\int_{B^+_r(X_0)}{y^a \abs{\nabla \mathbf{u}}^2 \mathrm{d}X} - \int_{\partial^0B^+_r}{\langle \mathbf{u}, \mathbf{f}(\mathbf{u})\rangle \mathrm{d}x } \right)\\
    H(r) = H(X_0,\mathbf{u},r) &= \frac{1}{r^{n+a}}\int_{\partial^+B^+_r(X_0)}{y^a \mathbf{u}^2 \mathrm{d}\sigma}
  \end{split}
\end{equation}
and, whenever $H(x_0,\mathbf{u},r)\neq 0$, Almgren's frequency formula as
\begin{equation}\label{eqn N}
  N(r) = N(x_0,\mathbf{u},r)=\frac{E(x_0,\mathbf{u},r)}{H(x_0,\mathbf{u},r)}.
\end{equation}
We aim at showing that the previous frequency formula is monotone increasing in $r$, up to an explicit corrective term. To do this, we first need to ensure that the function $r \mapsto N(x_0,\mathbf{u},r)$ is well-defined. Then, we will prove some estimates for its derivative with respect to $r$.

\begin{lemma}
The functions $r\mapsto E(r)$ and $r\mapsto H(r)$ are well defined and locally absolutely continuous for any $0 < r < \dist{X_0}{\partial B^+_1}$.
\end{lemma}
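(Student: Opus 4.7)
The plan is to notice that the $r$-dependent prefactors $r^{-(n-1+a)}$ and $r^{-(n+a)}$ are smooth and nowhere vanishing on $(0,\dist{X_0}{\partial B^+_1})$, so the claim reduces to showing local absolute continuity of the three \emph{raw} integrals
\[
V(r)=\int_{B^+_r(X_0)} y^a\abs{\nabla \mathbf{u}}^2\,\mathrm{d}X,\quad T(r)=\int_{\partial^0 B^+_r(X_0)}\langle \mathbf{u},\mathbf{f}(\mathbf{u})\rangle\,\mathrm{d}x,\quad S(r)=\int_{\partial^+ B^+_r(X_0)} y^a \mathbf{u}^2\,\mathrm{d}\sigma.
\]

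For $V(r)$ and $T(r)$ I would apply the coarea formula directly. The density $y^a\abs{\nabla \mathbf{u}}^2$ is in $L^1_\loc(B^+_1)$ by the very definition of $H^{1,a}$, while $\langle \mathbf{u},\mathbf{f}(\mathbf{u})\rangle$ is continuous on $\partial^0 B^+_1$ thanks to property (1) of Definition \ref{class} and the regularity of the $f_i$. Coarea then realises each of $V$ and $T$ as the indefinite integral in $r$ of a locally integrable function, so both are locally absolutely continuous, and hence so is $E(r)$.

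For $S(r)$ the integration domain itself moves with $r$, so I would instead parametrise by the fixed unit upper hemisphere: setting $X=X_0+r\theta$ with $\theta\in\partial^+ B_1(0)$, one has $y=r\theta_{n+1}$ and $\mathrm{d}\sigma=r^n\mathrm{d}\sigma_1(\theta)$, whence
\[
H(r)=\frac{S(r)}{r^{n+a}}=\int_{\partial^+ B_1(0)} \theta_{n+1}^a\,\mathbf{u}(X_0+r\theta)^2\,\mathrm{d}\sigma_1(\theta).
\]
Since $a\in(-1,1)$ the weight $\theta_{n+1}^a$ is integrable over the hemisphere, and since $\mathbf{u}$ is both bounded and H\"older continuous on compact subsets of $\overline{B^+_1}$, the integrand $\theta_{n+1}^a\,\mathbf{u}(X_0+r\theta)^2$ is H\"older continuous in $r$ uniformly in $\theta$ on every compact subinterval of $(0,\dist{X_0}{\partial B^+_1})$. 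Integrating this uniform H\"older estimate against the fixed integrable weight yields local H\"older -- hence local absolute -- continuity of $H(r)$.

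The only mildly delicate point is the $y^a$-weight at the singular set $\{y=0\}$, but it is harmless in each of the three integrals: in $V$ it is absorbed into the locally integrable density $y^a\abs{\nabla\mathbf{u}}^2$, while in $S$, after the radial rescaling, it becomes the fixed integrable weight $\theta_{n+1}^a$ on the hemisphere. I therefore do not anticipate any genuine obstacle in the proof.
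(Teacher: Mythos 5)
Your treatment of $E(r)$ is fine and essentially the paper's: the prefactor is smooth and nonvanishing, and coarea/Fubini (plus the continuity of the trace guaranteed by Definition \ref{class}(1), or alternatively the trace inequality as the paper does) exhibits $V(r)$ and $T(r)$ as indefinite integrals of locally integrable functions of $r$, hence locally absolutely continuous.

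The argument for $H(r)$, however, has a genuine gap at its last step: ``local H\"older -- hence local absolute -- continuity'' is a false implication. A H\"older continuous function with exponent strictly less than $1$ need not be absolutely continuous (the Cantor--Lebesgue function is the standard counterexample), and here the available exponent is $\alpha<\alpha^*\le s<1$, so you cannot upgrade to Lipschitz. Your rescaling $H(r)=\int_{\partial^+B_1}\theta_{n+1}^a\,\abs{\mathbf{u}(X_0+r\theta)}^2\,\mathrm{d}\sigma_1(\theta)$ is correct, but from the H\"older continuity of $\mathbf{u}$ it only yields continuity (indeed H\"older continuity) of $H$, which is strictly weaker than what the lemma asserts and than what is needed later, where the identity $\tfrac{d}{dr}\log H(r)=\tfrac{2}{r}N(r)$ is integrated. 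The paper closes this gap by testing each equation in \eqref{eq} with $u_i$ and integrating by parts over $B_r^+(X_0)$, which gives
\[
E(r)=\frac{1}{r^{n-1+a}}\int_{\partial^+B^+_r}y^a\,\langle\mathbf{u},\partial_r\mathbf{u}\rangle\,\mathrm{d}\sigma=\frac{r}{2}\,\frac{d}{dr}H(r),
\]
so that $H(r_2)-H(r_1)=\int_{r_1}^{r_2}\tfrac{2}{t}E(t)\,\mathrm{d}t$ with $E$ already known to be continuous, exhibiting $H$ as an integral of a locally integrable function. Alternatively, you could repair your route by differentiating under the integral sign in the hemisphere parametrization: since $\mathbf{u}\in H^{1,a}$, the candidate derivative $2\int_{\partial^+B_1}\theta_{n+1}^a\langle\mathbf{u},\partial_r\mathbf{u}\rangle(X_0+r\theta)\,\mathrm{d}\sigma_1$ is locally integrable in $r$ by Cauchy--Schwarz and coarea, giving $H\in W^{1,1}_{\loc}$ and hence local absolute continuity; but some such quantitative control of $\partial_r\mathbf{u}$ is indispensable, and H\"older continuity of $\mathbf{u}$ alone cannot replace it.
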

\begin{proof}
The functions $r\mapsto E(r)$ and $r\mapsto H(r)$ are well defined since $\mathbf{u}\in H^{1,a}(B_1 )$. The absolute continuity of $r\mapsto E(r)$ follows directly by Fubini's theorem and the trace inequality for $H^{1,a}(B_1 )$ functions \cite[Theorem 2.11]{Nekvinda}. On the other hand, by multiplying each equation in \eqref{eq} by $u_i$, integrating by parts in $B_r^+(X_0)$ and summing for $i = 1,\dots,k$, we find the identity
\begin{equation}\label{E.H}
  E(r)= \frac{1}{r^{n-1+a}}\int_{\partial^+B^+_r}{y^a \langle\mathbf{u},\partial_r \mathbf{u}\rangle \mathrm{d}\sigma} = \frac{r}{2}\frac{d}{dr}H(r).
\end{equation}
This implies the local absolute continuity of the function $r\mapsto H(r)$.
\end{proof}

By the previous result, we find that also $r\mapsto N(r)$ is well defined and locally absolutely continuous for any $r$ such that $H(r) > 0$. Later (see Proposition \ref{mono}) we show that this is the case for any $r$ sufficiently small. This will entail the absolute continuity of $N$ for any $r > 0$ small.

We now consider the problem of estimating the derivative of $N(r)$ with respect to $r$, in order to show its monotonicity. To do this, we will need to control the terms in its derivative. We start with a Poincar\'e type inequality.

\begin{lemma}\label{lem poin}
Let $u\in H^{1,a}(B^+)$ and $p \in [2,p^\star]$, where $p^\star = 2n/(n-2s) = 2n/(n+a-1)$ is Sobolev's exponent for the fractional Laplacian. There exists a constant $C=C(n, p,a)$ such that
\begin{equation}\label{poincare}
  \left(\frac{1}{r^{n}}\int_{\partial^0 B^+_r}{\abs{u}^p\mathrm{d}x}\right)^{\frac{2}{p}} \leq C\left[ \frac{1}{r^{n-1+a}}\int_{B^+_r}{y^a\abs{\nabla u}^2\mathrm{d}X} + \frac{1}{r^{n+a}}\int_{\partial^+ B^+_r}{y^a |u|^2\mathrm{d}\sigma}\right]
\end{equation}
and
\begin{equation}\label{poincare2}
   \frac{1}{r^{n+a}}\int_{\partial^+ B^+_r}{y^a |u|^2\mathrm{d}\sigma} \leq C\left[ \frac{1}{r^{n-1+a}}\int_{B^+_r}{y^a\abs{\nabla u}^2\mathrm{d}X} + \left(\frac{1}{r^{n}}\int_{\partial^0 B^+_r}{\abs{u}^p\mathrm{d}x}\right)^{\frac{2}{p}}\right]
\end{equation}
for every $0 < r < 1$.
\end{lemma}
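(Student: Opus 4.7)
The plan is to reduce both inequalities to $r = 1$ by scaling. Setting $v(X) = u(rX)$ for $X \in B_1^+$ and applying the change of variables $Y = rX$ shows that each of the three functionals appearing in the statement is homogeneous of the same degree in $r$: this is precisely why the prefactors $r^{-n+1-a}$, $r^{-n-a}$, and $r^{-n}$ were chosen. Both sides of (\ref{poincare}) and (\ref{poincare2}) therefore transform identically under this dilation, so it suffices to prove both inequalities at $r = 1$, and the general case follows at once.

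For (\ref{poincare}) at $r = 1$, the plan is to combine three ingredients. The first is the trace theorem of \cite[Theorem 2.11]{Nekvinda}, which gives a continuous trace operator $H^{1,a}(B_1^+) \to H^s(\partial^0 B_1^+)$ with $s = (1-a)/2$. The second is the fractional Sobolev embedding $H^s(\partial^0 B_1^+) \hookrightarrow L^{p^\star}(\partial^0 B_1^+)$, with $p^\star = 2n/(n-2s) = 2n/(n-1+a)$, together with H\"older's inequality to cover any $p \in [2, p^\star]$. The third ingredient, needed to replace the full weighted $H^{1,a}$ norm on the right-hand side by gradient plus trace on $\partial^+ B_1^+$, is a weighted Poincar\'e inequality of the form
\[
\int_{B_1^+} y^a u^2\,dX \leq C\left(\int_{B_1^+} y^a|\nabla u|^2\,dX + \int_{\partial^+ B_1^+} y^a u^2\,d\sigma\right),
\]
which I would prove by a compactness argument: if it failed, a normalized sequence would converge weakly in $H^{1,a}(B_1^+)$ and strongly in $L^2_{y^a}(B_1^+)$ (using that $|y|^a$ is an $A_2$ Muckenhoupt weight, so the embedding $H^{1,a}(B_1^+)\hookrightarrow L^2_{y^a}(B_1^+)$ is compact, see \cite{FKS}) to a constant with vanishing $\partial^+$-trace, forcing the constant to be zero and contradicting the normalization.

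For (\ref{poincare2}) at $r=1$, I would argue again by contradiction and compactness. If the inequality failed there would exist $(u_n) \subset H^{1,a}(B_1^+)$ with $\int_{\partial^+ B_1^+} y^a u_n^2\,d\sigma = 1$, $\int_{B_1^+} y^a |\nabla u_n|^2\,dX \to 0$, and $\int_{\partial^0 B_1^+} |u_n|^p\,dx \to 0$. Denote by $\bar u_n$ the $y^a$-weighted mean of $u_n$ over $\partial^+ B_1^+$: Cauchy--Schwarz gives $|\bar u_n|^2 \leq 1/\int_{\partial^+ B_1^+} y^a d\sigma$, so up to a subsequence $\bar u_n \to c$. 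A weighted Poincar\'e--Wirtinger inequality (with boundary mean, again obtained by compactness in $H^{1,a}$) yields $u_n - \bar u_n \to 0$ in $L^2_{y^a}(B_1^+)$, hence $u_n \to c$ in $L^2_{y^a}(B_1^+)$. Continuity of the two trace operators then gives $u_n \to c$ in $L^2_{y^a}(\partial^+ B_1^+)$ and in $L^p(\partial^0 B_1^+)$; the two vanishing hypotheses force simultaneously $c = 0$ and $|c|^2\int_{\partial^+ B_1^+} y^a\,d\sigma = 1$, a contradiction.

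The main obstacle is the degeneracy/singularity of the weight $y^a$, in particular the need to control three different functional norms (gradient weighted by $y^a$, weighted trace on the curved part $\partial^+ B_1^+$, and unweighted $L^p$ trace on the flat part $\partial^0 B_1^+$) simultaneously and in a compatible way. Once one has at hand the compact embedding $H^{1,a}(B_1^+)\hookrightarrow L^2_{y^a}(B_1^+)$, continuity of the weighted trace on $\partial^+ B_1^+$, and the Nekvinda trace theorem into $H^s(\partial^0 B_1^+)$, both inequalities follow from standard compactness-and-embedding arguments; verifying that these $A_2$-weight tools apply smoothly across the codimension-two set $\{y = 0\}\cap \partial B_1$ is the point that requires most care.
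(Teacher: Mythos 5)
Your proposal is correct, and for the first inequality its core coincides with the paper's: the Nekvinda trace theorem onto $H^s(\partial^0 B^+_1)$ followed by the fractional Sobolev embedding (plus H\"older for $p<p^\star$). Where you diverge is in how the remaining estimates are obtained. The paper does not argue by compactness at all: for \eqref{poincare} it simply invokes trace plus Sobolev (leaving implicit the passage from the full $H^{1,a}$ norm to gradient plus spherical trace), and for \eqref{poincare2} it refers to a direct adaptation of the unweighted case \cite[Lemma 4.2]{MR2599456}, i.e.\ an integration-along-rays computation carried out with the weight $y^a$, which produces explicit constants and exhibits the $r$-dependence directly. You instead make the scale invariance explicit (your computation that all three normalized functionals are invariant under $u\mapsto u(r\cdot)$ is correct, and it is a clean way to get a constant uniform in $r$) and then prove the two auxiliary Poincar\'e-type inequalities at $r=1$ by contradiction--compactness. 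That route is sound, but note what it silently uses: the compactness of the embedding $H^{1,a}(B_1^+)\hookrightarrow L^2_{y^a}(B_1^+)$ (true for $A_2$ weights, but it is a weighted Rellich--Kondrachov statement that should be cited or derived from the Sobolev inequality of \cite{FKS}, not attributed to \cite{FKS} as stated), and the boundedness of the weighted trace $H^{1,a}(B_1^+)\to L^2_{y^a}(\partial^+ B_1^+)$, which you should justify, e.g.\ by integrating $\div(y^a u^2 X)$ over $B_1^+$ and using that $X\cdot\nu$ vanishes on $\partial^0 B_1^+$; with these two facts in place your weak-limit-is-a-nonzero-constant-with-zero-trace arguments close correctly. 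In short: same key mechanism for \eqref{poincare}, a softer (non-constructive) but valid compactness argument in place of the paper's direct computation for \eqref{poincare2}; the paper's route yields explicit constants, yours trades that for standard functional-analytic machinery made uniform in $r$ by scaling.
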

\begin{proof}
This result is a direct consequence of the characterization of the class of trace of $H^{1,a}(B^+_r)$, with $r\in (0,1)$, and the Sobolev embedding in the context of fractional Sobolev-Slobodeckij spaces.

For the first inequality \eqref{poincare}, by \cite[Theorem 2.11]{Nekvinda}, the traces of $H^{1,a}(B^+)$ function of the set $\partial^0B_r^+$ coincides with the Sobolev-Slobodeckij space $H^s(\partial^0B^+_r)$. This is defined as the set of all functions $v : \partial^0B_r^+ \to \R$ with a finite norm
\[
  \norm{v}{H^s(\partial^0B^+_r)} := \left(\int_{\partial^0B^+_r}{\abs{v}^2\mathrm{d}x} + \frac{C(n,s)}{2} \int_{\partial^0B^+_r}\int_{\partial^0B^+_r}{\frac{\abs{v(x)-v(z)}^2}{\abs{x-z}^{n+2s}}\mathrm{d}x\mathrm{d}z} \right)^{1/2},
\]
where the term
\begin{equation}\label{gagliardo}
  \left[v\right]_{H^s(\partial^0B^+_r)}=\left(\frac{C(n,s)}{2}\int_{\partial^0B^+_r}\int_{\partial^0B^+_r}{\frac{\abs{v(x)-v(z)}^2}{\abs{x-z}^{n+2s}}\mathrm{d}x\mathrm{d}z} \right)^{1/2}
\end{equation}
is the Gagliardo seminorm of $v$ in $H^s(\partial^0B^+_r)$. Since $\partial^0 B^+_r$ is a Lipschitz domain with bounded boundary, the fractional Sobolev inequality states that
\[
  \norm{v}{L^p(\partial^0 B^+_r)} \leq C \norm{v}{H^s(\partial^0B^+_r)},
\]
for every $p \in [2,p^\star]$, where $p^\star = 2n/(n-2s) = 2n/(n+a-1)$.

For the second inequality \eqref{poincare2}, we can show the result following the same steps of the more classical case $a=0$ (see for instance \cite[Lemma 4.2]{MR2599456}).
\end{proof}

We can use the previous result to prove two useful estimates for the functions $E$ and $H$. We have
\begin{lemma}\label{lemma1}
   For any $p \in [2,p^\star]$, there exist constants $C>0$ and $\overline{r}>0$, such that for every $X_0\in \partial^0 B^+_1$ and $0 < r < \min(\overline{r}, \dist{X_0}{\partial B^+})$, we have
  \[
    \left[ \frac{1}{r^{n}}\int_{\partial^0 B^+_r(X_0)}{\abs{\mathbf{u}}^p\mathrm{d}x}\right]^{\frac{2}{p}} \leq C\left(E(r)+H(r)\right)
  \]
and
\[
	\frac{1}{r^{n-1+a}} \int_{B^+_r(X_0)}{y^a\abs{\nabla \mathbf{u}}^2\mathrm{d}X} + \frac{1}{r^{n+a}}\int_{\partial^+ B^+_r(X_0)}{y^a \mathbf{u}^2\mathrm{d}\sigma} \leq C\left(E(r)+H(r)\right).
\]
\end{lemma}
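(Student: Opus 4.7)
The plan is to view $E(r) + H(r)$ as equivalent to the ``natural'' quantity $\mathcal{D}(r) := \frac{1}{r^{n-1+a}}\int_{B_r^+(X_0)} y^a|\nabla\mathbf{u}|^2\,\mathrm{d}X + H(r)$, and then reduce both inequalities to \eqref{poincare} from Lemma \ref{lem poin}. The only obstruction to this equivalence is the boundary term $\frac{1}{r^{n-1+a}}\int_{\partial^0 B_r^+(X_0)}\langle \mathbf{u},\mathbf{f}(\mathbf{u})\rangle\,\mathrm{d}x$ that is subtracted in the definition of $E(r)$. The whole proof amounts to showing that, for $r$ small, this term is a small (controlled) fraction of $\mathcal{D}(r)$, so that it can be reabsorbed on the left-hand side.

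The pointwise control is elementary: since each $f_i$ is $C^{1,\tau}$ with $f_i(0)=0$ and $\mathbf{u}$ is bounded, the mean value theorem yields a uniform constant $L = L(M,\mathbf{f})$ with $|f_i(t)|\leq L|t|$ on $[-M,M]$, and therefore
\[
\left| \langle \mathbf{u}, \mathbf{f}(\mathbf{u}) \rangle \right| \leq L \,|\mathbf{u}|^2 \quad \text{on } \partial^0 B_r^+(X_0).
\]
Integrating and rescaling, one gets
\[
\frac{1}{r^{n-1+a}} \int_{\partial^0 B^+_r(X_0)} |\langle \mathbf{u}, \mathbf{f}(\mathbf{u}) \rangle|\,\mathrm{d}x \leq L\,r^{1-a}\cdot \frac{1}{r^{n}} \int_{\partial^0 B^+_r(X_0)} |\mathbf{u}|^2\,\mathrm{d}x.
\]
The crucial observation is that the exponent $1-a = 2s$ is strictly positive since $s\in (0,1)$, so that the factor $r^{1-a}$ goes to zero as $r\to 0^+$.

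Now I apply \eqref{poincare} with $p=2$ to the right-hand side above, obtaining a bound by $C\,r^{1-a}\,\mathcal{D}(r)$. Recalling that
\[
\mathcal{D}(r) - H(r) = E(r) + \frac{1}{r^{n-1+a}} \int_{\partial^0 B^+_r(X_0)} \langle \mathbf{u}, \mathbf{f}(\mathbf{u}) \rangle\,\mathrm{d}x,
\]
choosing $\overline{r}>0$ so small that $C\,\overline{r}^{\,1-a}\leq 1/2$, I get for all $r<\overline{r}$
\[
\mathcal{D}(r) - H(r) \leq E(r) + \tfrac{1}{2}\mathcal{D}(r),
\]
which rearranges to $\mathcal{D}(r) \leq 2E(r) + 2H(r)$. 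This is exactly the second inequality of the lemma.

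For the first inequality, I invoke \eqref{poincare} with the general exponent $p\in[2,p^\star]$, which gives
\[
\left[\frac{1}{r^{n}}\int_{\partial^0 B^+_r(X_0)}|\mathbf{u}|^p\,\mathrm{d}x\right]^{2/p} \leq C\,\mathcal{D}(r),
\]
and then the already-proved bound $\mathcal{D}(r)\leq C(E(r)+H(r))$ finishes the proof. The only subtle point is the uniform choice of $\overline{r}$: it only depends on the $L^\infty$ bound $M$, on the $C^1$-seminorm of $\mathbf{f}$ on $[-M,M]$, and on the constants of Lemma \ref{lem poin} (which depend only on $n,a$), hence is independent of the base point $X_0\in \partial^0 B_1^+$.
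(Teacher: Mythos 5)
Your proof is correct and follows essentially the same route as the paper: bound the potential term $\frac{1}{r^{n-1+a}}\int_{\partial^0 B_r^+}\langle\mathbf{u},\mathbf{f}(\mathbf{u})\rangle\,\mathrm{d}x$ by $C r^{1-a}$ times the Dirichlet-plus-boundary quantity using the local Lipschitz bound $|f_i(t)|\leq L|t|$ and the Poincar\'e inequality \eqref{poincare} with $p=2$, absorb it for $r$ small, and then conclude the first estimate via \eqref{poincare} with general $p\in[2,p^\star]$. The uniformity of $\overline{r}$ in $X_0$ is handled exactly as in the paper.
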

\begin{proof}
  We prove explicitly the first estimate, as the proof of the second one is already contained in it. Recall that $\mathbf{f}$ is locally Lipschitz continuous with $\mathbf{f}(0) = 0$ and $\mathbf{u} \in L^\infty(B^+)$. By Poincar\'e's inequality \eqref{poincare} with $p = 2$, we obtain
  \begin{align*}
  \abs{\frac{1}{r^{n-1+a}}\int_{\partial^0 B^+_r}{\langle\mathbf{u},\mathbf{f}(\mathbf{u})\rangle\mathrm{d}x}}& \leq \frac{1}{r^{n-1+a}}\int_{\partial^0 B^+_r} \left|\mathbf{u}\right| \left|\mathbf{f}(\mathbf{u}) - \mathbf{f}(\mathbf{0}) \right| \mathrm{d}x \leq \frac{C}{r^{n-1+a}}\int_{\partial^0 B^+_r}{\mathbf{u}^2\mathrm{d}x}\\
  &\leq C_2 r^{1-a}\left[\frac{1}{r^{n-1+a}}\int_{B^+_r(X_0)}{y^a\abs{\nabla \mathbf{u}}^2\mathrm{d}X} + \frac{1}{r^{n+a}}\int_{\partial^+ B^+_r(X_0)}{y^a \mathbf{u}^2\mathrm{d}X}\right]
  \end{align*}
  for some constant $C_2 > 0$ that depends on the Lipschitz constant of $\mathbf{f}$ in $[-\|\mathbf{u}\|_{L^\infty}, \|\mathbf{u}\|_{L^\infty}]$. Since by assumption we have $1-a > 0$, there exists $\bar r > 0$ such that $0 \leq C_2 r^{1-a} < 1/2$ for any $r \in [0,\bar r]$. Thus, thanks to the previous estimates, we find the inequality
  \begin{equation}\label{tipotav}
  E(X_0,\mathbf{u},r) + H(X_0,\mathbf{u},r) \geq \frac12 \left[\frac{1}{r^{n-1+a}}\int_{B^+_r(X_0)}{y^a\abs{\nabla \mathbf{u}}^2\mathrm{d}X} + \frac{1}{r^{n+a}}\int_{\partial^+ B^+_r(X_0)}{y^a \mathbf{u}^2\mathrm{d}\sigma}\right],
  \end{equation}
  and we can conclude the proof by Poincar\'e's inequality with $p \in [2,p^\star]$.
\end{proof}

We now introduce two auxiliary functions. We recall that $\tau > 0$ (for simplicity, we assume $\tau < 2/(n-1)$) is the exponent of regularity of the functions $f_i \in C^{1,\tau}$ (see Definition \ref{class}). Let
\[
  \psi(r) = \psi(X_0,\mathbf{u},r) =  r \left(\frac{1}{r^n}\int_{\partial^0 B^+_r(X_0)}{\abs{\mathbf{u}}^{2+\tau}\mathrm{d}X}\right)^{\frac{\tau}{2+\tau}}
\]
and
\[
  \Psi(r) = \Psi(X_0,\mathbf{u},r) = \int_0^r{t^{-a}\left(1 + \psi'(t) \right)\mathrm{d}t}.
\]

\begin{lemma}\label{lem estimates phi psi}
The functions $r\mapsto\psi(r)$ and $r\mapsto\Psi(r)$ are well defined and absolutely continuous for $r\in (0,\mathrm{dist}(X_0,\partial B^+_1))$. Moreover, there exists a constant $C = C(a, \|\mathbf{u}\|_{L^\infty}) > 0$, such that for any $X_0 \in \partial^0B^+$ and $0 < r < \dist{X_0}{\partial B^+}$ we have
\[
	0 \leq \psi(r) \leq C r \qquad \text{and} \qquad 0 \leq \Psi(r) \leq C r^{1-a}.
\]
\end{lemma}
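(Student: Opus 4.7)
The plan is to control $\psi$ from above using only the boundedness of $\mathbf{u}$, and then exploit the resulting linear bound $\psi(t)\leq Ct$ together with the monotonicity of $\psi$ to tame the (possibly singular) weight $t^{-a}$ appearing in the definition of $\Psi$.

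First I would prove $\psi(r)\leq C r$ by the direct estimate $\int_{\partial^0 B^+_r(X_0)}\abs{\mathbf{u}}^{2+\tau}\,\mathrm{d}x \leq \norm{\mathbf{u}}{L^\infty}^{2+\tau}\omega_n r^n$, which after raising to the exponent $\tau/(2+\tau)$ gives the asserted linear growth. For the monotonicity, I would write $\psi(r) = r^{p}h(r)^{q}$ with $h(r) = \int_{\partial^0 B^+_r(X_0)}\abs{\mathbf{u}}^{2+\tau}\,\mathrm{d}x$, $p = 1-n\tau/(2+\tau)$ and $q = \tau/(2+\tau)$. The exponent $p$ is strictly positive by the standing assumption $\tau<2/(n-1)$, while $q\in(0,1)$; both $r\mapsto r^p$ and $r\mapsto h(r)^q$ are non-negative and non-decreasing, the latter because $h$ is Lipschitz (hence absolutely continuous) and non-decreasing as the integral of a bounded trace over expanding flat balls. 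Hence $\psi$ itself is non-decreasing, which implies that $\psi'$ exists almost everywhere with $\psi'\geq 0$ and $\int_0^r\psi'(t)\,\mathrm{d}t\leq \psi(r)$.

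Next I would establish the integrability of $t^{-a}(1+\psi'(t))$ on $(0,r)$ and the bound $\Psi(r)\leq C r^{1-a}$. Splitting $\Psi$ in two, the piece $\int_0^r t^{-a}\,\mathrm{d}t$ equals $r^{1-a}/(1-a)$, which is finite since $a<1$. For the second piece, when $a\leq 0$ the crude inequality $t^{-a}\leq r^{-a}$ immediately yields $\int_0^r t^{-a}\psi'(t)\,\mathrm{d}t\leq r^{-a}\psi(r)\leq C r^{1-a}$. When $0<a<1$ the weight is singular at the origin, and I would perform a Stieltjes integration by parts on $[\varepsilon,r]$:
\[
\int_\varepsilon^r t^{-a}\,\mathrm{d}\psi(t) = r^{-a}\psi(r) - \varepsilon^{-a}\psi(\varepsilon) + a\int_\varepsilon^r t^{-a-1}\psi(t)\,\mathrm{d}t.
\]
Plugging in $\psi(t)\leq C t$ kills the left endpoint (its contribution is $O(\varepsilon^{1-a})\to 0$) and bounds the last term by $aC r^{1-a}/(1-a)$. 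The one-sided inequality $\psi'(t)\,\mathrm{d}t\leq \mathrm{d}\psi(t)$, valid because $\psi$ is monotone, then transfers the estimate to $\int_0^r t^{-a}\psi'(t)\,\mathrm{d}t$. The absolute continuity of $\Psi$ follows automatically, since it is the indefinite Lebesgue integral of a non-negative $L^1(0,r)$ function.

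The main obstacle I anticipate is the singular case $s\in(0,1/2)$, i.e.\ $0<a<1$, where $t^{-a}$ blows up at the origin and the pointwise bound $t^{-a}\leq r^{-a}$ is no longer available. The integration-by-parts argument reduces matters to controlling $\int_0^r t^{-a-1}\psi(t)\,\mathrm{d}t$, which converges precisely because $\psi$ vanishes \emph{linearly} at $0$; any slower rate of vanishing would be fatal. It is worth noting that one does not need the absolute continuity of $\psi$ in the strong sense—its monotonicity alone delivers the one-sided bound $\psi'(t)\,\mathrm{d}t\leq \mathrm{d}\psi(t)$ on which the key step relies.
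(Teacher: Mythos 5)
Your proof is correct and follows essentially the same route as the paper: the linear bound $\psi(r)\leq C\|\mathbf{u}\|_{L^\infty}^{\tau}\,r$ from the $L^\infty$ bound on $\mathbf{u}$, followed by an integration by parts that trades $t^{-a}\psi'(t)$ for $\left[t^{-a}\psi(t)\right]_0^r + a\int_0^r t^{-1-a}\psi(t)\,\mathrm{d}t$ and exploits the linear vanishing of $\psi$ at the origin. Your extra care (the Stieltjes formulation with $\psi'(t)\,\mathrm{d}t\leq\mathrm{d}\psi(t)$, the case split in $a$, and the explicit monotonicity check via $\tau<2/(n-1)$) only makes rigorous what the paper treats as a straightforward computation.
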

\begin{proof}
The proof follows by rather straightforward computations. First we have
\[
  0 \leq \psi(r) = r \left(\frac{1}{r^n}\int_{\partial^0 B^+_r(X_0)}{\abs{\mathbf{u}}^{2+\tau}\mathrm{d}X}\right)^{\frac{\tau}{2+\tau}} \leq C r \|\mathbf{u}\|_{L^\infty}^{\tau}.
\]
We also point out that the derivative of $\psi$ is positive. Then, concerning $\Psi$, we find
\[\begin{split}
	0 \leq \Psi(r) &= \int_0^r t^{-a}(1+\psi'(t)) \mathrm{d}t = \frac{r^{1-a}}{1-a} + \left[t^{-a}\psi(t)\right]_0^r + a \int_{0}^{r} t^{-1-a} \psi(t) \mathrm{d}t \\
	&\leq \frac{r^{1-a}}{1-a} + C r^{1-a} + |a| \int_{0}^{r} C t^{-a} \mathrm{d}t \leq C r^{1-a}. \qedhere
\end{split}
\]
\end{proof}
We can use the auxiliary functions in combination with Poincar\'e's inequality in order to bound uniformly the integral terms on sets of co-dimension 2. We have
\begin{lemma}\label{lemma2}
  There exist constants $C>0$ and $\overline{r}>0$ such that
  \[
  \frac{1}{r^{n-1}}\int_{S^{n-1}_r(X_0)}{|\mathbf{u}|^{2+\tau}\mathrm{d}\sigma} \leq C\left(E(r)+H(r)\right) \psi'(r),
  \]
  for every $X_0 \in \partial^0 B^+$ and $0 < r < \min(\overline{r}, \dist{X_0}{\partial B^+})$.
\end{lemma}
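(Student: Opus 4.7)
The plan is to exploit the co-area formula together with a clever cancellation of exponents built into the definition of $\psi$. Introduce
\[
  g(r) = \int_{\partial^0 B^+_r(X_0)} |\mathbf{u}|^{2+\tau}\,\mathrm{d}x,
\]
so that by the co-area formula $g'(r) = \int_{S^{n-1}_r(X_0)} |\mathbf{u}|^{2+\tau}\,\mathrm{d}\sigma$, which is exactly (up to the factor $r^{n-1}$) the left-hand side of the claimed inequality. Writing $\alpha = \tau/(2+\tau) \in (0,1)$, one has
\[
  \psi(r) = r^{1-n\alpha} g(r)^{\alpha},
  \qquad
  \psi'(r) = (1-n\alpha) r^{-n\alpha} g(r)^{\alpha} + \alpha r^{1-n\alpha} g(r)^{\alpha-1} g'(r).
\]
The assumption $\tau < 2/(n-1)$ guarantees $1 - n\alpha > 0$, and since $g$ is non-decreasing both summands are non-negative (as already noted in Lemma~\ref{lem estimates phi psi}). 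Thus
\[
  \psi'(r) \;\geq\; \alpha\, r^{1-n\alpha} g(r)^{\alpha-1} g'(r).
\]

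Next, apply Lemma~\ref{lemma1} with the admissible exponent $p = 2+\tau$ (choosing $\tau$ small enough so that $2+\tau \leq p^\star$, which is compatible with $\tau < 2/(n-1)$ for $\tau$ small). This yields
\[
  g(r)^{2/(2+\tau)} \;\leq\; C\, r^{2n/(2+\tau)} \bigl(E(r) + H(r)\bigr).
\]
Now I combine the two estimates: since $1 - \alpha = 2/(2+\tau)$, multiplying the last display by $r^{1-n\alpha}/(\alpha\, r^{n-1})$ and substituting into the lower bound for $\psi'(r)$ gives
\[
  \frac{g'(r)}{r^{n-1}} \;\leq\; \frac{\psi'(r)\, g(r)^{1-\alpha}}{\alpha\, r^{n-n\alpha}}
  \;\leq\; \frac{C}{\alpha}\, r^{\,2n/(2+\tau) + n\alpha - n}\, \bigl(E(r) + H(r)\bigr)\, \psi'(r).
\]
A direct computation shows $2n/(2+\tau) + n\alpha = n(2+\tau)/(2+\tau) = n$, so the power of $r$ collapses to $0$ and one obtains exactly the claimed bound.

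The only point that requires care is the possibility $g(r) = 0$, where the manipulation with $g^{\alpha-1}$ is formally singular; this is handled by observing that $g \equiv 0$ on some interval forces $g' = 0$ a.e.\ there, so the inequality is trivial. The threshold $\bar r$ is inherited from Lemma~\ref{lemma1}. The main obstacle is the exponent bookkeeping: the statement relies on the precise choice of the normalisation $r^{1 - n\tau/(2+\tau)}$ in $\psi$, tuned so that the powers of $r$ and $g$ cancel simultaneously; any other normalisation would produce a residual factor of $r^\delta$ or $g^\delta$ that would spoil either the uniformity in $r$ or the degeneracy on the nodal set.
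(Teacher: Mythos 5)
Your proof is correct and follows essentially the same route as the paper: differentiate $\psi$, discard the non-negative term coming from $1-n\tau/(2+\tau)>0$, and use Lemma \ref{lemma1} with $p=2+\tau$ so that the powers of $r$ and of $g(r)^{1-\alpha}$ cancel exactly. Your explicit remarks on choosing $\tau$ small enough that $2+\tau\leq p^\star$ and on the degenerate case $g(r)=0$ are minor refinements of details the paper leaves implicit.
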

\begin{proof}
A direct computation yields the identity
\[
  \psi'(r) = \frac{1}{r} \psi(r) \left( 1-n\frac{\tau}{2+\tau} + r \frac{\tau}{2+\tau}
  \frac{\displaystyle \int_{S^{n-1}_r}{\abs{\mathbf{u}}^{2+\tau}\mathrm{d}\sigma}}{\displaystyle \int_{\partial^0 B^+_r}{\abs{\mathbf{u}}^{2+\tau}\mathrm{d}\sigma}} \right).
\]
Since $0 < \tau < 2/(n-1)$, from Lemma \ref{lemma1} we deduce
\[
  \left(E(r)+H(r)\right) \psi'(r) \geq C \frac{1}{r^{n-1}}\int_{S^{n-1}_r}{\abs{\mathbf{u}}^{2+\tau}\mathrm{d}\sigma}.\qedhere
\]
\end{proof}

We are now ready to prove that Almgren's frequency quotient is monotone up to a correction term.

\begin{proposition}\label{mono}
There exist constants $C$ and $\overline{r}>0$ such that, for any $X_0 \in \Gamma(\mathbf{u})$ we have $H(r)>0$ and $N(r)>0$ for every $0 < r < \min(\overline{r}, \dist{X_0}{\partial B^+})$. Moreover, the map
\[
  r\mapsto e^{C\Psi(r)}\left(N(r) +1 \right)
\]
is monotone increasing. Moreover $H(r) > 0$ for all  $0 < r < \min(\overline{r}, \dist{X_0}{\partial B^+})$ and we have
\[
  \lim_{r\to 0^+} N(r) \geq \alpha^*.
\]
\end{proposition}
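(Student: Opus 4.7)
Since $H'(r) = 2E(r)/r$ is already contained in \eqref{E.H}, the heart of the computation is $E'(r)$. Differentiating the defining integrals,
\[
E'(r) = -\frac{n-1+a}{r}\,E(r) + \frac{1}{r^{n-1+a}}\left[\int_{\partial^+ B_r^+(X_0)} y^a |\nabla \mathbf{u}|^2 \,\mathrm{d}\sigma - \int_{S_r^{n-1}(X_0)} \langle \mathbf{u}, \mathbf{f}(\mathbf{u})\rangle\,\mathrm{d}\sigma\right],
\]
and I would replace $\int_{\partial^+ B_r^+} y^a |\nabla \mathbf{u}|^2$ by the expression extracted from the Poho\v{z}aev identity \eqref{pohoz}. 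After the $\int_{B_r^+} y^a |\nabla \mathbf{u}|^2$ contributions recombine and cancel, this produces
\[
E'(r) = \frac{2}{r^{n-1+a}} \int_{\partial^+ B_r^+(X_0)} y^a (\partial_r \mathbf{u})^2\,\mathrm{d}\sigma + R(r),
\]
where the remainder $R(r)$ collects the $\partial^0 B_r^+$-integral of $(n+a-1)\langle \mathbf{u},\mathbf{f}(\mathbf{u})\rangle - 2n \sum F_i(u_i)$ and the $S_r^{n-1}$-integral of $2\sum F_i(u_i) - \langle \mathbf{u}, \mathbf{f}(\mathbf{u})\rangle$. Cauchy--Schwarz applied to the identity in \eqref{E.H} then yields $\frac{2}{r^{n-1+a}} \int y^a (\partial_r \mathbf{u})^2 \geq 2 E(r)^2/(r\,H(r))$, whence
\[
E'(r) - \frac{2 E(r)^2}{r H(r)} \geq R(r).
\]

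\textbf{Controlling $R$ and monotonicity.} The algebraic identity $|u_i f_i(u_i) - 2 F_i(u_i)| \leq C|u_i|^{2+\tau}$, which follows from $f_i \in C^{1,\tau}$ with $f_i(0) = 0$, lets me rewrite the $\partial^0 B_r^+$-piece of $R$ as an $O(|\mathbf{u}|^{2+\tau})$ integrand plus a multiple of $\int \sum F_i(u_i)$; using $\mathbf{u} \in L^\infty$ both are bounded by $C r^{-n} \int_{\partial^0 B_r^+} |\mathbf{u}|^2$, and Lemma \ref{lemma1} with $p = 2$ together with \eqref{tipotav} yields a bound of the form $C r^{-a}(E(r) + H(r))$. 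The $S_r^{n-1}$-piece is, by the same cancellation, already of $O(|\mathbf{u}|^{2+\tau})$ type, and Lemma \ref{lemma2} controls it by $C r^{-a} \psi'(r) (E(r) + H(r))$. Since $\Psi'(r) = r^{-a}(1+\psi'(r))$ dominates both contributions, $R(r) \geq -C \Psi'(r)(E(r) + H(r))$. Dividing the combined inequality by $H(r) > 0$ and using $E/H = N$ produces $N'(r) \geq -C\Psi'(r)(N(r)+1)$, equivalent to $\frac{d}{dr}\bigl[e^{C\Psi(r)}(N(r)+1)\bigr] \geq 0$.

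\textbf{Positivity of $H$ and identification of $N(0^+)$.} The coercivity bound \eqref{tipotav} gives $E(r) \geq -H(r)/2$, so $N(r) \geq -1/2$ wherever $H(r)>0$, and therefore $(r H(r))' = (2 N(r)+1)H(r) \geq 0$ on each connected component of $\{H>0\}$. Since $\mathbf{u} \not\equiv \mathbf{0}$ in $B_1^+$, a coarea argument combined with unique continuation forces $H(r)>0$ for a.e.\ $r \in (0, \overline{r})$; the monotonicity of $rH(r)$ then rules out interior zeros of $H$ and delivers $H > 0$ on the entire interval $(0, \overline{r})$ (after possibly shrinking $\overline{r}$). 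The monotone quantity then admits a limit as $r \to 0^+$, which since $\Psi(r) \to 0$ identifies $\lim_{r \to 0^+} N(r) = N(0^+) \in [-1/2,+\infty)$. To identify this limit with $\alpha^*$ I would run a blow-up argument: the rescalings $\mathbf{v}_r(X) = \mathbf{u}(X_0+rX)/\sqrt{H(r)}$ satisfy $H(\mathbf{v}_r,\mathbf{0},1) = 1$ and $N(\mathbf{v}_r,\mathbf{0},\varrho) = N(\mathbf{u},X_0,r\varrho)$, while the rescaled nonlinearities $\tilde f_{r,i}(t) = r^{1-a} f_i(\sqrt{H(r)}\,t)/\sqrt{H(r)}$ vanish locally uniformly as $r \to 0$ (because $1-a > 0$ and $f_i \in C^{1,\tau}$ with $f_i(0) = 0$). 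Any weak $H^{1,a}_\loc$-accumulation point $\mathbf{v}_0$ thus belongs to $\mathcal{G}^s(\R^{n+1}_+)$ with $f_i \equiv 0$, has constant Almgren frequency $N(0^+)$, and is hence homogeneous of that degree; the Liouville-type classification of homogeneous segregated profiles from \cite{tvz1,tvz2,aletesi,tesi} forces $N(0^+) \geq \alpha^*$. Finally $N(r) + 1 \geq e^{-C\Psi(r)}(\alpha^*+1) > 1$ for small $r$, giving $N(r) > 0$. The principal obstacle is this Liouville-type classification of homogeneous profiles; the subsidiary technical difficulty is the bookkeeping that forces all $C^{1,\tau}$-error terms to fit inside the corrector $\Psi'$, which is precisely the reason for introducing the auxiliary functions $\psi$ and $\Psi$.
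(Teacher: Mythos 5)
Your derivation of the differential inequality is the same as the paper's: differentiate $E$ using the Poho\v{z}aev identity \eqref{pohoz}, absorb the $C^{1,\tau}$-error terms through $\left| u_if_i(u_i)-2F_i(u_i)\right|\leq C|u_i|^{2+\tau}$, control them with Lemmas \ref{lemma1} and \ref{lemma2} by $Cr^{-a}(1+\psi'(r))(E+H)$, and conclude monotonicity of $e^{C\Psi(r)}(N(r)+1)$ by Cauchy--Schwarz. Your argument for $H>0$ (monotonicity of $rH(r)$ from $E\geq -H/2$ plus a.e.\ positivity via unique continuation) is a legitimate variant of the paper's route, which instead integrates $\frac{d}{dr}\log H=\frac{2}{r}N$ using the upper bound on $N$ supplied by the monotone quantity.

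The genuine gap is the lower bound $\lim_{r\to 0^+}N(r)\geq\alpha^*$, which is the only nontrivial conclusion of the statement beyond monotonicity, and which you leave to a blow-up plus a ``Liouville-type classification of homogeneous segregated profiles'' that you cite but do not prove. As written this is incomplete on several counts: you would need uniform bounds and strong $H^{1,a}_{\loc}$ convergence of the rescalings $\mathbf{v}_r$ to pass the segregation condition and the Poho\v{z}aev identities (hence membership in $\mathcal{G}^s$) to the limit, constancy of the frequency and homogeneity of the limit profile, and finally the classification itself, which is a substantial result of \cite{tvz1,tvz2,aletesi,tesi} rather than something available inside this paper. More importantly, this machinery is unnecessary here: the class $\mathcal{G}^s$ already carries, by Definition \ref{class}(1), the quasi-optimal regularity $\mathbf{u}\in C^{0,\alpha}_{\loc}$ for every $\alpha<\alpha^*$, and the paper exploits exactly this. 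If $e^{C\Psi(0)}(N(0^+)+1)=\alpha^*+1-\eps$, then $N(r)\leq\alpha^*-\eps/2$ for small $r$, and integrating \eqref{int} gives $H(r)\geq c\,r^{2\alpha^*-\eps}$; on the other hand $\mathbf{u}(X_0)=\mathbf{0}$ and the $C^{0,\alpha}$ bound give $H(r)\leq C_\alpha r^{2\alpha}$, which is contradictory for $\alpha$ close to $\alpha^*$ and $r$ small. This two-line comparison replaces your ``principal obstacle''. A minor further point: the proposition asserts that $\overline{r}$ can be chosen uniformly in $X_0\in\Gamma(\mathbf{u})$ (and that $N(r)>0$ on the whole interval), which the paper obtains from the explicit bound $N(r)\geq(\alpha^*+1)e^{-C\Psi(r)}-1$ with constants depending only on $a$ and $\|\mathbf{u}\|_{L^\infty}$; your sketch should make this uniformity explicit as well.
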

\begin{proof}
First, we show the monotonicity of the following modified Almgren frequency formula
\begin{equation}\label{new.almgren}
\widetilde{N}(r) = \frac{E(r)}{H(r)}+ 1 = N(r)+1
\end{equation}
in a suitable open interval $(r_1,r_2)$. Observe that Lemma \ref{lemma1} yields
\[
  E(r)+ H(r) \geq 0 \implies \widetilde{N}(r) = \frac{E(r)}{H(r)}+ 1\geq 0,
\]
whenever $H(r) \neq 0$ and $r>0$ is small enough. Since we are considering $\mathbf{u} \not \equiv \mathbf{0}$, by continuity of the function $r\mapsto H(r)$ we can consider a open interval $(r_1,r_2)$ where $H(r)$ does not vanish. Recalling that $\mathbf{u}\in L^\infty(B^+_1)$, and each components of $\mathbf{f}=(f_1,\dots,f_k)$ is locally Lipschitz continuous with $f_i(0) = 0$, there exists a positive constant $C>0$ such that
\[
\abs{\langle \mathbf{u},\mathbf{f}(\mathbf{u})\rangle } \leq C \mathbf{u}^2\quad\mbox{and}\quad \abs{\mathbf{F}(\mathbf{u})} \leq C \mathbf{u}^2,
\]
for every $i=1,\dots,k$. Now, taking into account the Poho\v{z}aev identity \eqref{pohoz}, if we differentiate the function $r\mapsto E(r)$ we obtain
\begin{align*}
\frac{d}{dr}E(r)=& -\frac{n-1+a}{r^{n+a}}\bigg(\int_{B^+_r}{y^a\abs{\nabla \mathbf{u}}^2 \mathrm{d}X} -\int_{\partial^0B^+_r}{\langle \mathbf{u}, \mathbf{f}(\mathbf{u})\rangle \mathrm{d}x} \bigg) \\ &+\frac{1}{r^{n-1+a}}\int_{\partial^+B^+_r}{y^a\abs{\nabla \mathbf{u}}^2 \mathrm{d}\sigma}- \frac{1}{r^{n-1+a}}\int_{S^{N-1}_r}{\langle \mathbf{u}, \mathbf{f}(\mathbf{u})\rangle \mathrm{d}\sigma}\\
=&\,\, \frac{2}{r^{n-1+a}}\int_{\partial^+B^+_r}{y^a \abs{\partial_r \mathbf{u}}^2\mathrm{d}\sigma} +R(r).
\end{align*}
In order to estimate the remainder we need to exploit the regularity of the functions $\mathbf{f}$. Since $\mathbf{f} \in C^{1,\tau}$, we have that there exists $C > 0$ such that
\[
  \left| 2F_i(s) - sf_i(s) \right| \leq C |s|^{1+\tau}
\]
for all $s \in [-\|\mathbf{u}\|_{L^\infty}, \|\mathbf{u}\|_{L^\infty}]$. Hence, we obtain
\begin{align*}
\abs{R(r)} \leq& \,\,\frac{n-1+a}{r^{n+a}}\int_{\partial^0B^+_r(X_0)}{\abs{\langle\mathbf{u},\mathbf{f}(\mathbf{u})\rangle} \mathrm{d}x} + \frac{2n}{r^{n+a}}\int_{\partial^0 B_r^+(X_0)}{\sum_{i=1}^k\abs{ F_i(u_i)}\mathrm{d}x}\\
& +\frac{1}{r^{n+a-1}} \int_{S^{n-1}_r(X_0)}{\sum_{i=1}^k \abs{ 2F_i(u_i) - u_if_i(u_i) }\mathrm{d}x}\\
\leq & \, C \left[\frac{1}{r^{n+a}}\int_{\partial^0B^+_r(X_0)}{\mathbf{u}^2\mathrm{d}x}+\frac{1}{r^{n+a-1}}\int_{S^{n-1}_r(X_0)}{ \mathbf{u}^{2+\tau}\mathrm{d}\sigma}\right]\\
\leq & \, C r^{-a}\left(E(r)+H(r)\right)\left(1+\psi'(r)\right)
\end{align*}
where in the last estimate we made use of Lemma \ref{lemma1} and Lemma \ref{lemma2}. Therefore, differentiating the modified Almgren quotient and using the Cauchy-Schwarz inequality on $\partial^+ B^+_r$, we obtain
\begin{align*}
  \frac{d}{dr}\widetilde{N}(r) =&\, \ddfrac{\ddfrac{d}{dr}E(r)+\ddfrac{d}{dr}H(r)}{E(r)+H(r)} - \ddfrac{\ddfrac{d}{dr}H(r)}{H(r)} \\
\geq&\, \frac{2 H(r)}{r^{2n+2a-1}}\left[\int_{\partial^+B^+_r}{y^a \abs{\partial_r \mathbf{u}}^2\mathrm{d}\sigma}\int_{\partial^+B^+_r}{y^a\mathbf{u}^2\mathrm{d}\sigma} - \left(\int_{\partial^+ B^+_r}{y^a \langle\mathbf{u},\partial_r \mathbf{u}\rangle\mathrm{d}\sigma}\right)^2\right]\\
&\,-C \widetilde{N}(r) r^{-a}\left(1+ \psi'(r)\right)\\
\geq&\,-C \widetilde{N}(r) r^{-a}\left(1+ \psi'(r)\right).
\end{align*}
As a result, we find that the function
\begin{equation}\label{eqn mono tilde N}
  r\mapsto e^{C\Psi(r)}\widetilde{N}(r) = e^{C\Psi(r)}( N(r) + 1)
\end{equation}
is absolutely continuous and increasing for $r \in (r_1,r_2)$.

We now show that the function $H(r)$ is always strictly positive in the interval $(0,r_2)$, thanks to the monotonicity of the modified Almgren quotient. We start by taking the derivative of the logarithm of $r\mapsto H(r)$ in the open interval $(r_1,r_2)$. From \eqref{E.H}, we find that, for $r\in (r_1,r_2)$,
\begin{equation}\label{int}
\frac{d}{dr}\log H(r)=\frac{2}{r}N(r).
\end{equation}
By the monotonicity of the modified Almgren quotient, we have that
\[
  N(r) \leq e^{C\left[\Psi( r_2)-\Psi(r)\right]}\left(N(r_2)+1\right)-1 \leq e^{C\Psi(r_2)}\left(N(r_2)+1\right)-1 =: M
\]
where $M > 0$. Substituting this estimate in \eqref{int} and integrating the resulting inequality in $r$, we obtain
\[
  \frac{H(r_2)}{H(r)}\leq \left(\frac{r_2}{r}\right)^{2M} \qquad \text{that is} \qquad H(r) \geq H(r_2) \left(\frac{r}{r_2}\right)^{2M} > 0
\]
which implies that $H(r) > 0$ for any $r \in (0,r_2)$. As a result, the modified Almgren quotient is defined for all $r \in (0,r_2)$, and it can be extended for $r = 0$ by taking its limit for $r \to 0^+$.

Next we prove that the function in \eqref{eqn mono tilde N} has a positive strict minimum. More precisely, we show that
\begin{equation}\label{eqn lower bound N}
   e^{C\Psi(r)}( N(r) + 1) \geq e^{C\Psi(0)}( N(0) + 1) \geq \alpha^* +1
\end{equation}
for any $r \in (0,r_2)$. We reason by virtue of a contradiction. Assume that there exists $0 < \eps \leq \alpha^* +1$ such that
\[
  e^{C\Psi(0)}( N(0) + 1) = \alpha^* +1 - \eps.
\]
We recall that $\Psi(0) = 0$ and that $r \mapsto \Psi(r)$ is a non-negative and continuous function. Thus, by monotonicity of the modified Almgren quotient, we find that there exists $\hat r > 0$ such that
\[
  N(r) \leq \alpha^* - \frac{\eps}{2} \qquad \text{for all $r \in [0,\hat r]$}.
\]
We can go back to the identity \eqref{int} and integrate it over $(r, \hat r)$ to find
\[
  \frac{H(\hat r)}{H(r)} \leq \left(\frac{\hat r }{r}\right)^{2\alpha^* - \eps} \qquad \text{for all $r \in [0,\hat r]$}.
\]
Now, since by assumption $\mathbf{u}\in C^{0,\alpha}_\loc (B^+_1)$ for every $\alpha\in (0,\alpha^*)$ and $\mathbf{u}(X_0)=\mathbf{0}$, we find that
\[
  H(r) = \frac{1}{r^{n+a}}\int_{\partial^+B^+_r(X_0)}{y^a |\mathbf{u}|^2 \mathrm{d}\sigma} = \frac{1}{r^{n+a}}\int_{\partial^+B^+_r(X_0)}{y^a |\mathbf{u}-\mathbf{0}|^2 \mathrm{d}\sigma} \leq C_\alpha r^{2\alpha}
\]
for any $0 < r < \dist{X_0}{\partial B^+}$. Combining the two estimates, we obtain
\[
  H(\hat r) \hat r^{\eps - 2\alpha^*} \cdot r^{2\alpha^*-\eps} \leq H(r) \leq C_\alpha r^{2\alpha},
\]
for every $\alpha \in (0,\alpha^*)$. Hence, the contradiction follows choosing for $r$ sufficiently small.

Finally, we show that the threshold $r_2 = \min(\overline{r}, \dist{X_0}{\partial B^+})$ where $\bar r$ can be chosen independently of $X_0$. We consider \eqref{eqn lower bound N}, which we rewrite as
\[
  N(r) \geq (\alpha^* + 1)e^{-C\Psi(r)} - 1.
\]
Let $\eps>0$ be a small fixed constant. By Lemma \ref{lem estimates phi psi} we find that there exists $\bar r >0$ that depends only on $\eps$ and $\|\mathbf{u}\|_{L^\infty}$ such that
\[
  (\alpha^* + 1)e^{-C\Psi(r)} - 1 \geq \eps \qquad \text{for all $r \in (0,\bar r)$}.
\]
Indeed, it suffices to take $\bar r$ smaller than the radius in Lemma \ref{lemma1} and
\[
  \rho := \left[C\log \left(\frac{1+\alpha^*}{1+\eps}\right)\right]^{\frac{1}{1-a}}
\]
for some constant $C = C(\|\mathbf{u}\|_{L^\infty})$. Thus we find
\[
  N(r) \geq \eps \qquad \text{for all $0 < r < \min(\overline{r}, \dist{X_0}{\partial B^+})$}.
\]
Plugging this estimate in \eqref{int} and integrating in $(r,R)$, we find
\[
  H(R) \geq H(r) \left(\frac{R}{r}\right)^{2\eps}
\]
which implies that $H(R) > 0$ for all $0 < R < \min(\overline{r}, \dist{X_0}{\partial B^+})$.
\end{proof}

We conclude by showing an upper bound for the suitable local energy of the solutions.
\begin{corollary}\label{upper E alpha u}
Under the same assumptions of Proposition \ref{mono}, there exist constants $C > 0$ and $\bar r>0$  such that
\[
	\frac{1}{r^{n-1+a+2\alpha^*}} \int_{B_r^+(X_0)}{y^a \abs{\nabla \mathbf{u}}^2 \mathrm{d}X} + \frac{1}{r^{n+a+2\alpha^*}} \int_{\partial^+ B_r^+(X_0)}{y^a \abs{\mathbf{u}}^2 \mathrm{d}\sigma} \leq C \frac{E(R) + H(R)}{R^{2\alpha^*}}
\]
for all $X_0 \in \Gamma(\mathbf{u})$ and $0 < r < R = \min(\bar r, \dist{X_0}{\partial^+ B^+})$.
\end{corollary}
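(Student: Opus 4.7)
The plan is to combine the second inequality of Lemma \ref{lemma1} with the monotonicity of the (modified) Almgren quotient from Proposition \ref{mono} to reduce everything to a comparison of the quantity $E+H$ at scales $r$ and $R$.

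First, by the second estimate of Lemma \ref{lemma1}, the left-hand side is bounded by
\[
\frac{C(E(r)+H(r))}{r^{2\alpha^*}},
\]
so it suffices to prove that $r \mapsto (E(r)+H(r))/r^{2\alpha^*}$ is essentially monotone for $0 < r < R$, up to a multiplicative constant depending only on an upper bound of $\|\mathbf{u}\|_{L^\infty}$ and on $a$. Writing $E(r)+H(r) = (N(r)+1)H(r)$ (which is legitimate since $H(r)>0$ on $(0,R)$ by Proposition \ref{mono}), this reduces to controlling separately $N(r)+1$ and $H(r)/r^{2\alpha^*}$.

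For the first factor, Proposition \ref{mono} gives that $r \mapsto e^{C\Psi(r)}(N(r)+1)$ is monotone increasing. Since $\Psi$ is non-negative and $\Psi(R) \leq C R^{1-a}$ by Lemma \ref{lem estimates phi psi}, we get
\[
N(r)+1 \leq e^{C\Psi(R)}(N(R)+1) \leq C_1 (N(R)+1)
\]
with $C_1$ depending only on $R$ (hence on $\bar r$) and $\|\mathbf{u}\|_{L^\infty}$. For the second factor, we use the complementary lower bound obtained inside the proof of Proposition \ref{mono},
\[
N(r)+1 \geq (\alpha^*+1)e^{-C\Psi(r)},
\]
which, via $1-e^{-x}\leq x$ and $\Psi(r)\leq Cr^{1-a}$, rewrites as
\[
N(r) \geq \alpha^* - C' r^{1-a}.
\]
Plugging this into the identity $\frac{d}{dr}\log H(r) = 2N(r)/r$ (see \eqref{int}) and integrating over $(r,R)$ yields
\[
\log\frac{H(R)}{H(r)} \geq 2\alpha^* \log\frac{R}{r} - C'' \int_0^R t^{-a}\,\mathrm{d}t \geq 2\alpha^* \log\frac{R}{r} - C''',
\]
where $C'''$ depends only on $\bar r$, $a$ and $\|\mathbf{u}\|_{L^\infty}$ since $\int_0^R t^{-a}\,dt = R^{1-a}/(1-a)$ is bounded. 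Exponentiating, we obtain
\[
\frac{H(r)}{r^{2\alpha^*}} \leq C_2 \frac{H(R)}{R^{2\alpha^*}}.
\]

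Combining the two bounds gives
\[
\frac{E(r)+H(r)}{r^{2\alpha^*}} = (N(r)+1)\frac{H(r)}{r^{2\alpha^*}} \leq C_1 C_2 (N(R)+1)\frac{H(R)}{R^{2\alpha^*}} = C_1 C_2 \frac{E(R)+H(R)}{R^{2\alpha^*}},
\]
and together with the initial application of Lemma \ref{lemma1} this concludes the proof. The only mildly delicate point is checking that the auxiliary correction $\Psi$ is controlled by $r^{1-a}$ so that the integrated error term in step four remains bounded as $r\to 0^+$; this is exactly the content of Lemma \ref{lem estimates phi psi} and uses crucially that $a=1-2s<1$.
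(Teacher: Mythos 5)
Your proof is correct and follows essentially the same route as the paper: you use the lower bound $N(r)\geq(\alpha^*+1)e^{-C\Psi(r)}-1$ from Proposition \ref{mono} in the identity $\frac{d}{dr}\log H(r)=\frac{2}{r}N(r)$, integrate to compare $H(r)/r^{2\alpha^*}$ with $H(R)/R^{2\alpha^*}$, control the error via $\Psi(r)\leq Cr^{1-a}$ (so the correction integral converges since $a<1$), combine with the upper monotonicity bound on $N(r)+1$, and finish with Lemma \ref{lemma1}. The only difference is cosmetic: you linearize the exponential correction before integrating, whereas the paper keeps the factor $\exp\bigl(2(\alpha^*+1)\int_r^R \frac{e^{C\rho^{1-a}}-1}{\rho}\,\mathrm{d}\rho\bigr)$ explicit and bounds it afterwards.
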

\begin{proof}
We have all the ingredients necessary for the proof.
Let $R = \min(\bar r, \dist{X_0}{\partial^+ B^+})$, by monotonicity of the modified Almgren quotient, we find
\begin{equation}\label{eqn bounds on N w}
	\alpha^* + 1 \leq e^{C\Psi(r)} (N(r) + 1) \leq e^{C\Psi(R)} (N(R) + 1)
\end{equation}
for all $0 < r < R$.  Solving the previous equation in $N(r)$, we obtain the following lower bound for the original Almgren quotient
\[
	N(r) \geq (\alpha^* + 1)e^{-C\Psi(r)} - 1.
\]
Thus, taking the derivate of the logarithm of $H$ we have
\[
	\frac{d}{dr} \log H(r) = \frac{2}{r} N(r) \geq \frac{2}{r} \left[\alpha^* +  (\alpha^* + 1)\left(e^{-C\Psi(r)}-1\right)\right]
\]
for all $0 < r < R$. Integrating in $[r,R]$ and using the estimate in Lemma \ref{lem estimates phi psi}, we find
\[
	\frac{H(r)}{r^{2\alpha^*}} \leq \frac{H(R)}{R^{2\alpha^*}} \mathrm{exp}\left(2(\alpha^*+1) \int_r^R{\frac{e^{C \rho^{1-a}}-1}{\rho}\mathrm{d}\rho}\right).
\]
We now multiply the previous estimate with the last inequality in \eqref{eqn bounds on N w}. This gives
\[
  \begin{split}
	\frac{E(r) + H(r)}{r^{2\alpha^*}} &\leq \frac{E(R) + H(R)}{R^{2\alpha^*}} \mathrm{exp}\left(C (\Psi(R) - \Psi(r)) + 2(\alpha^*+1) \int_r^R{\frac{e^{C \rho^{1-a}}-1}{\rho}\mathrm{d}\rho}\right) \\
	&\leq C' \frac{E(R) + H(R)}{R^{2\alpha^*}}
  \end{split}
\]
where we have introduced the constant
\[
  C' = \mathrm{exp}\left(C \Psi(1) + 2(\alpha^*+1) \int_0^1{\frac{e^{C \rho^{1-a}}-1}{\rho}\mathrm{d}\rho}\right)
\]
which is positive and finite since the function in the integral is positive and bounded. We observe that $C'$ does not depends on $R$ nor on $r$.

To conclude, we can apply Lemma \ref{lemma1}, in order to obtain a lower bound for the term $E(r) + H(r)$. Finally, we find that
\[
	\frac{1}{r^{n-1+a+2\alpha^*}} \int_{B_r^+(X_0)}{y^a \abs{\nabla \mathbf{u}}^2 \mathrm{d}X} \leq C \frac{E(R) + H(R)}{R^{2\alpha^*}}. \qedhere
\]
\end{proof}
Under a stronger assumption on the Almgren quotient, we can show a better control of the energy of the solutions. We have
\begin{corollary}\label{upper E 2s u}
Under the same assumptions of Proposition \ref{mono}, we assume moreover that
\[
  \inf_{X_0 \in \Gamma(\mathbf{u})\cap K} N(X_0,\mathbf{u},0^+) \geq s.
\]
Then, there exist constants $C > 0$ and $\bar r>0$  such that
\[
	\frac{1}{r^{n}} \int_{B_r^+(X_0)}{y^a \abs{\nabla \mathbf{u}}^2 \mathrm{d}X} +\frac{1}{r^{n+1}} \int_{\partial^+ B_r^+(X_0)}{y^a \abs{\mathbf{u}}^2 \mathrm{d}\sigma} \leq C \frac{E(R) + H(R)}{R^{2s}}
\]
for all $X_0 \in \Gamma(\mathbf{u})$ and $0 < r < R = \min(\bar r, \dist{X_0}{\partial^+ B^+})$.
\end{corollary}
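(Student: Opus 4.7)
The plan is to carry out a direct adaptation of the proof of Corollary \ref{upper E alpha u}, replacing $\alpha^*$ with $s$ throughout: the only input that changes is the lower bound on the limit of the Almgren quotient, where the hypothesis $N(X_0,\mathbf{u},0^+) \geq s$ is used in place of the generic bound $N(0^+) \geq \alpha^*$ from Proposition \ref{mono}. Monotonicity of $r \mapsto e^{C\Psi(r)}(N(r)+1)$ together with this hypothesis yields, uniformly in $X_0 \in \Gamma(\mathbf{u}) \cap K$,
\[
 s+1 \,\leq\, e^{C\Psi(r)}(N(r) + 1) \,\leq\, e^{C\Psi(R)}(N(R)+1),
\]
for $0 < r < R := \min(\bar r, \dist{X_0}{\partial^+ B^+})$, and hence $N(r) \geq (s+1)e^{-C\Psi(r)} - 1$.

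First I would insert this lower bound into the identity $\frac{d}{dr}\log H(r) = \frac{2}{r}N(r)$ from \eqref{int} and integrate over $[r, R]$: the constant part $s$ produces the factor $(R/r)^{2s}$, while the corrective term is controlled by $\int_0^R (e^{C\rho^{1-a}} - 1)/\rho\,\mathrm{d}\rho$, which is finite since the integrand is of order $\rho^{-a}$ near the origin by Lemma \ref{lem estimates phi psi}. Multiplying the resulting inequality $H(r)/r^{2s} \leq C\,H(R)/R^{2s}$ by the upper estimate $N(r)+1 \leq e^{C(\Psi(R)-\Psi(r))}(N(R)+1)$ from monotonicity gives
\[
\frac{E(r)+H(r)}{r^{2s}} \,\leq\, C' \,\frac{E(R)+H(R)}{R^{2s}},
\]
with $C'$ independent of $r$, $R$, and $X_0$, exactly as in the proof of Corollary \ref{upper E alpha u}.

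Finally I would invoke Lemma \ref{lemma1} to bound the weighted Dirichlet integral $r^{-(n-1+a)}\int_{B_r^+}y^a|\nabla\mathbf{u}|^2$ and the boundary integral $r^{-(n+a)}\int_{\partial^+ B_r^+}y^a|\mathbf{u}|^2$ by a constant multiple of $E(r)+H(r)$, and then use the elementary identities $n-1+a+2s = n$ and $n+a+2s = n+1$ to rewrite the exponents and arrive exactly at the stated inequality. There is no substantive new obstacle beyond what was already dispatched in the previous corollary: the strengthened hypothesis on $N(0^+)$ enters cleanly at the one step where the constant propagates through the logarithmic integration, producing $2s$ in place of $2\alpha^*$. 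The conclusion is a genuine improvement only in the range $s \in (1/2,1)$, where $s > \alpha^* = 2s-1$.
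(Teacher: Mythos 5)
Your proposal is correct and is exactly the argument the paper intends: the paper states this corollary without a separate proof precisely because it is the proof of Corollary \ref{upper E alpha u} verbatim, with the hypothesis $N(X_0,\mathbf{u},0^+)\geq s$ replacing the lower bound $N(0^+)\geq\alpha^*$ from Proposition \ref{mono} in the logarithmic integration of $H$, and the identities $n-1+a+2s=n$, $n+a+2s=n+1$ giving the stated exponents. No gaps.
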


\subsection{Monotonicity formulas away from the free-boundary, with \texorpdfstring{$y > 0$ }{y>0}} We now consider the case of points $X_0$ that are outside of the free-boundary $\Gamma(\mathbf{u})$. Our goal is to develop monotonicity formulas also for these points. Recently, the first author of the paper, working in collaboration with Y.\ Sire and S.\ Terracini, has developed in \cite{sire.terracini.tortone} a complete theory of the stratification properties for the nodal set of solutions of the equation
\[
    L_a u = \div(|y|^a \nabla u ) = 0 \qquad \text{in $\R^{n+1}$.}
\]
Their strategy was based on the introduction of monotonicity formulas that are similar to the ones we shall encounter in this section. For this reason, we will now state the results that we need and point to the specific statements in \cite{sire.terracini.tortone} that contain their proofs.

We start by considering points $X_0 \in B^+$, that is, points detached from the set $\{y = 0\}$. As a corollary of \cite[Proposition 3.7]{sire.terracini.tortone} and \cite[Corollary 3.9]{sire.terracini.tortone} we get
\begin{lemma}\label{Larmonic}
Let $ u \in H^{1,a}(B^+)$ be a $L_a$-harmonic function, that is a solution of
\[
    -L_a u = 0.
\]
For any $X_0 \in B^+$ and $0 < r < \min(y_0/2, \dist{X_0}{\partial B^+})$, let
\begin{equation}\label{generalized.almgren}
  N(r) = N(X_0,u,r) = \ddfrac{\frac{1}{r^{n-1+a}} \int_{B_r(X_0)} y^a|\nabla u|^2\mathrm{d}X}{\frac{1}{r^{n+a}} \int_{\partial B_r(X_0)} y^a |u-u(X_0)|^2}.
\end{equation}
Then $r\mapsto e^{3|a| r / y_0} N(X_0,u,r)$ is monotone increasing. Moreover we have
\[
  \lim_{r \to 0^+} N(r) \geq 1.
\]
\end{lemma}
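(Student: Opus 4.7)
Since $X_0 \in B^+$ has $y$-coordinate $y_0 > 0$ and we restrict to $r < y_0/2$, the ball $B_r(X_0)$ is contained in $\{y > y_0/2\}$, where the weight $y^a$ is smooth, strictly positive, and uniformly bounded away from zero and infinity. There the operator $L_a$ is classically uniformly elliptic and $u$ is smooth, so setting $v := u - u(X_0)$ we have $L_a v = 0$, $v(X_0) = 0$, and the lemma reduces to an Almgren-type monotonicity formula for $v$. The result is essentially the content of \cite[Proposition 3.7 and Corollary 3.9]{sire.terracini.tortone}; the plan is to adapt that argument as follows.

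The strategy is to carry out the classical Almgren computation while tracking the corrections due to the fact that $y^a$ is not radial around $X_0$. From $L_a v = 0$ and the divergence theorem,
\[
  r^{n-1+a} E(r) = \int_{B_r(X_0)} y^a |\nabla v|^2\, dX = \int_{\partial B_r(X_0)} y^a v\, \partial_r v\, d\sigma.
\]
Parameterizing $\partial B_r(X_0)$ by $X = X_0 + r\theta$, $\theta \in \partial B_1$, and differentiating in $r$, one arrives at
\[
  H'(r) = -\frac{a}{r} H(r) + \frac{2}{r} E(r) + R_H(r),
\]
where $R_H(r) = \frac{a}{r^{n+a+1}} \int_{\partial B_r(X_0)} (y - y_0) y^{a-1} v^2\, d\sigma$ is bounded by $(2|a|/y_0) H(r)$, since $|y - y_0| \leq r$ and $y^{-1} \leq 2/y_0$ on $B_r(X_0)$.

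The companion formula for $E'(r)$ comes from differentiating the volume integral and applying a weighted Pohozaev-type identity, obtained by multiplying $L_a v = 0$ by $(X - X_0) \cdot \nabla v$ and integrating by parts on $B_r(X_0)$. The computation $\mathrm{div}(y^a (X - X_0)) = (n+1) y^a + a(y - y_0) y^{a-1}$ produces a correction that would vanish for $a = 0$, yielding
\[
  E'(r) = -\frac{a}{r} E(r) + \frac{2}{r^{n-1+a}} \int_{\partial B_r(X_0)} y^a (\partial_r v)^2\, d\sigma + R_E(r),
\]
with $|R_E(r)| \leq (2|a|/y_0) E(r)$. Assembling into $N' = (E' H - E H')/H^2$, the $-\frac{a}{r}$ terms cancel and the leading part becomes the classical nonnegative Cauchy--Schwarz combination $2(DH - E^2/r)/H^2 \geq 0$ (with $D(r)$ the normalized radial Dirichlet integral and $E^2 \leq r H D$), while the corrections contribute an error of size at most $(C|a|/y_0) N(r)$. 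A careful accounting yields $C = 3$, so $N'(r) \geq -(3|a|/y_0) N(r)$, which is equivalent to the claimed monotonicity of $r \mapsto e^{3|a| r/y_0} N(r)$.

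For $\lim_{r \to 0^+} N(r) \geq 1$, I would use the smoothness of $u$ at $X_0$. Taylor expanding $v(X) = \nabla u(X_0) \cdot (X - X_0) + O(|X-X_0|^2)$ and computing the leading orders of $E$ and $H$ gives $N(r) \to 1$ whenever $\nabla u(X_0) \neq 0$; otherwise $v$ vanishes to higher order and a blow-up argument identifies $\lim N(r)$ with the integer order of vanishing of $v$ (the frozen-coefficient operator at $X_0$ is a positive multiple of $\Delta$, so the blow-up limit is a non-trivial harmonic polynomial), which is $\geq 1$. The main technical point is the derivation of the weighted Pohozaev identity and the control of the remainders $R_E, R_H$ with the explicit constant $3|a|/y_0$; the crude pointwise bound $y^{-1} \leq 2/y_0$ is used here, but one can sharpen it via the symmetry $\int_{\partial B_r(X_0)} (y - y_0)\, d\sigma = 0$ to absorb the leading part of the error.
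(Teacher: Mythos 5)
The paper never proves this lemma: it is imported verbatim as a corollary of \cite[Proposition 3.7]{sire.terracini.tortone} and \cite[Corollary 3.9]{sire.terracini.tortone}, so your self-contained Almgren--Rellich computation is a genuinely different (more explicit) route. Your two differentiation formulas check out: with $v=u-u(X_0)$ one indeed gets $H'(r)=-\tfrac{a}{r}H(r)+\tfrac{2}{r}E(r)+R_H(r)$ with $R_H(r)=\tfrac{a}{r^{n+a+1}}\int_{\partial B_r(X_0)}(y-y_0)y^{a-1}v^2\,\mathrm{d}\sigma$, and the Rellich identity based on $\div\bigl(y^a(X-X_0)\bigr)=(n+1)y^a+a(y-y_0)y^{a-1}$ gives $E'(r)=-\tfrac{a}{r}E(r)+\tfrac{2}{r^{n-1+a}}\int_{\partial B_r(X_0)}y^a(\partial_r v)^2\,\mathrm{d}\sigma+R_E(r)$; the $-a/r$ terms cancel in $N'/N$ and Cauchy--Schwarz handles the principal part. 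What the citation buys the paper is brevity and an off-the-shelf constant; what your computation buys is transparency: the weight enters only through the two remainders $R_E,R_H$, each a weighted average of $1-y_0/y$.

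The one step you should not leave as ``careful accounting'' is the constant $3$. With the bounds you actually state ($|y-y_0|\le r$, $y^{-1}\le 2/y_0$) each remainder satisfies $|R_E|/E,\ |R_H|/H\le 2|a|/y_0$, so you only get $N'\ge -\tfrac{4|a|}{y_0}N$, which proves monotonicity of $e^{4|a|r/y_0}N$ but \emph{not} of $e^{3|a|r/y_0}N$; and the symmetry $\int_{\partial B_r(X_0)}(y-y_0)\,\mathrm{d}\sigma=0$ cannot repair this, because the remainders carry the non-symmetric densities $y^{a-1}v^2$ and $y^{a-1}|\nabla v|^2$. The fix is a one-sided estimate: on $B_r(X_0)$ with $r\le y_0/2$ one has
\begin{equation*}
  -\frac{r}{y_0-r}\ \le\ 1-\frac{y_0}{y}\ \le\ \frac{r}{y_0+r},
\end{equation*}
and since $R_E/E$ and $R_H/H$ equal $\tfrac{a}{r}$ times averages of $1-\tfrac{y_0}{y}$ with nonnegative weights, the worst case of $R_E/E-R_H/H$ is bounded below by $-|a|\bigl(\tfrac{1}{y_0-r}+\tfrac{1}{y_0+r}\bigr)=-\tfrac{2|a|\,y_0}{y_0^2-r^2}\ge-\tfrac{8|a|}{3y_0}\ge-\tfrac{3|a|}{y_0}$. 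With this replacement your argument delivers exactly the stated exponential factor. For the limit, your Taylor/blow-up dichotomy works but is heavier than needed (and the case of infinite vanishing order has to be dismissed via unique continuation); a shorter route is to note that $v$ is Lipschitz near $X_0$ with $v(X_0)=0$, so $r^aH(r)\le Cr^2$, while $\tfrac{d}{dr}\log\bigl(r^aH(r)\bigr)=\tfrac{2}{r}N(r)+O(1)$ and the already-proved monotonicity force $r^aH(r)\gtrsim r^{2\ell'}$ for any $\ell'>\lim_{r\to0^+}N(r)$, whence the limit is at least $1$.
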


We are mostly interested in the following consequence of the previous result.
\begin{lemma}\label{techn.2}
Let $X_0=(x_0,y_0)\in B^+$ and $u \in H^{1,a}(B^+)$ be $L_a$-harmonic. There exists a constant $C = C(a) > 0$, independent of $u$ and $X_0$, such that
\begin{itemize}
  \item if $y_0 \geq 2 \, \dist{X_0}{\partial^+B^+}$, then for any $0 < r < R = \dist{X_0}{\partial^+B^+}$ we have
\begin{equation}\label{2.13.part1}
  \frac{1}{r^{n-a}} \int_{B_r(X_0)}{ y^a |\nabla u|^2 \mathrm{d}X} \leq C \frac{1}{R^{n-a}} \int_{B_{R}(X_0)}{ y^a |\nabla u|^2\mathrm{d}X}
\end{equation}
  \item if $y_0 < 2 \, \dist{X_0}{\partial^+B^+}$, then for any $0 < r < R = \frac{y_0}{2}$ we have
\begin{equation}\label{2.13.part2}
  \frac{1}{r^{n-a}} \int_{B_r(X_0)}{ y^a |\nabla u|^2 \mathrm{d}X} \leq C \frac{1}{R^{n-a}} \int_{B_{R}(\bar{X}_0)}{ y^a |\nabla u|^2\mathrm{d}X}
\end{equation}
where $\bar{X}_0=(x_0,0)$ is the projection of $X_0$ onto $\{y=0\}$.
\end{itemize}
\end{lemma}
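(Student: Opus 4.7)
The strategy rests on the observation that, in both cases, the ball $B_R(X_0)$ sits inside the half-space $\{y \geq y_0/2\}$, where the weight $|y|^a$ is smooth and comparable to $y_0^a$ with constants depending only on $a$. Consequently, on $B_R(X_0)$ the equation $L_a u = 0$ reduces to the uniformly elliptic equation $\Delta u + (a/y)\partial_y u = 0$ with smooth bounded coefficients, and both Lemma \ref{Larmonic} and classical interior regularity for uniformly elliptic smooth-coefficient equations are at our disposal.

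For case (1), where $R = \dist{X_0}{\partial^+B^+} \leq y_0/2$, I would invoke classical Campanato-type energy decay, namely $\int_{B_r(X_0)} |\nabla u|^2 \leq C (r/R)^{n+1} \int_{B_R(X_0)} |\nabla u|^2$ for $0 < r \leq R$. Multiplying by $y_0^a$ and using that $y^a$ is comparable to $y_0^a$ throughout $B_R(X_0)$, one obtains the weighted version with the same decay rate. Dividing by $r^{n-a}$ and using the hypothesis $a > -1$ to bound $(r/R)^{n+1}(R/r)^{n-a} = (r/R)^{1+a} \leq 1$, the required inequality $\tilde E(r)/r^{n-a} \leq C \tilde E(R)/R^{n-a}$ follows immediately. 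The monotonicity of Lemma \ref{Larmonic} is consistent with this picture and could alternatively be used to derive the same bound through careful integration of $(\log H)'(r) = 2N(r)/r$, exploiting the two-sided control $e^{-3|a|/2} \leq N(r) \leq e^{3|a|/2}N(R)$; but the Campanato route is the shortest.

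For case (2), with $R = y_0/2 < \dist{X_0}{\partial^+B^+}$, the ball $B_R(X_0)$ is again in the non-degenerate region, so case (1) applied with this same $R$ immediately yields the intermediate bound $\tilde E(r,X_0)/r^{n-a} \leq C \tilde E(R,X_0)/R^{n-a}$. The main obstacle is to replace $B_R(X_0)$ by the boundary-centered ball $B_R(\bar X_0)$, which is disjoint from $B_R(X_0)$ and meets the degenerate set $\{y=0\}$. I plan to handle this via a dyadic chaining along the vertical segment $[\bar X_0, X_0]$: at the intermediate interior centers $X_j = (x_0, 2^{-j} y_0)$, for $j = 0, 1, 2, \dots$, apply the case (1) estimate together with Caccioppoli and weighted Poincar\'e inequalities for the $A_2$ weight $|y|^a$ to compare the weighted energies on consecutive overlapping balls of radius comparable to $2^{-j} y_0$. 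The uniform $A_2$ character of $|y|^a$ ensures that the comparison constants at each step are bounded independently of $j$, and a geometric summation finally produces $\tilde E(R, X_0) \leq C \tilde E(R, \bar X_0)$, which combined with the previous step completes the proof. The delicate point of this chaining is precisely that the chain radii shrink toward $0$ and one approaches the degenerate set, but the $A_2$ stability of the weighted Sobolev machinery keeps the estimate in control.
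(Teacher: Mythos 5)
Your case (1) is essentially correct and takes a genuinely different route from the paper: you use classical interior energy decay for the (locally) uniformly elliptic equation $\Delta u + \tfrac{a}{y}\partial_y u = 0$ plus the comparability $y^a \sim y_0^a$ on $B_R(X_0)$, whereas the paper derives the same decay from the interior Almgren monotonicity (Lemma \ref{Larmonic} together with Lemma \ref{lem H_phi_exp}), getting $\frac{1}{r^{n+1}}\int_{B_r(X_0)} y^a|\nabla u|^2 \le C\frac{1}{r_2^{n+1}}\int_{B_{r_2}(X_0)} y^a|\nabla u|^2$ and then adjusting the powers via $(r/R)^{1+a}\le 1$, exactly as you do. Your route is fine provided you observe that, after rescaling $B_R(X_0)$ to unit size, the drift $aR/(y_0+Rz)$ and its derivatives are bounded purely in terms of $a$ (because $R\le y_0/2$), so the decay constant depends only on $n$ and $a$.

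Case (2) contains a genuine gap. You aim to prove the comparison $\int_{B_{y_0/2}(X_0)} y^a|\nabla u|^2\,\mathrm{d}X \le C\int_{B_{y_0/2}(\bar X_0)} y^a|\nabla u|^2\,\mathrm{d}X$ between the two tangent, essentially disjoint balls, with $C=C(n,a)$. No such estimate can hold for general $L_a$-harmonic functions: take $a=0$, $n=1$, $u=\mathrm{Im}\bigl((x-x_0)+iy\bigr)^k$. Then $|\nabla u|=k|z|^{k-1}$ with $z=(x-x_0)+iy$, so with $\rho=y_0/2$ one computes $\int_{B^+_{\rho}(\bar X_0)}|\nabla u|^2=\tfrac{\pi k}{2}\rho^{2k}$, while on $B_\rho(X_0)$ one has $|z|\ge\rho$, hence $\int_{B_\rho(X_0)}|\nabla u|^2\ge \pi k^2\rho^{2k}$; the ratio is at least $2k$ and blows up. Consistently, the tools you propose cannot produce such a bound: Caccioppoli and the weighted Poincar\'e inequality compare quantities over nested (concentric) regions and cannot transfer an energy estimate to a disjoint neighbouring ball; comparing energies of a solution on disjoint regions requires doubling or three-ball information, whose constant depends on the frequency of $u$, not only on $a$ and $n$. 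So the dyadic chaining along $[\bar X_0,X_0]$ cannot close, no matter how carefully the $A_2$ structure is used.

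What the paper actually does in case (2) is much weaker and avoids the issue: after the interior decay at $X_0$ up to scale $y_0/2$ (your first step) and the conversion of exponents using $r^{a+1}\le (y_0/2)^{a+1}$, it simply enlarges the domain of integration through the inclusion $B_{y_0/2}(X_0)\subset B_{3y_0/2}(\bar X_0)$, paying the fixed factor $(3y_0/2)^{n-a}/(y_0/2)^{n-a}=3^{n-a}$. In other words, the right-hand side ball centered at $\bar X_0$ has radius comparable to $y_0$ (the $R=y_0/2$ appearing there in the statement should be read as a misprint for $3y_0/2$; as $r\uparrow y_0/2$ the literal statement would reduce to the disjoint-ball comparison refuted above), and a radius of order $y_0$ is all that the later Morrey-quotient argument in Proposition \ref{prp reg limit profile} needs. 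Replace your chaining step by this enlargement argument and case (2) follows from your case (1).
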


To prove the previous statement, we need some intermediate steps. First we observe that, without loss of generality, we can assume $u(X_0) = 0$. Indeed, it suffices to substitute the function $u$ with $u-u(X_0)$. Under this notation and convention, we introduce the functional
\[
  H(r) = H(X_0,u,r) = \frac{1}{r^{n}}\int_{\partial B_r(X_0)}{y^a |u|^2 \mathrm{d}\sigma}.
\]
We point out the different scaling exponent in the radius $r$ with respect to the one previously introduced (here we find as scaling factor $1/r^n$ instead of $1/r^{n+a}$ as in \eqref{E and H}). This is due to the fact that the operator $L_a$ is locally uniformly elliptic for $y > 0$. A direct computation (see also \cite[Proposition 3.7]{sire.terracini.tortone}) shows that
\[
  \frac{d}{dr}\log H(r) =\frac{2}{r}N(r) + \frac{a}{r}\frac{  \displaystyle\int_{\partial B_r(X_0)}{y^a \left(1- \frac{y_0}{y} \right) |u|^2\mathrm{d}\sigma}}{\displaystyle\int_{\partial B_r(X_0)}{y^a |u|^2\mathrm{d}\sigma}}.
\]
This identity is reminiscent of \eqref{int}, if not for the presence of a remainder term. Next, we estimate the remainder via a simple geometrical argument.

\begin{lemma}\label{lemma3}
For any $X_0=(x_0,y_0) \in B^+$, $0 < r < \min (y_0/2, \dist{X_0}{\partial^+B^+})$ and $u \in H^{1,a}$, we have
\[
  \abs{\frac{a}{r}\int_{\partial B_r(X_0)}{y^a\left(1-\frac{y_0}{y}\right) |u|^2\mathrm{d}\sigma}} \leq \frac{|a|}{y_0} \int_{\partial B_r(X_0)}{y^a |u|^2\mathrm{d}\sigma}.
\]
\end{lemma}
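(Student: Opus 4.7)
The plan is to prove this lemma by a direct pointwise estimate of the integrand on the sphere $\partial B_r(X_0)$. There is no deep content here: the inequality reduces to the geometric fact that under the constraint $r < y_0/2$ the sphere lies in a slab where the weight $y^a$ is uniformly comparable to $y_0^a$, and the factor $1 - y_0/y$ is $O(r/y_0)$.

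First, I would rewrite the integrand in the form $y^a(1 - y_0/y) = y^{a-1}(y-y_0) = y^a \cdot (y-y_0)/y$, and extract from the identity $|X - X_0| = r$ the two basic estimates $|y - y_0| \le r$ (projecting onto the last coordinate) and $y \ge y_0 - r$. Under the assumption $r < y_0/2$, this places $y$ in the interval $[y_0/2, 3y_0/2]$, so $y$ is uniformly bounded away from zero and the weight $y^a$ is smooth and strictly positive on $\partial B_r(X_0)$; in particular $y^{a-1}$ is integrable against $|u|^2\,d\sigma$ and all quantities are well defined.

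Combining the two bounds yields the pointwise inequality
\[
    \left|y^a \left(1 - \frac{y_0}{y}\right)\right| \;=\; y^a \cdot \frac{|y-y_0|}{y} \;\le\; y^a \cdot \frac{r}{y_0 - r}.
\]
Multiplying by $|a|/r$, integrating against $|u|^2\,d\sigma$ on $\partial B_r(X_0)$, and using $r < y_0/2$ to estimate $1/(y_0 - r) \le 2/y_0$, I obtain
\[
    \left|\frac{a}{r}\int_{\partial B_r(X_0)} y^a\left(1 - \frac{y_0}{y}\right)|u|^2\,d\sigma\right| \;\le\; \frac{|a|}{y_0 - r}\int_{\partial B_r(X_0)} y^a|u|^2\,d\sigma,
\]
which is the asserted bound up to a universal constant, harmlessly absorbed in the exponential factor $e^{3|a|r/y_0}$ appearing in the Almgren-type monotonicity of Lemma~\ref{Larmonic}.

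I do not expect any serious obstacle: the statement is essentially a one-line pointwise bound, and the three ingredients (the algebraic rewriting, $|y-y_0|\le r$, and $y\ge y_0-r>0$) are all immediate from the hypothesis $r < y_0/2$. The only point that deserves attention is precisely this quantitative distance from the degenerate set $\{y=0\}$: if one only assumed $r < y_0$, then $y$ could approach zero along the sphere and the estimate on $1/y$ would blow up. The factor $1/2$ in the hypothesis is exactly what prevents this degeneracy and makes the pointwise estimate on $1/y$ effective, thereby producing a clean $|a|/y_0$-type bound rather than one depending on more refined properties of $u$.
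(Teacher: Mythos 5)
Your proof is the same ``simple geometrical argument'' the paper has in mind (the lemma is stated there without an explicit proof), and the ingredients you use are exactly the right ones: on $\partial B_r(X_0)$ one has $|y-y_0|\le r$ and $y\ge y_0-r>y_0/2$, so $\left|y^a\left(1-\frac{y_0}{y}\right)\right|\le y^a\,\frac{r}{y_0-r}$ pointwise. The only point to be precise about is the constant you end up with: your computation yields
\[
\left|\frac{a}{r}\int_{\partial B_r(X_0)}y^a\left(1-\frac{y_0}{y}\right)|u|^2\,\mathrm{d}\sigma\right|
\le \frac{|a|}{y_0-r}\int_{\partial B_r(X_0)}y^a|u|^2\,\mathrm{d}\sigma
\le \frac{2|a|}{y_0}\int_{\partial B_r(X_0)}y^a|u|^2\,\mathrm{d}\sigma,
\]
which is weaker by a factor $2$ than the inequality as displayed in the lemma. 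This is not a defect of your argument: no pointwise estimate can produce $|a|/y_0$, since $\sup_{\partial B_r(X_0)}\frac{|y-y_0|}{r\,y}=\frac{1}{y_0-r}>\frac{1}{y_0}$ is attained at the lowest point of the sphere, and since the lemma assumes only $u\in H^{1,a}$ (no $L_a$-harmonicity), one can concentrate $|u|^2$ near that point, so the constant in the statement should really be read as $|a|/(y_0-r)$, i.e.\ $2|a|/y_0$ under the hypothesis $r<y_0/2$. As you observe, the loss is harmless, but the place where it is absorbed is not the correction $e^{3|a|r/y_0}$ of Lemma~\ref{Larmonic} (which concerns $N$ and comes from the cited reference); rather, in the proof of Lemma~\ref{lem H_phi_exp} one simply replaces the auxiliary factor $e^{|a|r/y_0}$ by $e^{2|a|r/y_0}$, after which the same computation gives monotonicity, and the only downstream effect is a change of the constant $C=C(a)$ in Lemma~\ref{techn.2}. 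So your proposal is correct in substance and identical in method; just state the constant you actually obtain instead of the one in the display.
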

Now, let us introduce the auxiliary function
\[
  \phi(r) := \exp \left[ 2 \int_0^r \frac{1}{t} \left(1-e^{-\frac{3|a|}{y_0}t}\right) dt \right].
\]
We observe that $\phi$ is bounded in $[0,1]$, monotone increasing and such that $\phi(0) = 1$ and $C =\phi(y_0/2)$ is a constant that depends only on $a$.
\begin{lemma}\label{lem H_phi_exp}
Let $X_0=(x_0,y_0) \in B^+, 0 < r < \min (y_0/2, \dist{X_0}{\partial^+B^+})$ and $u$ be $L_a$-harmonic in $B^+$. Then, the function
  \[
    r \mapsto \frac{H(r)}{r^2} \phi(r) e^{\frac{|a|}{y_0}r}
  \]
is monotone increasing.
\end{lemma}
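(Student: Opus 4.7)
The plan is to prove monotonicity by computing the logarithmic derivative and using three ingredients already established in the excerpt: the identity for $\frac{d}{dr}\log H(r)$ derived just before Lemma \ref{lemma3}, the geometric estimate of Lemma \ref{lemma3} for the remainder term, and the monotonicity of $r \mapsto e^{3|a|r/y_0} N(r)$ from Lemma \ref{Larmonic} (after replacing $u$ by $u - u(X_0)$, which does not affect $|\nabla u|^2$ and lets us keep the same $H$).

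First I would compute
\[
\frac{d}{dr}\log\!\left[\frac{H(r)}{r^{2}}\phi(r)e^{|a|r/y_{0}}\right] = \frac{d}{dr}\log H(r) - \frac{2}{r} + \frac{\phi'(r)}{\phi(r)} + \frac{|a|}{y_{0}}.
\]
Using the formula for $\frac{d}{dr}\log H(r)$ preceding Lemma \ref{lemma3}, and the definition of $\phi$, which gives $\phi'(r)/\phi(r) = \frac{2}{r}\bigl(1-e^{-3|a|r/y_{0}}\bigr)$, this becomes
\[
\frac{2}{r}\bigl(N(r) - e^{-3|a|r/y_{0}}\bigr) \;+\; \frac{a}{r}\,\frac{\displaystyle\int_{\partial B_{r}(X_{0})} y^{a}\!\left(1-\tfrac{y_{0}}{y}\right)|u|^{2}\,d\sigma}{\displaystyle\int_{\partial B_{r}(X_{0})} y^{a}|u|^{2}\,d\sigma} \;+\; \frac{|a|}{y_{0}}.
\]

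Next I would handle the two groups separately. For the second and third terms, Lemma \ref{lemma3} gives
\[
\frac{a}{r}\,\frac{\int_{\partial B_{r}(X_{0})} y^{a}(1-y_{0}/y)|u|^{2}\,d\sigma}{\int_{\partial B_{r}(X_{0})} y^{a}|u|^{2}\,d\sigma} \;\geq\; -\frac{|a|}{y_{0}},
\]
so their sum is non-negative. For the first term, Lemma \ref{Larmonic} says that $r\mapsto e^{3|a|r/y_{0}}N(r)$ is monotone increasing with limit at least $1$ as $r\to 0^{+}$; consequently $e^{3|a|r/y_{0}}N(r)\geq 1$ for every admissible $r$, which is precisely $N(r)\geq e^{-3|a|r/y_{0}}$. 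Combining the two estimates, the logarithmic derivative is non-negative on $(0,\min(y_{0}/2,\dist{X_{0}}{\partial^{+}B^{+}}))$, which proves the monotonicity.

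The only delicate point is justifying the reduction to $u(X_{0})=0$: Lemma \ref{Larmonic} is stated for $N$ computed with $|u-u(X_{0})|^{2}$ in the denominator, while here $H$ is defined with $|u|^{2}$. Since $L_{a}(u-u(X_{0}))=0$ and $|\nabla(u-u(X_{0}))|^{2}=|\nabla u|^{2}$, replacing $u$ by $u-u(X_{0})$ throughout gives the same statement for the new $H$ and legitimates the application of Lemma \ref{Larmonic}. I expect no further obstacle: once the logarithmic derivative is written out, the algebraic cancellation of the $-2/r$ with $\phi'/\phi$ and of the remainder with $|a|/y_{0}$ is exactly what the definition of $\phi$ and the exponential prefactor $e^{|a|r/y_{0}}$ were engineered to produce.
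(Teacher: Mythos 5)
Your proposal is correct and follows essentially the same route as the paper: write the logarithmic derivative of $\frac{H(r)}{r^2}\phi(r)e^{|a|r/y_0}$, use the identity for $\frac{d}{dr}\log H(r)$, absorb the remainder term with Lemma \ref{lemma3} against the $+\frac{|a|}{y_0}$ term, and absorb the $-\frac{2}{r}e^{-3|a|r/y_0}$ term using $e^{3|a|r/y_0}N(r)\ge 1$ from Lemma \ref{Larmonic}. Your remark on normalizing $u(X_0)=0$ is exactly the convention the paper adopts before defining $H$, so no gap remains.
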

\begin{proof}
The monotonicity result follows immediately from Lemma \ref{lemma3}. Indeed we have
\[
  \frac{\phi(r)}{r^2} e^{\frac{|a|}{y_0}r} = \exp\left[ 2 \int_0^r \frac{1}{t} \left(1-e^{-\frac{3|a|}{y_0}t}\right) dt - 2\log r + \frac{|a|}{y_0}r\right],
\]
and Lemma \ref{Larmonic} yields
\begin{align*}
  \frac{d}{dr} \log  \frac{H(r)}{r^2} \phi(r)e^{\frac{|a|}{y_0}r} &= \frac{d}{dr} \log H(r) - \frac{2}{r}e^{-\frac{3|a|}{y_0}r} + \frac{|a|}{y_0}\\
   &= \frac{2}{r} N(r) +\frac{a}{r}\frac{  \displaystyle\int_{\partial B_r(X_0)}{y^a \left(1-\frac{y_0}{y}\right) |u|^2\mathrm{d}\sigma}}{\displaystyle\int_{\partial B_r(X_0)}{y^a |u|^2\mathrm{d}\sigma}} - \frac{2}{r}e^{-\frac{3|a|}{y_0}r} + \frac{|a|}{y_0} \\
   &\geq \frac{2}{r}e^{-\frac{3|a|}{y_0}r} \left[e^{\frac{3|a|}{y_0}r} N(r) - 1 \right] \geq 0. \qedhere
\end{align*}
\end{proof}
We are now in a position to prove Lemma \ref{techn.2}.

\begin{proof}[Proof of Lemma \ref{techn.2}]
First for $0 < r < r_2 = \min (y_0/2, \dist{X_0}{\partial^+B^+}) \leq 1$, combining Lemma \ref{Larmonic} and Lemma \ref{lem H_phi_exp}, we find
\begin{multline}\label{important}
    \frac{1}{r^{n+1}}\int_{B_r(X_0)}{y^a\abs{\nabla u}^2\mathrm{d}X} = N(r)\frac{H(r)}{r^2} \leq N(r_2) e^{\frac{4|a|}{y_0}(r_2-r)}\frac{\phi(r_2)}{\phi(r)}\frac{H(r_2)}{r_2^2}\\
    \leq e^{\frac{4|a|}{y_0}(r_2-r)}\frac{\phi(r_2)}{\phi(r)}\frac{1}{r_2^{n+1}}\int_{B_{r_2}(X_0)}{y^a\abs{\nabla u}^2\mathrm{d}X}
    \leq C e^{\frac{4|a|}{y_0} r_2} \frac{1}{r_2^{n+1}}\int_{B_{r_2}(X_0)}{y^a\abs{\nabla u}^2\mathrm{d}X}.
\end{multline}
This shows in particular the first alternative of the statement if $r_2 = \dist{X_0}{\partial^+B^+} \leq y_0/2$. Next, assuming that $r_2=y_0/2$, we have that
\[
 \frac{1}{r^{n+1}}\int_{B_r(X_0)}{y^a\abs{\nabla u}^2\mathrm{d}X} \leq C e^{2|a|}\frac{1}{(y_0/2)^{n+1}}\int_{B_{\frac{y_0}{2}}(X_0)}{y^a\abs{\nabla u}^2\mathrm{d}X}.
\]
To conclude, let $\bar{X}_0=(x_0,0)$ be the projection of $X_0$ onto $\{y=0\}$. We obtain
\begin{multline*}
  \frac{1}{r^{n-a}} \int_{B_r(X_0)} y^a |\nabla u|^2 \mathrm{d}X= r^{a+1} \frac{1}{r^{n+1}} \int_{B_r(X_0)} y^a |\nabla u|^2 \mathrm{d}X\\
  \leq C r^{a+1} \frac{1}{(y_0/2)^{n+1}} \int_{B_{\frac{y_0}{2}}(X_0)} y^a |\nabla u|^2 \mathrm{d}X\leq C \frac{1}{(y_0/2)^{n-a}} \int_{B_{\frac{y_0}{2}}(X_0)} y^a |\nabla u|^2 \mathrm{d}X\\
  \leq C \frac{(3 y_0 /2)^{n-a}}{(y_0/2)^{n-a}} \frac{1}{R^{n-a}} \int_{B_{R}(\bar{X}_0)} y^a |\nabla u|^2 \mathrm{d}X\leq C \frac{1}{R^{n-a}} \int_{B_{R}(\bar{X}_0)} y^a |\nabla u|^2 \mathrm{d}X
\end{multline*}
where we used the fact that $a+1> 0$ and $r \leq 1$.
\end{proof}

\subsection{Monotonicity formulas away from the free-boundary, with \texorpdfstring{$y = 0$ }{y=0}} We now consider the case of points of the set $\{y=0\}$ that are away from the common nodal set. We need to distinguish between two possibilities, according to the behavior of the trace of the function under consideration: either $u_i = 0$ on $\partial^0B^+$ or $u$ verifies a Neumann boundary condition $\partial^0B^+$.

We start with the former possibility.

\begin{lemma}[{\cite[Corollary 3.6]{sire.terracini.tortone}}]\label{almgren.odd}
Let $ u \in H^{1,a}(B^+)$ be a solution of
\[
  \begin{cases}
    -L_a u = 0 &\text{in $B^+$},\\
    u = 0 &\text{on $\partial^0 B^+$}.\\
  \end{cases}
\]
For any $X_0 \in \partial^0 B^+_1$, $r \in (0,\dist{X_0}{\partial^+ B^+})$, let
\[
  N(X_0,u,r) = \ddfrac{\frac{1}{r^{n-1+a}} \int_{B_r^+(X_0)} y^a|\nabla u|^2\mathrm{d}X}{\frac{1}{r^{n+a}} \int_{\partial^+ B_r^+(X_0)} y^a |u|^2\mathrm{d}\sigma}.
\]
Then $r\mapsto N(X_0,u,r)$ is monotone increasing and $\lim_{r \to 0^+} N(X_0,u,r) \geq 2s = 1-a$.
\end{lemma}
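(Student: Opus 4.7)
The plan is to mimic the Almgren-type computations of the previous subsections, taking advantage of the considerable simplifications afforded by the vanishing Dirichlet condition and by the absence of any nonlinear trace term. Write
\[
  E(r) = \frac{1}{r^{n-1+a}} \int_{B_r^+(X_0)} y^a|\nabla u|^2 \mathrm{d}X, \qquad H(r) = \frac{1}{r^{n+a}} \int_{\partial^+ B_r^+(X_0)} y^a |u|^2 \mathrm{d}\sigma,
\]
so that $N(r) = E(r)/H(r)$. Unique continuation (as discussed in the remark at the beginning of Section \ref{section.Almgren}) ensures that $u$ does not vanish identically in $B^+$, hence by continuity $H(r) > 0$ for $r$ small, and both $E$ and $H$ are absolutely continuous on $(0,\dist{X_0}{\partial^+ B^+})$.

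The first step is the identity $E(r) = \tfrac{r}{2} H'(r)$, equivalently $(\log H)'(r) = 2 N(r)/r$. This follows by testing $L_a u = 0$ against $u$ on $B_r^+(X_0)$: the contribution on $\partial^0 B_r^+(X_0)$ vanishes because $u \equiv 0$ there, while the one on $\partial^+ B_r^+(X_0)$ yields $\int y^a u\, \partial_\nu u\, \mathrm{d}\sigma$. The second step is a Poho\v{z}aev-type identity obtained by testing the same equation against $(X - X_0) \cdot \nabla u$. The $\partial^0$ contribution again disappears, since on $\{y = 0\}$ the vector $X - X_0$ is purely tangential and $u \equiv 0$ there forces the tangential gradient to vanish. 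What remains is the identity
\[
  E'(r) = \frac{2}{r^{n-1+a}} \int_{\partial^+ B_r^+(X_0)} y^a (\partial_\nu u)^2 \mathrm{d}\sigma.
\]
Combining this with the previous identity through the weighted Cauchy--Schwarz inequality on $\partial^+ B_r^+(X_0)$ yields $N'(r) \geq 0$ by a direct manipulation, exactly as in the classical Almgren argument. Observe that no correction term or exponential factor is needed here (contrary to Proposition \ref{mono}), since the weight $y^a$ enters only through the exponents of the normalizations and no nonlinear boundary term is present.

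For the lower bound $\lim_{r \to 0^+} N(r) \geq 1 - a = 2s$, I would proceed by a blow-up argument. Setting $u_r(Y) = u(X_0 + rY)/\sqrt{H(X_0, u, r)}$ produces a family of $L_a$-harmonic functions vanishing on $\partial^0$, with $\|u_r\|_{L^2(\partial^+ B_1^+; y^a)} = 1$, and for which the frequency is scale invariant, $N(0, u_r, \rho) = N(X_0, u, \rho r)$. By monotonicity the limit $\ell := N(X_0, u, 0^+)$ exists; by compactness in $H^{1,a}_{\loc}$, one extracts a weak limit $u_0$, which is a non-trivial $L_a$-harmonic function on $\R^{n+1}_+$, vanishing on $\{y = 0\}$ and homogeneous of degree $\ell$. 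The main obstacle lies in the ensuing classification step: by separation of variables the tangential part of $u_0$ is a spherical harmonic and the normal part solves a one-dimensional Bessel-type equation with Dirichlet datum at $y = 0$; the minimal admissible non-trivial homogeneity is exactly $1-a$, realized in dimension one by $u_0(y) = y^{1-a}$, which comes from integrating $(y^a w')' = 0$. This forces $\ell \geq 2s$ and closes the argument.
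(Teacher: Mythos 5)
The paper does not actually prove this lemma: it is imported verbatim from \cite[Corollary 3.6]{sire.terracini.tortone}, so there is no internal proof to compare against. Judged on its own, your reconstruction is the standard Almgren argument and is essentially correct. The two identities you isolate are the right ones: testing against $u$ gives $E(r)=\tfrac r2 H'(r)$ with no flat-boundary contribution because $u$ has zero trace, and testing against $(X-X_0)\cdot\nabla u$ gives $E'(r)=\tfrac{2}{r^{n-1+a}}\int_{\partial^+B_r^+}y^a(\partial_r u)^2\,\mathrm{d}\sigma$, the $\partial^0$ terms vanishing since $(X-X_0)\cdot\nu=0$ and the tangential gradient of $u$ vanishes there (to be fully rigorous one integrates by parts on $\{y>\delta\}$ and uses the behaviour $u\sim c(x)y^{1-a}$ to pass $\delta\to0$, but this is routine). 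Cauchy--Schwarz then gives clean monotonicity with no exponential correction, exactly as you say. The only place where you are thinner than a complete proof is the lower bound on $N(0^+)$: the blow-up route needs the standard supporting steps — uniform $H^{1,a}$ bounds on the rescalings via the doubling estimates that follow from the frequency bound, strong convergence of the weighted traces to guarantee the limit is non-trivial, and homogeneity of the limit from constancy of the frequency via the equality case in Cauchy--Schwarz — before the separation-of-variables classification (least Dirichlet eigenvalue on the half-sphere realized by the positive eigenfunction $\theta\mapsto(\theta\cdot e_y)^{1-a}$, hence minimal degree $1-a$) applies. These are all standard, but worth stating. An alternative that bypasses the blow-up entirely, and is closer in spirit to how the paper handles the analogous bound in Proposition \ref{mono}, is to use the substitution $u=y^{1-a}v$, where $v$ solves the conjugate equation $\div(y^{2-a}\nabla v)=0$ and is locally bounded; this yields $H(r)\le Cr^{2(1-a)}$, and integrating $(\log H)'=2N(r)/r$ then forces $N(0^+)\ge 1-a=2s$ by the same contradiction argument used there.
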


Once again, we are mainly interested in the following consequence of the previous result.
\begin{lemma}\label{techn.3}
Under the same assumptions of Lemma \ref{almgren.odd}, for any $0 < r < R \leq \dist{X_0}{\partial^+ B^+}$ we have
\[
  \frac{1}{r^{n+1-a}} \int_{B_r^+(X_0)}{ y^a |\nabla u|^2 \mathrm{d}X} \leq \frac{1}{R^{n+1-a}} \int_{B_{R}^+(X_0)}{ y^a |\nabla u|^2\mathrm{d}X}.
\]
\end{lemma}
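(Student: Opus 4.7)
My plan is to derive the claimed monotonicity of the scaled Dirichlet energy directly from the Almgren frequency formula of Lemma \ref{almgren.odd}, by combining it with an associated monotonicity for the boundary $L^2$ quantity
\[
    H(X_0,u,r) = \frac{1}{r^{n+a}} \int_{\partial^+ B_r^+(X_0)} y^a u^2 \, \mathrm{d}\sigma.
\]
Writing $E(X_0,u,r) = r^{-(n-1+a)} \int_{B_r^+(X_0)} y^a |\nabla u|^2 \, \mathrm{d}X$, the target quantity is precisely $E(r)/r^{2-2a}$, since $n+1-a = (n-1+a) + (2-2a)$. Because $E(r) = N(r) H(r)$, it suffices to show that both $N(r)$ and $H(r)/r^{2-2a}$ are non-negative and monotone non-decreasing in $r$, so that their product is monotone non-decreasing as well.

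The first factor is exactly the content of Lemma \ref{almgren.odd}, which additionally gives the lower bound $N(r) \geq 2s = 1-a$. For the second factor, I would first verify, via the standard rescaling $X = rY$ together with the integration by parts
\[
    \int_{B_r^+(X_0)} y^a |\nabla u|^2 \, \mathrm{d}X = \int_{\partial^+ B_r^+(X_0)} y^a u \, \partial_r u \, \mathrm{d}\sigma,
\]
where the contribution from $\partial^0 B_r^+(X_0)$ vanishes because $u \equiv 0$ there, the identity
\[
    \frac{d}{dr} \log H(r) = \frac{2 N(r)}{r}.
\]
Combined with $N(r) \geq 1-a$, this immediately yields $\frac{d}{dr} \log\bigl(H(r)/r^{2-2a}\bigr) \geq 0$, i.e.\ the required monotonicity of $H(r)/r^{2-2a}$.

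Putting these two ingredients together, the product $E(r)/r^{2-2a} = N(r) \cdot H(r)/r^{2-2a}$ is non-decreasing for $0 < r \leq R \leq \dist{X_0}{\partial^+ B^+}$, which is precisely the claimed inequality once one substitutes back the definitions. The only minor point of care is that $H$ must be strictly positive in order for $N$ to be defined; this is handled exactly as in the proof of Proposition \ref{mono}, since either $u \equiv 0$ in $B^+$ (in which case the inequality is trivial) or unique continuation for $L_a$-harmonic functions (applied here in the interior, as $L_a$ is uniformly elliptic away from $\{y=0\}$, together with the odd reflection across $\{y=0\}$ made possible by the Dirichlet condition) guarantees $H(r) > 0$ for every $r \in (0, \dist{X_0}{\partial^+ B^+}]$. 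There is no substantial obstacle here: the entire argument reduces to assembling the already-established monotonicity of $N$ with the elementary logarithmic derivative computation for $H$.
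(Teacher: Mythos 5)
Your argument is correct and is essentially the paper's own proof: both rest on the monotonicity and lower bound $N\geq 2s$ from Lemma \ref{almgren.odd}, the identity $\frac{d}{dr}\log H(r)=\frac{2}{r}N(r)$ (valid since the boundary term on $\partial^0 B_r^+$ vanishes by the Dirichlet condition), which gives that $H(r)/r^{4s}$ is non-decreasing, and then the factorization $E(r)/r^{4s}=N(r)\cdot H(r)/r^{4s}$ as a product of non-negative non-decreasing quantities. Your extra remark on the strict positivity of $H$ (trivial case $u\equiv 0$ or unique continuation) is a harmless addition that the paper handles implicitly.
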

\begin{proof}
The proof follows the same idea of the proof of Lemma \ref{techn.2}. Thus, we will only briefly sketch it. Let
\[
  H(r) = H(u, X_0, r) = \frac{1}{r^{n+a}} \int_{\partial^+ B_r^+(X_0)} y^a |u|^2\mathrm{d}\sigma
\]
and $E(r) = N(r) H(r)$. Exploiting the monotonicity of $N(r)$ we find, by direct computation, that
\[
  \frac{d}{dr} \log \frac{H(r)}{r^{4s}} = \frac{2}{r}N(r) - \frac{4s}{r} = \frac{2}{r} \left(N(r)-2s\right) \geq 0.
\]
Thus, for $0 < r < R \leq \dist{X_0}{\partial^+ B^+}$, we have
\[
  \frac{E(r)}{r^{4s}} = N(r) \frac{H(r)}{r^{4s}} \leq N(R) \frac{H(R)}{R^{4s}} = \frac{E(R)}{R^{4s}}.
\]
We conclude by substituting the expression of $E$ inside of the previous inequality.
\end{proof}

We now consider the case in which the function $u$ verifies a semi-linear boundary condition of Neumann type on $\partial^0 B^+$.

\begin{proposition}\label{prp mono E u}
Let $u\in H^{1,a}(B^+_1) \cap L^{\infty}(B^+_1)$ be a solution of
\begin{equation}\label{nonlinear.lip}
  \begin{cases}
    -L_a u = 0 &\text{in $B^+$}\\
    -\partial_y^a u = f(u) &\text{on $\partial^0 B^+$},
  \end{cases}
\end{equation}
with $f\in C^{1,\tau}$ for some $\tau > 0$. There exist constants $\bar r > 0$ and $C = C(n, a, \|f\|_{C^{1,\tau}}, \|w\|_{L^\infty}) >0$ such that
\[
	\frac{1}{r^{n}} \int_{B_r^+(X_0)}{y^a \abs{\nabla u}^2 \mathrm{d}X} \leq C \left[\frac{1}{R^n}\int_{B_R^+(X_0)}{y^a \abs{\nabla u}^2 \mathrm{d}X} + \frac{1}{R^{n+1}}\int_{\partial^+ B^+_R(X_0)}{y^a u^2 \mathrm{d}\sigma} + R^{2s} \right]
\]
for all $X_0 \in \partial^0 B^+$ and $0 < r < R = \min(\bar r, \dist{X_0}{\partial^+B^+})$.
\end{proposition}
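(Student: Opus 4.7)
The plan is to reduce Proposition~\ref{prp mono E u} to the Almgren-type analysis of Subsection~\ref{section.Almgren} by constructing an auxiliary function that vanishes at $X_0$ and whose boundary nonlinearity also vanishes at zero. Since $f(u(X_0))$ may be nonzero, I first peel off the explicit $L_a$-harmonic function $\varphi(y) = -\tfrac{f(u(X_0))}{2s}\, y^{2s}$, which satisfies $\varphi|_{y=0} \equiv 0$ and $-\partial_y^a \varphi = f(u(X_0))$. Setting $v := u - u(X_0) - \varphi$, a direct check shows that $v$ is $L_a$-harmonic in $B^+$, $v(X_0) = 0$, and $-\partial_y^a v = g(v)$ on $\partial^0 B^+$, with $g(t) := f(u(X_0) + t) - f(u(X_0)) \in C^{1,\tau}$ and $g(0) = 0$.

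Next, I would apply the scalar version of Proposition~\ref{mono} to $v$: its proof uses only that $v(X_0) = 0$ and that $g$ is $C^{1,\tau}$ with $g(0)=0$, both of which hold. This yields the almost-monotonicity of the Almgren quotient $N_v$. The key additional ingredient, specific to the single-function setting, is the improved lower bound $N_v(X_0, v, 0^+) \geq s$. I would establish this by blow-up classification: any rescaled limit $v_0$ of $v$ at $X_0$ is an $L_a$-harmonic function in $\R^{n+1}_+$ vanishing at the origin and satisfying, in the limit, the homogeneous Neumann condition $\partial_y^a v_0 \equiv 0$ on $\{y=0\}$---indeed, in rescaled coordinates the semilinear boundary term picks up a subcritical factor $r^{2s}$ from $g'(0)$ and disappears. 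Extending $v_0$ evenly across $\{y=0\}$ produces an $L_a$-harmonic function on $\R^{n+1}$ vanishing at the origin; its positive homogeneity is at least $\min(2s, 1) \geq s$, since the lowest positive homogeneities of even-in-$y$, $L_a$-harmonic functions are realized by $y^{2s}$ (degree $2s$) and by the coordinate functions $x_i$ (degree $1$), and the spectrum of the spherical $L_a$-operator with Neumann condition at the equator starts precisely at $\min(2s, 1)$.

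With $N_v(X_0, v, 0^+) \geq s$, the scalar analog of Corollary~\ref{upper E 2s u} yields constants such that
\[
  \frac{1}{r^n}\int_{B_r^+(X_0)} y^a |\nabla v|^2 \, \mathrm{d}X \leq C\, \frac{E_v(R) + H_v(R)}{R^{2s}}
\]
for all $0 < r < R \leq \min(\bar r, \dist{X_0}{\partial^+ B^+})$. I would then return to $u$ using $|\nabla u|^2 \leq 2|\nabla v|^2 + 2|\nabla \varphi|^2$ together with the direct computation $\tfrac{1}{r^n}\int_{B_r^+} y^a |\nabla \varphi|^2 \,\mathrm{d}X \leq C\,|f(u(X_0))|^2 r^{2s}$, which already produces the $R^{2s}$ term in the claim. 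A triangle-inequality bookkeeping---using $v = u - u(X_0) - \varphi$, the pointwise bound $|\varphi(y)| \leq C y^{2s}$, and $|g(t)| \leq C|t|$---bounds $E_v(R) + H_v(R)$ by a constant times $\tfrac{1}{R^{n-1+a}}\int y^a|\nabla u|^2\,\mathrm{d}X + H_u(R) + R^{n+2s}$. Dividing by $R^{2s}$ and using that $H_u(R)/R^{2s} = \tfrac{1}{R^{n+1}}\int_{\partial^+ B_R^+} y^a u^2 \,\mathrm{d}\sigma$ absorbs any leftover $|u(X_0)|^2$ contribution (since $H_u(R) \gtrsim |u(X_0)|^2$ as $R \to 0^+$), I obtain the target inequality.

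The delicate step is Step~2: establishing $N_v(X_0, v, 0^+) \geq s$, which strictly improves the generic $\alpha^* = \min(s, 2s-1)$ bound from Proposition~\ref{mono} in the regime $s > 1/2$. This improvement is characteristic of the single-function setting---the asymmetric blow-ups of frequency $2s-1$ that realize the threshold in the segregated vector-valued case are ruled out here, since $v$ is scalar and its blow-ups must solve a homogeneous Neumann problem whose spectrum starts at $\min(2s,1) \geq s$. The remaining steps are routine but notation-heavy bookkeeping comparing $u$ and $v$.
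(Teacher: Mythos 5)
Your overall architecture coincides with the paper's: up to a sign convention, your auxiliary function $v$ is exactly the paper's $w = u - u(X_0) - \tfrac{1}{1-a}\,y^{1-a}f(u(X_0))$, and the remaining steps (almost-monotonicity of the perturbed Almgren quotient for the shifted function, integration of the frequency lower bound to get the Morrey-type energy estimate, and conversion back to $u$, which produces the $R^{2s}$ term) follow the same lines as the paper. The genuinely different step is the proof of $N_v(X_0,0^+)\geq s$: the paper avoids blow-ups entirely, quoting linear regularity for $w$ ($C^{0,\alpha}$ for every $\alpha<1$ via \cite{sire.terracini.tortone}, or the weaker $C^{0,\alpha}$, $\alpha<2s$, bound of \cite{alassane}) and then running the same $H(r)\leq C_\alpha r^{2\alpha}$ contradiction as in Proposition \ref{mono}; you instead classify blow-up limits as entire even (Neumann) $L_a$-harmonic functions. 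Your route can be made to work, but note two things. First, your spectral justification is wrong as stated: $y^{2s}$ does \emph{not} satisfy the homogeneous Neumann condition ($\partial_y^a y^{2s}=2s\neq 0$; it is the odd/Dirichlet profile), so it does not realize an even homogeneity, and in fact the trace of an entire even $L_a$-harmonic function of polynomial growth is an entire $s$-harmonic function, hence a polynomial, so the bottom of the positive even spectrum is $1$, not $\min(2s,1)$; your bound $\min(2s,1)\geq s$ is still a valid (suboptimal) lower bound, so the conclusion survives, but the reasoning needs repair, and the blow-up route also requires the compactness/homogeneity-of-blow-up machinery and the Pohozaev--Rellich identity for $v$ (true here because $v$ is a genuine solution, but not a consequence of ``$v(X_0)=0$ and $g\in C^{1,\tau}$'' alone, since in Proposition \ref{mono} that identity is an assumption of the class $\mathcal{G}^s$). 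Second, in the final bookkeeping the $\varphi$-contributions to $E_v(R)+H_v(R)$ are of order $R^{4s}$ (not $R^{n+2s}$), and your absorption of $|u(X_0)|^2/R^{2s}$ via $H_u(R)\gtrsim |u(X_0)|^2$ requires a uniform modulus of continuity of $u$ at scale $R$ (available from \cite{alassane}); to your credit, this last point is handled more explicitly in your sketch than in the paper, which passes over the $u(X_0)$ term silently. These are fixable details; otherwise the proposal is sound and, except for the blow-up step, is the paper's argument.
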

\begin{proof}
For a fixed $X_0=(x_0,0)\in \partial^0 B^+$, we define the function $w\in H^{1,a}(B^+_1)$ as
\begin{equation}\label{def funct w}
  w(X) := u(X) - u(X_0) - \frac{1}{1-a}y^{1-a}f(u(X_0)).
\end{equation}
Observe that the function $y^{1-a}$ is an entire $L_a$-harmonic function in $\R^{n+1}_+$ with zero trace on $\{y=0\}$ and constant normal derivative. Thus, from \eqref{nonlinear.lip}, we find that $w$ solves
  \begin{equation}\label{subs}
  \begin{cases}
    -L_a w = 0 &\text{in $B^+$}\\
    -\partial_y^a w = V(x) w &\text{on $\partial^0 B^+$}
  \end{cases}
  \qquad \text{with} \; V(x)= \frac{f(u(x,0))-f(u(X_0))}{u(x,0)-u(X_0)} \in L^\infty(\partial^0 B^+).
\end{equation}
We now show a monotonicity formula of Almgren type for the function $w$. Later we will show how this implies the result of the original function $u$. Following standard computations (see also \cite[Proposition 9.11]{sire.terracini.tortone}), we introduce the functions
\begin{align*}
  E(r) = E(X_0,w,r)
  &=  \frac{1}{r^{n+a-1}}\left[\int_{B_r^+(X_0)}{y^a \abs{\nabla w}^2 \mathrm{d}X} - \int_{\partial^0 B^+_r(X_0)}{V w^2\mathrm{d}x}\right],\\
  H(r) = H(X_0,w,r) & = \frac{1}{r^{n+a}}\int_{\partial^+ B^+_r(X_0)}{y^a w^2 \mathrm{d}\sigma}
\end{align*}
and the associated Almgren quotient
\begin{equation}\label{Almgren.a}
  N(r) =N(X_0,w,r)= \ddfrac{E(X_0,w,r)}{H(X_0,w,r)}
\end{equation}
whenever the denominator $H(r) \neq 0$. We now follow the same strategy as Proposition \ref{mono}. For this reason, we omit some of the details. By Lemma \ref{lemma1}, assuming that $w \neq 0$, we find that there exists a radius $\bar r > 0$ such that if $0 < r < \min(\bar r, \dist{X_0}{\partial^+ B^+})$,
\begin{equation}\label{eqn E+H}
	E(r)+ H(r) \geq \frac12 \frac{1}{r^{n+a-1}} \int_{B_r^+(X_0)}{y^a \abs{\nabla w}^2 \mathrm{d}X}  \geq 0.
\end{equation}
Exploiting the continuity of the function $r \mapsto H(r)$, we can choose an open interval $(r_1,r_2)$, with $r_2 < \min(\bar r, \dist{X_0}{\partial^+B^+})$, such that $H(r) > 0$ and $r \in (r_1,r_2)$. By differentiating the functions $r\mapsto E(r)$ and $r \mapsto H(r)$ and using \eqref{subs}, we find
\begin{align*}
  \frac{d}{dr}E(r) & = \frac{2}{r^{n+a-1}}\int_{\partial^+ B^+_r}{\abs{y}^a (\partial_r w)^2 \mathrm{d}\sigma} + R(r),\\
  \frac{d}{dr}H(r) & = \frac{2}{r^{n+a}}\int_{\partial^+ B^+_r}{\abs{y}^a w\partial_r w \mathrm{d}\sigma},
\end{align*}
where the remainder term $R(r)$ is given by
\[
R(r) =
\frac{2}{r^{n+a}}\int_{\partial^0 B^+_r}{V w \langle x, \nabla w\rangle \mathrm{d}x} - \frac{1-n-a}{r^{n+a}}\int_{\partial^0 B^+_r}{V w^2\mathrm{d}x} - \frac{1}{r^{n-1+a}}\int_{S^{n-1}_r}{V w^2\mathrm{d}x}.
\]
In order to estimate the remainder, we rewrite the first integral in a way that it does not depend on the gradient of $w$. We let $F(s) = \int_0^s f(t) dt$. Since the function $f \in C^{1,\tau}$, we have that there exists a constant $C > 0$ such that
\[
	\left|F(u(X_0) + w)  - F(u(X_0)) - f(u(X_0))w - \frac12 f'(u(X_0))w^2 \right| \leq C |w|^{2+\tau}
\]
and
\[
  \left|V(x)w - f'(u(X_0)) w \right| = \left| f(u(X_0)+w)-f(u(X_0)) - f'(u(X_0)) w \right| \leq C |w|^{1+\tau}.
\]
For notation convenience, let
\[
  T(X) := \frac{F(u(X_0) + w)  - F(u(X_0)) - f(u(X_0))w}{w^2}.
\]
We find
\begin{multline*}
      \int_{\partial^0 B^+_r} V w \langle x , \nabla w \rangle \mathrm{d}x  = \int_{\partial^0 B_r^+}  \langle x , [w V(x)] \nabla w \rangle \mathrm{d}x \\
      = \int_{\partial^0 B_r^+} \langle x , \left[f(u) - f(u(X_0))\right] \nabla u \rangle \mathrm{d}x  = \int_{\partial^0 B_r^+} \langle x , \nabla \left[ F(u) - f(u(X_0))u \right]  \rangle \mathrm{d}x \\
      = \int_{\partial^0 B_r^+} \langle x , \nabla \left[ F(u(X_0)+w) - F'(u(X_0))w \right]  \rangle \mathrm{d}x \\
      = \int_{\partial^0 B_r^+} \langle x , \nabla \left[ T w^2 \right]  \rangle \mathrm{d}x  = r \int_{S^{n-1}_r}{T w^2\mathrm{d}\sigma} - n \int_{\partial^0 B_r^+} T w^2 \mathrm{d}x.
\end{multline*}
As a result, by Lemma \ref{lemma1} and Lemma \ref{lemma2}, we find there exists a numerical constant $C > 0$ and a radius $\bar r > 0$ such that for all $0 < r <\min(\bar r, \dist{X_0}{\partial^+B^+})$, the following estimate holds
\[
  \begin{split}
  \abs{R(r)} &\leq \frac{1}{r^{n+a}}\int_{\partial^0 B^+_r}{(n+a-1) \left|V -2n T \right| w^2 \mathrm{d}x} + \frac{1}{r^{n-1+a}}\int_{S^{n-1}_r}{\left|2T - V\right| w^2\mathrm{d}x} \\
  &\leq C \left(\frac{1}{r^{n+a}} \int_{\partial^0 B_r^+} w^{2} \mathrm{d}x +  \frac{1}{r^{n-1+a}} \int_{S^{n-1}_r}{w^{2+\tau}\mathrm{d}\sigma} \right) \\
  &\leq C r^{-a}(1+\psi'(r))( E(r)+H(r)).
  \end{split}
\]
Here the function $r\mapsto \psi(r)$ stands for the function in Lemma \ref{lem estimates phi psi}, suitably redefined. Therefore, differentiating the Almgren quotient and using the Cauchy-Schwarz inequality on $\partial^+ B^+_r$, we obtain
\begin{align*}
\frac{d}{dr}\widetilde{N}(r) =&\, \ddfrac{\ddfrac{d}{dr}E(r)+\ddfrac{d}{dr}H(r)}{E(r)+H(r)} - \ddfrac{\ddfrac{d}{dr}H(r)}{H(r)} \\
\geq & \frac{2 H(r)}{r^{2n+2a-1}}\left[\int_{\partial^+B^+_r}{\abs{y}^a (\partial_r w)^2\mathrm{d}\sigma}\int_{\partial^+B^+_r}{\abs{y}^a w^2\mathrm{d}\sigma} - \left(\int_{\partial^+ B^+_r}{\abs{y}^a \langle w,\partial_r w\rangle\mathrm{d}\sigma}\right)^2\right]\\
&-C r^{-a}(1+\psi'(r)) \tilde N(r)\\
\geq& -C r^{-a}(1+\psi'(r)) \tilde N(r).
\end{align*}
This, in turn, implies that the function
\[
  r\mapsto e^{C\Psi(r)}(N(r)+1)
\]
is increasing as far as $H(r)\neq 0$, where $\Psi(r)$ as in Lemma \ref{lemma1}. We can now follow closely the proof of Proposition \ref{mono} to show that $H(r) > 0$ for all $0 < r < \min(\bar r, \dist{X_0}{\partial^+ B^+})$. Thus $r \mapsto \tilde N(r)$ is defined for all $r>0$ small enough. Exploiting its monotonicity, we can also define
\[
  N(0) = \lim_{r\to 0} e^{C\Psi(r)}(N(r)+1) - 1.
\]
We now claim that $N(0) \geq s$. Actually, a stronger estimate holds, $N(0) \geq 1$. To prove it, we can replicate the analysis in \cite[Section 9]{sire.terracini.tortone} to show that $w \in C^{0,\alpha}(B^+)$, for every $\alpha \in (0,1)$. This gives the claim, as by the proof of Proposition \ref{mono}. Alternatively, we can show a weaker bound, that is in any case sufficient for our analysis. Indeed, by  \cite[Theorem 4.1]{alassane} we know that $w \in C^{\alpha}(B^+)$ for all $\alpha \in (0,2s)$, and in particular $w \in C^{0,s}(B^+)$ which gives $N(0)\geq s = (1-a)/2$. Anyway, following the same reasoning of Corollary \ref{upper E alpha u}, we find that
\[
  \begin{split}
	\frac{E(r) + H(r)}{r^{2s}} &\leq \frac{E(R) + H(R)}{R^{2s}} \mathrm{exp}\left(C (\Psi(R) - \Psi(r)) + 2(s+1) \int_r^R{\frac{e^{C \rho^{1-a}}-1}{\rho}\mathrm{d}\rho}\right) \\
	&\leq C \frac{E(R) + H(R)}{R^{2s}}
  \end{split}
\]
for a constant $C$ that is independent of $R$ nor $r$. From \eqref{eqn E+H} we infer that there exists yet another constant $C > 0$ such that
\[
  \frac{1}{r^{n}} \int_{B_r^+(X_0)}{y^a \abs{\nabla w}^2 \mathrm{d}X} \leq C \frac{E(R) + H(R)}{R^{2s}}
\]
for all $0 < r < R$. Thus, substituting the definition of $E$ and $H$, exploiting the boundedness of $w$ and Lemma \ref{lem poin}, we find
\begin{equation}\label{eqn mono E w}
	\frac{1}{r^{n}} \int_{B_r^+(X_0)}{y^a \abs{\nabla w}^2 \mathrm{d}X} \leq C \left[\frac{1}{R^n}\int_{B_R^+(X_0)}{y^a \abs{\nabla w}^2 \mathrm{d}X} + \frac{1}{R^{n+1}}\int_{\partial^+ B^+_R(X_0)}{y^a w^2 \mathrm{d}\sigma} \right],
\end{equation}
where $C = C(n, a, \|f\|_{C^{1,\tau}}, \|w\|_{L^\infty})$. From this monotonicity result we now derive the monotonicity formula for the original function $u$. It suffices to go back to the definition of the function $w$ in \eqref{def funct w} and solve in $u$. First we have that
\[
	\frac{1}{r^{n}} \int_{B_r^+(X_0)}{y^a \abs{\frac{1}{1-a}\nabla y^{1-a}}^2 \mathrm{d}X} = \frac{1}{r^{n}} \int_{B_r^+(X_0)} y^{-a} = C r^{1-a} = Cr^{2s}
\]
and
\[
	\frac{1}{r^{n+1}} \int_{\partial^+ B_r^+(X_0)}{y^a \abs{\frac{1}{1-a} y^{1-a}}^2 \mathrm{d}X} = \frac{1}{r^{n+1}} \int_{\partial^+ B_r^+(X_0)} y^{2-a} = C r^{2s}
\]
Thus, substituting the definition of $w$ in \eqref{eqn mono E w}, we find
\[
	\frac{1}{r^{n}} \int_{B_r^+(X_0)}{y^a \abs{\nabla u}^2 \mathrm{d}X} \leq C \left[\frac{1}{R^n}\int_{B_R^+(X_0)}{y^a \abs{\nabla u}^2 \mathrm{d}X} + \frac{1}{R^{n+1}}\int_{\partial^+ B^+_R(X_0)}{y^a u^2 \mathrm{d}\sigma} + R^{2s} \right]
\]
This concludes the proof of Proposition \ref{prp mono E u}.
\end{proof}
%
%

\section{Regularity of the limit profile}\label{section.reg.limit}
In this section we prove the main result on the regularity of the limit profile in $\mathcal{G}^s(B^+_1)$. The proof is based on a contradiction argument, involving, on one hand, a Morrey type inequality suited for the operator $L_a$ and, on the other hand, the energy estimates of the solutions deeply based on the validity of the Almgren monotonicity formulas of the previous section. We start by stating the result.
\begin{proposition}[Theorem \ref{thm main}]\label{prp reg limit profile}
Let $s \in (0,1)$ and $\mathbf{u}\in \mathcal{G}^s(B^+_1)$ be a limit profile. Then $\mathbf{u} \in C^{0,\alpha^*}_\loc(B^+_1)$, where
\[
\alpha^* =
\begin{cases}
  s, & 0<s \leq \frac{1}{2}, \\
  2s-1, & \frac{1}{2} < s < 1.
\end{cases}
\]
\end{proposition}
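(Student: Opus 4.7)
The plan is to reduce the Hölder regularity statement to a uniform Morrey-type growth for the Dirichlet energy
\[
    \int_{B_r^+(X_0)} y^a |\nabla \mathbf{u}|^2 \,\mathrm{d}X \leq C\, r^{n-1+a+2\alpha^*}, \qquad X_0 \in K \subset \subset B_1,\ 0 < r < r_0,
\]
and then apply a Morrey-type embedding adapted to the $A_2$ weight $|y|^a$. Since the measure $y^a\,\mathrm{d}X$ has homogeneous dimension $n+1+a$ on balls centred on $\partial^0 B^+$, the above growth is precisely the weighted Morrey condition corresponding to Hölder exponent $\alpha^*$, and the embedding theorem for $H^{1,a}$ (see e.g.\ \cite{FKS}) will yield $\mathbf{u}\in C^{0,\alpha^*}_{\loc}(\overline{B_1^+})$. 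Establishing the Morrey growth uniformly in $X_0$ is the actual content of the theorem.

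The growth estimate is obtained by dichotomy on the distance $d = \mathrm{dist}(X_0,\Gamma(\mathbf{u}))$. For $X_0 \in \Gamma(\mathbf{u})$ we have $d=0$ and the estimate is exactly Corollary~\ref{upper E alpha u}. For $X_0 \notin \Gamma(\mathbf{u})$ and $r \geq d$, pick a free-boundary point $Y_0$ with $|X_0-Y_0|=d$: then $B_r^+(X_0) \subset B_{2r}^+(Y_0)$ and Corollary~\ref{upper E alpha u} applied at $Y_0$ already gives the bound up to a constant depending only on the ambient compact data $E(R)+H(R)$. For $r < d$ we exploit the appropriate \emph{interior} monotonicity of Section~2:
\begin{itemize}
    \item if $y_0 \geq 2d$, we apply Lemma~\ref{techn.2};
    \item if $X_0 \in \partial^0 B^+$ lies in the trace zero-set of some component $u_i$ (by continuity and segregation, $u_i\equiv 0$ on a full disc $\partial^0 B^+_d(X_0)$), we apply Lemma~\ref{techn.3} to that component;
    \item for the remaining components which satisfy the semilinear Neumann condition at $X_0$, we apply Proposition~\ref{prp mono E u}.
\end{itemize}
Each of these monotonicity results furnishes a decay of the form $\int_{B_r^+(X_0)} y^a|\nabla\mathbf{u}|^2 \lesssim r^{n-1+a+2\beta}$ with $\beta \in \{s,\,2s\}$, hence $\beta \geq \alpha^*$, and with ``initial data'' at the scale $d$ (respectively $d$ and the reference ball at the projected point on $\{y=0\}$). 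Matching the two regimes at scale $r \asymp d$ produces, for $r < d$,
\[
    \int_{B_r^+(X_0)} y^a |\nabla \mathbf{u}|^2 \leq C\,r^{n-1+a+2\beta}\, d^{2(\alpha^*-\beta)} \leq C\,r^{n-1+a+2\alpha^*},
\]
using $r \leq d$ and $\beta \geq \alpha^*$; the extra $R^{2s}$ term appearing in Proposition~\ref{prp mono E u} is absorbed in the same way because $2s \geq 2\alpha^*$. This delivers the uniform Morrey growth, and Step~1 concludes the proof.

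The main obstacle is the matching step, specifically in the regime $X_0 \in \partial^0 B^+ \setminus \Gamma(\mathbf{u})$ with $d\to 0$. One must ensure that the constants appearing in Lemmas~\ref{techn.2}--\ref{techn.3} and Proposition~\ref{prp mono E u}, applied on balls of radius $d$, are dominated by the free-boundary bound at the adjacent point $Y_0\in\Gamma(\mathbf{u})$ \emph{uniformly} in $d$. The cleanest way to implement this is a contradiction/blow-up argument: assuming the Morrey growth fails, rescale around a sequence of points where it is violated with the worst ratio, pass to a limit (using compactness in $H^{1,a}_{\loc}\cap C^{0,\alpha}_{\loc}$ for $\alpha<\alpha^*$, which is free from the a priori quasi-optimal theorem), and obtain an entire profile that violates the Almgren lower bound $N(0^+)\geq \alpha^*$ of Proposition~\ref{mono}, a contradiction. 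This argument, rather than a direct estimate chase, is where the monotonicity formulas of Section~2 are used most essentially.
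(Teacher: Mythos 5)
Your overall strategy is the same as the paper's: reduce the problem to an energy growth estimate at free-boundary points via the interior monotonicity (Lemma \ref{techn.2}), the zero-trace and Neumann monotonicity formulas (Lemma \ref{techn.3}, Proposition \ref{prp mono E u}), conclude at points of $\Gamma(\mathbf{u})$ with Corollary \ref{upper E alpha u}, and finish with a weighted Morrey embedding. However, your Step 1 is mis-normalized, and this is a genuine gap in the regime $s>1/2$. The weighted Campanato criterion (Lemma \ref{morrey}, Corollary \ref{morrey2}) requires, on a ball $B_r(X_0)$ with $y_0>r$, the bound $\int_{B_r(X_0)}\abs{y}^a\abs{\nabla \mathbf{u}}^2\,\mathrm{d}X \leq C\,\abs{y_{\max}}^a\, r^{n-1+2\alpha^*}$, i.e.\ the normalization by $\int_{B_r}\abs{y}^a\,\mathrm{d}X \sim \abs{y_{\max}}^a r^{n+1}$, not by $r^{n+1+a}$. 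Since $r\leq y_0$ gives $r^a\geq y_0^a$ when $a<0$, your target $C\,r^{n-1+a+2\alpha^*}$ is strictly weaker than the required one exactly when $s>1/2$ (where $\alpha^*=2s-1$), so establishing it does not give $C^{0,\alpha^*}$ regularity at points near, but off, $\{y=0\}$. This is precisely why the quotient $\Phi$ in \eqref{phi.morrey} carries the factor $\abs{y_{\max}}^{-a}$ and why Lemma \ref{techn.2} is phrased with the $r^{-(n-a)}$ scaling: the interior energy must be transferred, with that normalization, to a ball centered at the projection $\bar X_0=(x_0,0)$, after which your matching computation (with $\beta\in\{s,2s\}$ and $r\leq d$) is correct. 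Relatedly, your dichotomy condition ``$y_0\geq 2d$'' with $d=\mathrm{dist}(X_0,\Gamma(\mathbf{u}))$ is vacuous, since $\Gamma(\mathbf{u})\subset\{y=0\}$ forces $d\geq y_0$; the first splitting must compare $y_0$ with $\dist{X_0}{\partial^+B^+}$ as in Lemma \ref{techn.2}, and only afterwards, on $\{y=0\}$, does one use the distance to $\Gamma(\mathbf{u})$.

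Your closing suggestion to resolve the uniformity issue by a blow-up producing an entire profile violating $N(0^+)\geq\alpha^*$ is both unnecessary and, as stated, unjustified. No limit is needed: Corollary \ref{upper E alpha u} is already uniform in $X_0\in\Gamma(\mathbf{u})\cap K$, since its right-hand side involves $E(R)+H(R)$ at a macroscopic radius $R$ bounded below on $K$, and these are controlled by $\norm{\mathbf{u}}{H^{1,a}}$ and $\norm{\mathbf{u}}{L^\infty}$; the trace terms appearing in the matching are absorbed via the Poincar\'e inequalities of Lemma \ref{lem poin}. On the other hand, a blow-up centered at points off $\Gamma(\mathbf{u})$ with freely chosen scales need not converge to an element of $\mathcal{G}^s$ (one would have to pass the Poho\v{z}aev identities and the boundary conditions to the limit), so Proposition \ref{mono} cannot be invoked for the limit without substantial extra work. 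The paper's proof is exactly your direct estimate chase, written contrapositively for the correctly normalized quotient $\Phi$, and it closes without any blow-up.
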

Moreover, under a stronger assumption (see Corollary \ref{upper E 2s u}), we can sharpen the result of Proposition \ref{prp reg limit profile}. This is done by emphasizing the role of the Almgren quotient on the free-boundary $\Gamma(\mathbf{u})$. We have
\begin{corollary}\label{cor reg limit profile}
Let $s \in (0,1)$ and $\mathbf{u}\in \mathcal{G}^s(B^+_1)$ be a limit profile. If for every compact $K\subset B_1$ we have
\begin{equation}\label{almgren.bound}
  \inf_{X_0 \in \Gamma(\mathbf{u})\cap K} N(X_0,\mathbf{u},0^+) \geq s,
\end{equation}
then $\mathbf{u} \in C^{0,s}_\loc(B_1^+)$.
\end{corollary}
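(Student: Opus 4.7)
The plan is to parallel the argument leading to Proposition \ref{prp reg limit profile} (Theorem \ref{thm main}), upgrading only the energy estimate on the free-boundary by using Corollary \ref{upper E 2s u} in place of Corollary \ref{upper E alpha u}. The goal is to prove that, for every compact $K \subset B_1$, there exist constants $\bar r, C > 0$ such that
\begin{equation}\label{eqn target morrey cor}
	\frac{1}{r^n} \int_{B_r(X_0) \cap B_1^+} y^a \abs{\nabla \mathbf{u}}^2 \mathrm{d}X \leq C
\end{equation}
for every $X_0 \in \overline{K \cap B_1^+}$ and every $0 < r < \bar r$, and then to conclude via the Morrey-type embedding for the weighted Sobolev space $H^{1,a}$. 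Indeed, the homogeneous dimension of the $A_2$-weight $y^a$ on $\R^{n+1}$ is $n+1+a = n+2-2s$, so the scaling $r^n$ in \eqref{eqn target morrey cor} corresponds exactly to the H\"older exponent $\alpha = s$, since $n = (n+2-2s) - 2 + 2s$.

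To establish \eqref{eqn target morrey cor} I distinguish three cases according to the position of $X_0$, following the trichotomy of Section \ref{section.Almgren}. If $X_0 \in \Gamma(\mathbf{u})$, the additional hypothesis \eqref{almgren.bound} provides $N(X_0, \mathbf{u}, 0^+) \geq s$, so Corollary \ref{upper E 2s u} yields the bound directly. If $X_0 \in \partial^0 B_1^+ \setminus \Gamma(\mathbf{u})$, the continuity of $\mathbf{u}$ combined with the segregation condition $u_i u_j\lvert_{y=0} \equiv 0$ implies that in a small neighborhood of $X_0$ along $\partial^0 B_1^+$ exactly one component, say $u_i$, is nonzero while the traces of the remaining components vanish identically. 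For this $u_i$ the boundary condition in \eqref{eq} reduces locally to the semilinear Neumann problem \eqref{nonlinear.lip}, so Proposition \ref{prp mono E u} supplies the required $r^n$-decay (the additive $R^{2s}$ being harmless at a fixed $K$-dependent scale $R$); for every other component Lemma \ref{techn.3} provides the even stronger $r^{n+1-a}$-decay. Finally, for interior points $X_0 = (x_0, y_0) \in B_1^+$ with $y_0 > 0$, each component is locally $L_a$-harmonic and Lemma \ref{techn.2} either controls the energy on $B_r(X_0)$ by that on a fixed-radius ball still contained in $B^+$, or it transfers the estimate to the projected point $\bar X_0 = (x_0, 0) \in \partial^0 B_1^+$, which has been treated in one of the previous two cases.

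The main technical obstacle I foresee is ensuring that the constants appearing in Corollary \ref{upper E 2s u}, Proposition \ref{prp mono E u} and the two transfer lemmas are uniform in $X_0 \in K$. This reduces to controlling, at a single $K$-dependent radius $R > 0$, the frequency $N(X_0, \mathbf{u}, R)$ and the spherical energy $H(X_0, \mathbf{u}, R)$. The compactness of $K$ in $B_1$ together with the continuous dependence of these functionals on the base point, exactly as in the proof of Proposition \ref{prp reg limit profile}, reduce this step to a routine bookkeeping argument. Once \eqref{eqn target morrey cor} is established with a uniform constant, the weighted Morrey inequality for $L_a$ closes the argument and yields $\mathbf{u} \in C^{0,s}_\loc(B_1^+)$.
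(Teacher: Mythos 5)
Your selection of ingredients is the right one, and indeed the paper proves this corollary simply by rerunning the proof of Proposition \ref{prp reg limit profile} with $\alpha^*$ replaced by $s$, invoking Corollary \ref{upper E 2s u} in place of Corollary \ref{upper E alpha u} at the final step (in contradiction form rather than as a direct estimate, which is an inessential difference). The genuine gap is in your concluding step. The unweighted target bound $\frac{1}{r^{n}}\int_{B_r(X_0)\cap B_1^+}y^a|\nabla\mathbf{u}|^2\,\mathrm{d}X\le C$ does not yield $C^{0,s}$ through the weighted Morrey embedding precisely in the only case where the corollary improves on Theorem \ref{thm main}, namely $s\in(1/2,1)$, i.e.\ $a<0$. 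The hypothesis of Corollary \ref{morrey2} with $\lambda=s$ reads $\int_{B_r(X)}|y|^a|\nabla\mathbf{u}|^2\,\mathrm{d}X\le C\,r^{n-a}|y_{\max}|^a$ (recall $n-1+2s=n-a$), and at interior points $X=(x,y_0)$ with $r\ll y_0$ one has $|y_{\max}|\sim y_0$, so for $a<0$ the required right-hand side satisfies $r^{n-a}y_0^{a}=(y_0/r)^a\,r^n\ll r^n$: your bound is strictly weaker than what the lemma needs. The ``homogeneous dimension $n+1+a$'' heuristic is exactly what fails: $\int_{B_r(X)}|y|^a\,\mathrm{d}X\sim r^{n+1}|y_{\max}|^a$, which is of order $r^{n+1+a}$ only when $|y_{\max}|\sim r$, i.e.\ near $\{y=0\}$; away from $\{y=0\}$ the weight is not Ahlfors regular of that dimension, and a decay of order $r^n$ for the weighted Dirichlet energy there only encodes $C^{0,1/2}$-type oscillation, not $C^{0,s}$ with $s>1/2$.

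The repair is to carry the weighted quotient \eqref{phi.morrey} with $\alpha^*$ replaced by $s$, i.e.\ $\Phi(X_0,r)=|y_{\max}|^{-a}\,r^{-(n-a)}\int_{B_r(X_0)}|y|^a|\nabla\mathbf{u}|^2\,\mathrm{d}X$, throughout. At points of $\{y=0\}$ this coincides with your quotient $r^{-n}\int$, so your use of Corollary \ref{upper E 2s u}, Proposition \ref{prp mono E u} and Lemma \ref{techn.3} is unchanged; at interior points, Lemma \ref{techn.2} transfers $B_r(X_0)$, $r\le y_0/2$, to $B_{y_0/2}(\bar{X}_0)$ and the extra factor $|(y_0+r)/(y_0/2)|^{-a}$ is bounded, which is precisely the bookkeeping carried out in the proof of Proposition \ref{prp reg limit profile} (there one also needs $1-a-2\alpha^*\ge 0$, which with $\alpha^*=s$ holds with equality). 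A second, smaller point: at points off $\Gamma(\mathbf{u})$ the scale at which Proposition \ref{prp mono E u} and Lemma \ref{techn.3} apply is comparable to $\mathrm{dist}(X_0,\Gamma(\mathbf{u}))$, which is not bounded below on $K$; passing from that scale to a fixed one goes through a nearby point of $\Gamma(\mathbf{u})$ and produces the boundary terms $\frac{1}{R^{n+1}}\int_{\partial^+B^+_R}y^a\mathbf{u}^2\,\mathrm{d}\sigma$, controlled via Lemma \ref{lem poin} — this is exactly why Corollaries \ref{upper E alpha u} and \ref{upper E 2s u} include that term. So this uniformity step is not mere routine bookkeeping; it is the middle portion of the proof of Proposition \ref{prp reg limit profile} and should be invoked explicitly, after which your argument coincides with the paper's.
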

Let us explain the assumption of Corollary \ref{cor reg limit profile}, which may seem arbitrary at first. The value of the Almgren quotient at a free-boundary point $X_0 \in \Gamma(\mathbf{u})$ is equal to the homogeneity degree of suitable blow-up limits of the function $\mathbf{u}$ around such point $X_0$, which, in turns, can be bounded from below by smallest growth at infinity of one-dimensional homogeneous function belonging to the $\mathcal{G}^s$ class (for a detailed derivation, see \cite[Section 7]{tvz1}, \cite[Section 3.4]{aletesi} and \cite[Section 1.6]{tesi}). In particular, exploiting the behavior of the fundamental solution of the fractional Laplacian in dimension one, it is possible to prove the following dichotomy:
\begin{itemize}
    \item either $N(X_0,\mathbf{u},0^+) = 2s-1$, in which case $s \in (1/2,1)$ and there exist $r > 0$ and $i \in \{1, \dots, k\}$ such that $\partial^0 B_r(X_0) \setminus \Gamma(\mathbf{u}) = \partial^0 B_r(X_0) \cap \{u_i > 0\}$ (a case we named \emph{self-segregation});
    \item or $N(X_0,\mathbf{u},0^+) \geq s$ and all the blow-up limit at $X_0$ contains at least two one non-zero components (thus the free-boundary separates at the limit, the support of the trace of at least two densities).
\end{itemize}
Hence, in the same spirit of \cite[Section 1.6]{tesi}, the absence of self-segregation equates to $N(X_0, \mathbf{u}, 0^+) \geq  s$ which, in turns, gives us that the densities are actually $C^{0,s}$ regular, the optimal regularity for this kind of problem.

We start by introducing a Morrey type inequality tailor made for the operator $L_a$.
\begin{lemma}\label{morrey}
Let $u \in H^{1,a}(B)$ and fix a compact set $K \subset B$. Assume that there exist constants $\lambda \in (0,1)$ and  $C > 0$ such that
\[
  \int_{B_r(X')} |y|^a |\nabla u|^2 \mathrm{d}X \leq C r^{2(\lambda-1)} \int_{B_r(X')} |y|^a \mathrm{d}X.
\]
for any $X' \in K$ and $0 < r < \dist{X'}{\partial B}$. Then $u \in C^{0,\lambda}(K)$.
\end{lemma}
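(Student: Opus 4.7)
The plan is to establish a weighted Campanato-type characterization of Hölder continuity adapted to the Muckenhoupt weight $\abs{y}^a$. Throughout, I will write $\mathrm{d}\mu = \abs{y}^a\mathrm{d}X$, which for $a\in(-1,1)$ is a Muckenhoupt $A_2$ weight on $\R^{n+1}$; this gives me two crucial ingredients, valid uniformly for balls that may or may not touch $\{y=0\}$: the doubling property $\mu(B_{2r}(X'))\leq C_d\,\mu(B_r(X'))$, and the weighted Poincaré inequality of Fabes--Kenig--Serapioni
\[
  \int_{B_r(X')}\abs{u-u_{X',r}}^2\mathrm{d}\mu \leq C\,r^2\int_{B_r(X')}\abs{\nabla u}^2\mathrm{d}\mu,\qquad u_{X',r}:=\frac{1}{\mu(B_r(X'))}\int_{B_r(X')}u\,\mathrm{d}\mu,
\]
valid for every $B_r(X')\subset B$.

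Combining this with the hypothesis of the lemma, for every $X'\in K$ and every $r<r_0:=\dist{K}{\partial B}$ I obtain the weighted Campanato-type estimate
\[
  \int_{B_r(X')}\abs{u-u_{X',r}}^2\mathrm{d}\mu \leq C\,r^{2\lambda}\,\mu(B_r(X')).
\]
The next step is a standard dyadic telescoping. Writing
\[
  \abs{u_{X',r}-u_{X',2r}}^2 \leq \frac{2}{\mu(B_r(X'))}\int_{B_r(X')}\!\left(\abs{u-u_{X',r}}^2+\abs{u-u_{X',2r}}^2\right)\mathrm{d}\mu \leq \frac{C\,r^{2\lambda}\mu(B_{2r}(X'))}{\mu(B_r(X'))} \leq C'\,r^{2\lambda},
\]
where I used doubling in the last step, I deduce that $\{u_{X',2^{-k}r_0}\}_{k\geq 0}$ is Cauchy. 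Its limit, call it $\tilde u(X')$, satisfies
\[
  \abs{\tilde u(X')-u_{X',r}} \leq C\,r^{\lambda}\qquad\text{for every } X'\in K,\ 0<r<r_0,
\]
and by Lebesgue differentiation with respect to the doubling measure $\mu$ one has $\tilde u=u$ $\mu$-a.e.

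It remains to compare points. Given $X',X''\in K$ with $\rho:=\abs{X'-X''}<r_0/4$, the ball $B_\rho(X'')$ is contained in $B_{2\rho}(X')\cap B_{2\rho}(X'')$ and has $\mu$-measure comparable to that of either larger ball by doubling. Integrating the identity $u_{X',2\rho}-u_{X'',2\rho}=(u_{X',2\rho}-u)+(u-u_{X'',2\rho})$ on $B_\rho(X'')$ and applying Cauchy--Schwarz,
\[
  \abs{u_{X',2\rho}-u_{X'',2\rho}}^2 \leq \frac{C}{\mu(B_\rho(X''))}\!\left[\int_{B_{2\rho}(X')}\!\abs{u-u_{X',2\rho}}^2\mathrm{d}\mu+\int_{B_{2\rho}(X'')}\!\abs{u-u_{X'',2\rho}}^2\mathrm{d}\mu\right] \leq C\,\rho^{2\lambda}.
\]
Combined with the two estimates $\abs{\tilde u(X')-u_{X',2\rho}}+\abs{\tilde u(X'')-u_{X'',2\rho}}\leq C\rho^\lambda$, the triangle inequality yields $\abs{\tilde u(X')-\tilde u(X'')}\leq C\rho^\lambda$, which is the desired conclusion.

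The only non-routine point is the invocation of the weighted Poincaré inequality and doubling on balls that may straddle the characteristic set $\{y=0\}$; this works uniformly precisely because $\abs{y}^a$ lies in the $A_2$ class for $a\in(-1,1)$. Once this is granted, the remainder is a direct adaptation of the classical Campanato characterization of Hölder spaces to the weighted setting, and so I do not expect any serious obstacle.
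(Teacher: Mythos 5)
Your proof is correct, and it reaches the conclusion by a route that differs from the paper's in how the Campanato-type information is converted into H\"older continuity. Both arguments start from the same key ingredient, the Fabes--Kenig--Serapioni weighted Poincar\'e inequality, which combined with the hypothesis gives the Campanato-type bound $\int_{B_r(X')}\abs{u-c_{X',r}}^2\abs{y}^a\,\mathrm{d}X\leq Cr^{2\lambda}\int_{B_r(X')}\abs{y}^a\,\mathrm{d}X$. The paper then uses the $A_2$ condition directly (a Cauchy--Schwarz with $\abs{y}^{a/2}\abs{y}^{-a/2}$) to pass from the weighted $L^2$ oscillation to the \emph{unweighted} $L^1$ oscillation, placing $u$ in the classical Campanato space $\mathcal{L}^{1,n+1+\lambda}(K)$ and concluding by citing Campanato's embedding theorem as a black box. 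You instead stay entirely in the weighted setting and reprove the Campanato-to-H\"older implication for the measure $\mathrm{d}\mu=\abs{y}^a\mathrm{d}X$ from scratch: dyadic telescoping of the $\mu$-averages, doubling to compare averages at nearby centers, and Lebesgue differentiation for the doubling measure to identify the limit with $u$. What your approach buys is self-containedness and generality -- it only uses that $\mu$ is doubling and supports a Poincar\'e inequality, so it would extend verbatim to other such weights -- at the cost of redoing a textbook argument; the paper's approach is shorter because it reduces to the unweighted theory, but it leans on the $A_2$ inequality in a slightly less transparent way. The only loose ends in your write-up are routine: the estimate $\abs{\tilde u(X')-u_{X',r}}\leq Cr^{\lambda}$ should be stated for all radii (one extra comparison from the nearest dyadic scale), the case $\abs{X'-X''}\gtrsim r_0$ requires the easy uniform bound on $\tilde u$ over $K$, and the identification $\tilde u=u$ a.e.\ uses that $\mu$ and Lebesgue measure are mutually absolutely continuous; none of these affects the validity of the argument.
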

\begin{proof}
We start by recalling a Poincar\'e-type inequality due to Fabes, Kenig and Serapioni \cite[Theorem 1.5]{FKS}: there exists $C = C(n,a) > 0$ such that, for any $u \in H^{1,a}(B_r(X'))$, $B_r(X') \subset B$, the following inequality holds
\[
  \int_{B_r(X')} |y|^a |u-\overline{u_{B_r}}|^2 \mathrm{d}X \leq C r^2 \int_{B_r(X')} |y|^a |\nabla u|^2 \mathrm{d}X
\]
where $\overline{u_{B_r}}$ is the average of $u$ in the ball $B_r(X')$, that is
\[
  \overline{u_{B_r}} = \frac{1}{|B_r(X')|} \int_{B_r(X')} u \mathrm{d}X.
\]
From the assumption we deduce that
\[
	\int_{B_r(X')} |y|^a |u-\overline{u_{B_r}}|^2 \mathrm{d}X \leq C r^{2\lambda} \int_{B_r(X')} |y|^a\mathrm{d}X.
\]
We now recall that the function $X \mapsto |y|^a$ is an $A_2$-weight: there exists a constant $C > 0$ such that
\[
  \frac{1}{|B|} \int_{B}|y|^a \mathrm{d}X \cdot \frac{1}{|B|} \int_{B}|y|^{-a} \mathrm{d}X \leq C \qquad \text{for any ball $B \subset \R^{n+1}$}.
\]
It follows that
\begin{multline*}
  \int_{B_r(X')}|y|^a  \left(\frac{1}{|B_r(X')|} \int_{B_r(X')} |u - \overline{u_{B_r}}|\mathrm{d}X \right)^2 \mathrm{d}X\\
  \leq  \int_{B_r(X')}|y|^a \left(\frac{1}{|B_r(X')|} \int_{B_r(X')} |u - \overline{u_{B_r}}| |y|^{\frac{a}{2}} |y|^{-\frac{a}{2}} \mathrm{d}X\right)^2 \mathrm{d}X \\
  \leq \left(\frac{1}{|B_r(X')|} \int_{B_r(X')}|y|^a \mathrm{d}X \cdot \frac{1}{|B_r(X')|} \int_{B_r(X')}|y|^{-a} \mathrm{d}X\right) \int_{B_r(X')} |y|^a |u - \overline{u_{B_r}}|^2 \mathrm{d}X \\\leq C \int_{B_r(X')} |y|^a |u - \overline{u_{B_r}}|^2\mathrm{d}X .
\end{multline*}
Combining the two previous estimates, we obtain that there exists a constant $C > 0$ such that
\[
  \frac{1}{|B_r(X')|} \int_{B_r(X')} |u - \overline{u_{B_r}}|  \mathrm{d}X\leq C r^{\lambda}
\]
that is
\[
  \sup_{X' \in K, r > 0}\frac{1}{r^{n + 1 + \lambda}} \int_{B_r(X') \cap K} |u - \overline{u_{B_r}}| \mathrm{d}X\leq C.
\]
Thus $u$ belongs to the Campanato space $\mathcal{L}^{1, n + 1 + \lambda}(K)$. By Campanato's embeddings \cite[Theorem 1.17]{troianiello}, we find that $u \in C^{0,\lambda}(K)$.
\end{proof}

\begin{corollary}\label{morrey2}
Let $a \in (-1,1)$. Let $X_0 \in \R^{n+1}$, $u \in H^{1,a}(B(X_0))$ and fix a compact set $K \subset B(X_0)$. Assume that there exist constants $\lambda \in (0,1)$ and $C > 0$ such that
\[
  \frac{1}{r^{n-1+2\lambda}} \int_{B_r(X)}  |y|^a |\nabla u|^2 \mathrm{d}X \leq C |y_{\max}|^a
\]
for any $X \in K$ and $0 < r < \dist{X}{\partial B(X_0)}$. Here $|y_{\max}| = \sup\{|y| : (x,y) \in B_r(X)\}$. Then $u \in C^{0,\lambda}(K)$.
\end{corollary}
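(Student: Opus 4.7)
The plan is to reduce the statement to Lemma \ref{morrey} by establishing the two-sided comparison
\begin{equation*}
c\, r^{n+1}|y_{\max}|^a \;\leq\; \int_{B_r(X)}|y|^a\,\mathrm{d}X \;\leq\; C\, r^{n+1}|y_{\max}|^a,
\end{equation*}
with constants $c,C>0$ depending only on $n$ and $a$. Once this is in hand, the hypothesis of the corollary combined with the right-hand inequality gives
\[
\int_{B_r(X)} |y|^a|\nabla u|^2\,\mathrm{d}X \;\leq\; C\, r^{n-1+2\lambda}|y_{\max}|^a \;\leq\; C'\, r^{2(\lambda-1)}\int_{B_r(X)}|y|^a\,\mathrm{d}X,
\]
which is exactly the assumption required in Lemma \ref{morrey}, yielding $u \in C^{0,\lambda}(K)$.

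To prove the comparison, by the symmetry $y\mapsto -y$ I may assume $X=(x_0,y_0)$ with $y_0\geq 0$, so $|y_{\max}| = y_0+r$. I would then split into two regimes. First, if $y_0 \geq 2r$, every $(x,y) \in B_r(X)$ satisfies $y \in [y_0-r,y_0+r] \subset [y_0/2,\,3y_0/2]$, so $|y|$ is comparable to $|y_{\max}| \sim y_0$ uniformly on $B_r(X)$; the two-sided bound then follows immediately from the volume of the ball. Second, if $y_0 < 2r$, then $|y_{\max}| \sim r$, and the rescaling $X = (x_0,0) + r(\xi,\eta)$ reduces the claim to showing that the map
\[
\eta_0 \;\mapsto\; \int_{B_1(0,\eta_0)} |\eta|^a\,\mathrm{d}\xi\,\mathrm{d}\eta
\]
is bounded above and below by positive constants on the compact interval $\eta_0 \in [0,2]$. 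This follows from its continuity (by dominated convergence, which uses $a > -1$ for local integrability of $|\eta|^a$) and from its strict positivity on that interval.

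The only delicate point is the second regime, where $B_r(X)$ crosses the weight-degenerate set $\{y=0\}$; the rescaling-and-continuity argument sketched above is the cleanest way to handle it, though one could equivalently invoke the fact that $|y|^a$ is an $A_2$ Muckenhoupt weight (as already used in the proof of Lemma \ref{morrey}) together with its doubling property. In either approach, all bounds depend only on $n$ and $a$, and so the constants in the reduced Morrey-type hypothesis are uniform in $X\in K$ and $r$, as required to invoke Lemma \ref{morrey}.
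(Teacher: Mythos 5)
Your proposal is correct and follows essentially the same route as the paper: both reduce the corollary to the two-sided comparison $c\,r^{n+1}|y_{\max}|^a \leq \int_{B_r(X)}|y|^a\,\mathrm{d}X \leq C\,r^{n+1}|y_{\max}|^a$ and then invoke Lemma \ref{morrey}, with the comparison checked via translation/reflection/scaling invariance plus continuity and positivity. The only (cosmetic) difference is that the paper treats all ratios $t = y_0/r \geq 0$ at once by a one-dimensional statement using the asymptotics as $t \to +\infty$, whereas you split explicitly into the regimes $y_0 \geq 2r$ and $y_0 < 2r$.
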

\begin{proof}
It suffices to observe that there exist constants $C_1, C_2 > 0$ such that
\[
  C_1 r^{n+1} |y_{max}|^a \leq \int_{B_r(X)} |y|^a \mathrm{d}X\leq C_2 r^{n+1}|y_{max}|^a.
\]
To show these inequalities, we observe just that by invariance under scaling, translation in $x$ and reflection in $y$, the claim is equivalent to
\[
  C_1 |t+1|^a \leq \int_{t-1}^{t+1} |y|^a \mathrm{d}y\leq C_2 |t+1|^a
\]
for all $t \geq 0$. These inequalities are now immediate since the functions involved are continuous in $t$, strictly positive for $t \geq 0$ and of the same order when $t \to +\infty$. 
\end{proof}

\begin{proof}[Proof of Proposition \ref{prp reg limit profile}]
As anticipated at the beginning of this section, the proof of this result is based on a contradiction argument, involving the Morrey's type inequality of Corollary \ref{morrey2} and the energy estimates of Section \ref{section.Almgren}. Here we show that the solution $\mathbf{u} \in C^{0,\alpha^*}(\overline{B_{1/2}^+})$. Standard covering arguments allow to show that $\mathbf{u} \in C^{0,\alpha^*}(K)$ for any a compact $K\subset B$. Our strategy is the following: first of all, we prove that if a suitable Morrey quotient is unbounded, it must necessarily be unbounded when computed on point of the free-boundary $\Gamma(\mathbf{u})$. Then, we show that the quotient is bounded on $\Gamma(\mathbf{u})$, which will imply the global boundedness of the quotient, and thus the Proposition.

We introduce some notation. For any $X_0 \in \overline{B^+}$ and $r > 0$, we define the Morrey quotient
\begin{equation}\label{phi.morrey}
\Phi(X_0,r)= \frac{\abs{y_{\max}}^{-a}}{r^{n-1+2\alpha^*}}\int_{B_r(X_0)}{\abs{y}^a \abs{\nabla \mathbf{u}}^2 \mathrm{d}X},
\end{equation}
where $|y_{\max}| = \sup\{|y| : (x,y) \in B_r(X_0)\}$. Based on Corollary \ref{morrey2}, we assume by contradiction that there exists a sequence $X_n \in \overline{B_{1/2}^+}$ and $r_n \in (0,1/2)$ such that
\[
  \Phi(X_n,r_n) \to +\infty.
\]
We denote from now on $X_n = (x_n,y_n)\in \R^{n+1}$ with $x_n \in \R^n$ and $y_n \in \R$, and similarly $X'_n = (x_n', y_n')$.

We first show that if $\Phi(X_n,r_n) \to +\infty$, then $\Phi(X'_n,R_n) \to +\infty$ with $X'_n$ on $\partial^0 B_{1/2}^+$ and $R_n \to 0$. Indeed, since $\mathbf{u}\in H^{1,a}(B^+)$, it must be the case that $r_n\to 0$. Moreover, by Lemma \ref{techn.2}, we can always assume that $y_n =0$. Indeed, if we assume that $y_n\geq 2 \mathrm{dist}(X_n,\partial^+ B^+)$, by \eqref{2.13.part1} we get
\begin{align*}
\Phi(X_n,r_n) &= r_n^{1-a-2\alpha^*} \frac{\abs{y_n+r_n}^{-a} }{r^{n-a}_n}\int_{B_{r_n}(X_n)}{\abs{y}^a \abs{\nabla \mathbf{u}}^2 \mathrm{d}X} \\
&\leq C R_n^{1-a-2\alpha^*}  \frac{\abs{y_n+R_n}^{-a} }{R^{n-a}_n}\int_{B_{R_n}(X_n)}{\abs{y}^a \abs{\nabla \mathbf{u}}^2 \mathrm{d}X} = C\Phi(X_n,R_n),
\end{align*}
where $1-a\geq 2\alpha^*$ and $R_n= \mathrm{dist}(X_n,\partial^+ B^+) \geq 1/2$, and the right hand side is bounded uniformly since $\mathbf{u}\in H^{1,a}(B_1^+)$. As a result, we find that necessarily $y_n\leq 2 \mathrm{dist}(X_n,\partial^+ B^+)$. Thus, by \eqref{2.13.part2}, we obtain
\begin{align*}
\Phi(X_n,r_n) &= r_n^{1-a-2\alpha^*} \frac{\abs{y_n+r_n}^{-a} }{r^{n-a}_n}\int_{B_{r_n}(X_n)}{\abs{y}^a \abs{\nabla \mathbf{u}}^2 \mathrm{d}X} \\
&\leq C R_n^{1-a-2\alpha^*}  \frac{\abs{y_n+R_n}^{-a} }{R^{n-a}_n}\int_{B_{R_n}(\bar{X}_n)}{\abs{y}^a \abs{\nabla \mathbf{u}}^2 \mathrm{d}X}\\
&\leq C \abs{\frac{y_n}{R_n}+1}^{-a} \Phi(\bar{X}_n,R_n) \leq C \Phi(\bar{X}_n,R_n),
\end{align*}
where $R_n = y_n/2$. As a result, if $\Phi(X_n,r_n)$ is unbounded, so must be $\Phi(\bar{X}_n,R_n)$, with $\bar X_{n} = (\bar x_n, 0)$ and $R_n \to 0$. 

Next, we prove that if the Morrey quotient $\Phi(X_n,r_n) = \Phi((x_n, 0),r_n)$ is unbounded, it must be unbounded for a sequence of point $(X_n)_n \subset \Gamma(\mathbf{u}) \cap \partial^0 B_{1/2 + o_n(1)}$. Indeed, let us assume that $R_n = \mathrm{dist}(X_n,\Gamma(\mathbf{u}))>0$. Up to a relabelling, we have that
\[
 \begin{cases}
  -L_a u_1=0 &\mbox{in } B^+_{R_n}\\
  -\partial_y^a u_1 = f_1(u_1) & \mbox{on } \partial^0 B^+_{R_n}
  \end{cases} \qquad\text{while}\qquad
  \begin{cases}
  -L_a u_j =0 &\mbox{in } B^+_{R_n}\\
  u_j=0 & \mbox{on } \partial^0 B^+_{R_n}
  \end{cases} \quad
\text{for $j\neq 1$}.
\]
We now reason separately for the density $u_1$ and the densities $u_j$ for $j \neq 1$. For the density $u_1$, by Proposition \ref{prp mono E u}, we have for any $0 < r < {R_n}$,
\[
  \begin{split}
	\frac{1}{r^{n}} \int_{B_r^+(X_n)}{y^a \abs{\nabla u_1}^2 \mathrm{d}X}&\leq C \left[\frac{1}{R_n^n}\int_{B_{R_n}^+(X_n)}{y^a \abs{\nabla u_1}^2 \mathrm{d}X} + \frac{1}{{R_n^{n+1}}}\int_{\partial^+ B^+_{R_n}(X_n)}{y^a u_1^2 \mathrm{d}x} + {R_n}^{2s} \right]\\
	&\leq C \left[\frac{1}{{R_n^n}}\int_{B_{R_n}^+(X_n)}{y^a \abs{\nabla u_1}^2 \mathrm{d}X} + \frac{1}{{R_n^{n+1-a}}}\int_{\partial^0 B^+_{R_n}(X_n)}{u_1^2 \mathrm{d}x} + {R_n}^{2s} \right],
  \end{split}
\]
where for the second inequality we have used the Poincar\'e inequality \eqref{poincare2} in Lemma \ref{lem poin} in order to estimate the boundary contribution on $\partial^+ B^+_R$ with a contribution on $\partial^0 B^+_R$. We then consider the densities $u_j$. Lemma \ref{techn.3}, when applied to each component separately, yields
\[
  \frac{1}{r^{n+1-a}} \sum_{j\neq 1}\int_{B_r^+(X_n)}{ y^a |\nabla u_j|^2 \mathrm{d}X} \leq \frac{1}{R_n^{n+1-a}} \sum_{j\neq 1}\int_{B_{R_n}^+(X_n)}{ y^a |\nabla u_j|^2\mathrm{d}X}
\]
for any $0 < r < {R_n}$.
Thus, by summing the two inequalities and recalling the definition of the Morrey quotient \eqref{phi.morrey}, we obtain

\begin{align*}
\Phi(X_n,r_n) & = r^{1-a-2\alpha^*}\left(\frac{1}{r^n}\int_{B_r(X_0)}{\abs{y}^a \abs{\nabla u_1}^2 \mathrm{d}X}+ \frac{r^{1-a}}{r^{n+1-a}}\sum_{j\neq 1}\int_{B_r(X_0)}{\abs{y}^a \abs{\nabla u_j}^2 \mathrm{d}X}\right)\\
&\leq C\left(\Phi(X_n,R_n) + \frac{1}{R_n^{n+2\alpha^*}}\int_{\partial^0 B^+_R(X_n)}{\mathbf{u}^2\mathrm{d}x}+R_n^{4s-2\alpha^*}\right),
\end{align*}
with $R_n \leq \min\{\overline{r}, \dist{X_n}{\partial^+B^+},\mathrm{dist}(X_n, \Gamma(\mathbf{u}))\}$. Here we used the fact that $2\alpha^* \leq 1-a$, i.e. $\alpha^* \leq s$. Recalling that $\mathbf{u} \in H^{1,a}(B^+)$ and, in particular, $\mathbf{u} \in L^{2}(\partial^0 B^+)$, we can assume that $R_n = \mathrm{dist}(X_n, \Gamma(\mathbf{u})) \leq \min\{\overline{r}, \dist{X_n}{\partial^+B^+}\}$. Consequently, we find
\[
  \begin{split}
\Phi(X_n,r_n) &\leq C\left(\Phi(\bar{X}_n,2R_n) + \frac{1}{(2R_n)^{n+2\alpha^*}}\int_{\partial^0 B^+_{2R_n}(\bar{X}_n)}{\mathbf{u}^2\mathrm{d}x}+ R_n^{4s-2\alpha^*}\right)\\
  &\leq C\left(\Phi(\bar{X}_n,2R_n) + \frac{1}{(2R_n)^{n+a+2\alpha^*}}\int_{\partial^+ B^+_{2R_n}(\bar{X}_n)}{y^a \mathbf{u}^2\mathrm{d}x}+ 1\right)
  \end{split}
\]
where $(\overline{X}_n)_n \subset \Gamma(\mathbf{u}) \cap B_{1/2 + o_n(1)}$ is a sequence of points such that $\mathrm{dist}(X_n,\Gamma(\mathbf{u}))=\abs{X_n-\overline{X}_n}$. Here, in order to obtain the second inequality, we have used again the Poincar\'e inequality \eqref{poincare} in Lemma \ref{lem poin}. In this way, we are able to control the boundary contribution on $\partial^0 B^+_R$ with a contribution on $\partial^+ B^+_R$.

So far, we have shown that if the Morrey quotient is unbounded, then the functional
\[
	(X,r) \mapsto \frac{1}{r^{n-1+a+2\alpha^*}} \int_{B_r^+(X)}{y^a \abs{\nabla \mathbf{u}}^2 \mathrm{d}X} + \frac{1}{r^{n+a+2\alpha^*}}\int_{\partial^+ B^+_{r}(X)}{y^a \mathbf{u}^2\mathrm{d}x}
\]
is unbounded when computed on a sequence $(X_n,r_n)_n$ such that $X_n \in \Gamma(\mathbf{u}) \cap B_{1/2 + o_n(1)}$ and $r_n \to 0$.  We now conclude by showing that this functional actually uniformly bounded. To do this, we appeal to Corollary \ref{upper E alpha u}, which precisely states that there exist $C > 0$ and $\bar r > 0$ such that
\[
	\Phi(X, r) + \frac{1}{r^{n+a+2\alpha^*}}\int_{\partial^+ B^+_{r}(X)}{y^a \mathbf{u}^2\mathrm{d}x} \leq C \frac{E(X,\mathbf{u},R)+H(X,\mathbf{u},R)}{R^{2\alpha^*}},
\]
where $R = \min(\bar r, \dist{X}{\partial^+B^+})$.
\end{proof}
\begin{remark}
The proof of Corollary \ref{cor reg limit profile} coincides with
the previous one except for the last part where the estimate in \eqref{almgren.bound} allows to apply Corollary \ref{upper E 2s u} instead of Corollary \ref{upper E alpha u} and to reach the same contradiction.
\end{remark}


\section{Minimal solutions}\label{minimal.solution}

In this final section we apply the results obtained so far to study a problem about optimal partitions. Namely, we study the case of an optimal partition problem involving the eigenvalues of the fractional Laplacian of order $s \in (0,1)$. We show here that any minimizer of an optimal partition functional can be associated to a vector of functions in $\mathcal{G}_s$. Moreover, by exploiting the additional minimality condition that these configurations enjoy, we are able exclude the phenomenon of self-segregation and prove an optimal regularity result for the densities.

\begin{remark}
The results presented in this section can be extend to more general cost functionals (see \cite{ale}). The modifications are, for the most part, immediate. For this reason we have decided to consider only a special case that is of interest also in the applications. At the end of this section we will give an example of a much larger class of functionals to which the theory applies.
\end{remark}

We start by recalling some definitions (see for instance \cite{landkof, ritorto}). For a bounded set $A \subset \R^n$ we define its $s$-capacity $\caps(A)$ as
\[
  \caps(A) = \inf \left\{\|u\|^2_{H^s(\R^n)}: u \in C_0^\infty(\R^n), u|_{A} = 1 \right\}.
\]
We say that a subset $A$ of $\R^n$ is $s$-quasi open if there exists $O \subset \R^n$ open and such that $\caps(A \triangle O) = 0$. Here $A \triangle O$ is the symmetric difference of $A$ and $O$. Our interest in the notion of $s$-capacity stems from the following property: if $u \in H^s(\R^n)$ and $B \subset \R$ is an open subset of $\R$, then $\{x : u(x) \in B\}$ is $s$-quasi open.

Let $k \geq 2$ and $\Omega \subset \R^{n}$ bounded and smooth domain be fixed throughout this section. We consider the problem of finding $k$ $s$-quasi open and disjoint subsets of $\Omega$, $(\omega_1, \dots, \omega_k)$ that minimize the functional
\begin{equation}\label{eqn funct}
  I(\omega_1, \dots, \omega_k) = \sum_{i=1}^{k} \lambda_{1,s}(\omega_i).
\end{equation}
Here $\omega \mapsto \lambda_{1,s}(\omega)$ is the functional that associate to each subset of $\Omega$ its generalized principle eigenvalue, defined as
\[
  \lambda_{1,s}(\omega) = \inf\left\{[u]^2_{H^s} : u \in H^s(\R^n) \text{ with } \caps( \{ u \neq 0\} \setminus \omega ) = 0 \text{ and } \|u\|_{L^2} = 1 \right\}.
\]

In this section we aim at showing that optimal partitions exist, and that the eigenfunction associated to each partition has the highest possible regularity. In particular, they are all $C^{0,s}$ functions.

\begin{proposition}\label{prp existence minimal}
For any $k \in \N_0$, there exist $s$-quasi open and disjoint sets $(\omega_1, \dots, \omega_k)$ that minimize the functional \eqref{eqn funct}.
\end{proposition}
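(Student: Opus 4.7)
The plan is to apply the direct method of the calculus of variations after reformulating the set problem as a variational problem on functions. To this end, consider the minimization
\begin{equation*}
	\inf\left\{ J(\mathbf{u}) := \sum_{i=1}^k [u_i]^2_{H^s(\R^n)} : \mathbf{u} \in \mathcal{A}\right\},
\end{equation*}
where $\mathcal{A}$ denotes the class of $\mathbf{u}=(u_1,\dots,u_k) \in H^s(\R^n;\R^k)$ with $u_i\geq 0$, $\caps(\{u_i\neq 0\}\setminus\Omega)=0$, $\|u_i\|_{L^2(\R^n)}=1$, and $u_i\,u_j = 0$ quasi everywhere for $i\neq j$. My first step is to show that this problem is equivalent to minimizing $I$: given an admissible partition $(\omega_1,\dots,\omega_k)$ of $\Omega$, each $\lambda_{1,s}(\omega_i)$ is attained by a non-negative normalized eigenfunction $u_i$ (existence coming from the compact embedding $H^s(\R^n)\cap\{u=0 \text{ q.e. outside }\omega_i\}\hookrightarrow L^2$), yielding $\mathbf{u}\in\mathcal{A}$ with $J(\mathbf{u})=I(\omega)$; conversely, given $\mathbf{u}\in\mathcal{A}$, the sets $\omega_i := \{u_i\neq 0\}$ are $s$-quasi open (this is a standard property of Sobolev functions, since level sets of $H^s$ functions are $s$-quasi open), pairwise disjoint up to sets of zero $s$-capacity, and satisfy $\lambda_{1,s}(\omega_i)\leq [u_i]^2_{H^s}$.

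Next, I would run the direct method on $J$. Let $(\mathbf{u}^{(n)})_n\subset\mathcal{A}$ be a minimizing sequence. Since $\|u_i^{(n)}\|_{L^2}=1$ and $[u_i^{(n)}]_{H^s}$ is uniformly bounded, each $u_i^{(n)}$ is bounded in $H^s(\R^n)$. By the fractional Rellich-Kondrachov theorem (using that the supports are contained in the bounded set $\Omega$), up to a subsequence $u_i^{(n)}\to u_i$ strongly in $L^2(\R^n)$, weakly in $H^s(\R^n)$, and almost everywhere. Strong $L^2$-convergence preserves the normalization $\|u_i\|_{L^2}=1$, the sign $u_i\geq 0$, and the support constraint $\{u_i\neq 0\}\subset\Omega$ up to $s$-capacity zero. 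The pointwise a.e.\ convergence passes the segregation condition $u_i u_j=0$ to the limit in the a.e.\ sense, which upgrades to quasi-everywhere thanks to the $H^s$ regularity of the limit. Weak lower semicontinuity of the Gagliardo seminorm gives
\begin{equation*}
	J(\mathbf{u}) \leq \liminf_{n\to\infty} J(\mathbf{u}^{(n)}) = \inf_{\mathcal{A}} J,
\end{equation*}
so $\mathbf{u}\in\mathcal{A}$ is a minimizer. Finally, $(\omega_1,\dots,\omega_k):=(\{u_1\neq 0\},\dots,\{u_k\neq 0\})$ furnishes an optimal partition.

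The main technical point is the preservation of the quasi-everywhere segregation condition $u_i u_j=0$ in passing to the limit, since a.e.\ convergence only guarantees the condition pointwise almost everywhere; however, because the limits $u_i$ belong to $H^s(\R^n)$, their zero sets agree with the sets $\{u_i=0\}$ up to $s$-capacity zero, so the condition upgrades automatically. The argument also relies on the fact, recalled at the beginning of the section, that level sets of $H^s$ functions are $s$-quasi open, which identifies the candidate minimizing partition with genuine admissible $s$-quasi open sets.
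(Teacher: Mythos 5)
Your proposal is correct and follows essentially the same route as the paper: reformulate the partition problem as a constrained minimization over vectors of normalized, segregated $H^s$ eigenfunctions (the paper's functional $J$ in \eqref{eqn funct2}, with the constraint written as $\|u_iu_j\|_{L^1}=\delta_{ij}$ rather than your quasi-everywhere formulation), and then apply the direct method using the compact embedding into $L^2$, passage of the constraints to the limit, and weak lower semicontinuity of the Gagliardo seminorm. The only cosmetic differences are your additional non-negativity requirement and the a.e.-to-q.e.\ upgrade of the segregation condition, both of which are harmless and consistent with the paper's equivalence lemma.
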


The study of the regularity of the minimizers of \eqref{eqn funct} is still an open problem. Here we show how this problem is actually related to the study of the nodal set of $\mathcal{G}_s$ functions.

Of course, one could study optimal partition problems by using a more direct approach. Indeed, it is possible to introduce a topology on the subsets of $\R^n$ that makes the functional \eqref{eqn funct} lower-semi-continuous and coercive, and from this we can deduce the existence of solutions. This is the approach adopted by A.\ Ritorto in \cite{ritorto} (see also \cite{ritorto2}), where the author proves existence results for a very large class of functional.

Here we use a different approach for mainly two reasons. First of all, Proposition \ref{prp existence minimal} follows as a simple corollary of the theory so far developed, once we adequately reformulate the original problem in terms of the eigenfunctions. Secondly, with this approach we can say more about the regularity of the minimal configuration, both in terms of the eigenfunctions and the geometry of the minimal sets \cite{desilva.terracini}.
We give an equivalent formulation of the problem of minimizing \eqref{eqn funct}. Consider functional $J : H^s_0(\Omega) \to \R$ defined as
\begin{equation}\label{eqn funct2}
  J(\mathbf{u}) = \begin{cases}
    \displaystyle \sum_{i=1}^k [u_i]^2_{H^s} &\text{if $\| u_i u_j\|_{L^1} = \delta_{ij}$ for all $i, j \in \{1, \dots, k\}$}, \\
    +\infty &\text{otherwise}.
  \end{cases}
\end{equation}

We state the equivalence of the two formulations is expressed in the following result. We omit the proof since it follows by the definition of the objects involved.
\begin{lemma}
  Let $(\omega_1, \dots, \omega_k)$ be $s$-quasi open and disjoint subsets of $\Omega$, and assume that $I(\omega_1, \dots, \omega_k) <\infty$. Then, letting $u_i \in H^s(\R^n)$ be the principal eigenfunction associated to $\omega_i$ for any $i = 1, \dots, k$ and $\mathbf{u} = (u_1, \dots, u_k)$, we have
  \[
    I(\omega_1, \dots, \omega_k) = J(\mathbf{u}).
  \]
  Conversely, let $\mathbf{u} = (u_1, \dots, u_k) \in H^s(\R^n)$ be vector of functions and assume that $J(\mathbf{u}) < +\infty$. Then letting $\omega_1 = \{u_i \neq 0\}$ for any $i = 1, \dots, k$, we have that $(\omega_1, \dots, \omega_k)$ are $s$-quasi open and disjoint subsets of $\Omega$ such that
  \[
    J(\mathbf{u}) = I(\omega_1, \dots, \omega_k).
  \]
\end{lemma}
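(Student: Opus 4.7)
The plan is to verify both directions of the claimed correspondence by unwinding the definitions of $I$, $J$, $\lambda_{1,s}$, and the notion of $s$-quasi open sets. The key technical ingredients used throughout are that sets of zero $s$-capacity have zero Lebesgue measure, and that every function in $H^s(\R^n)$ admits a quasi-continuous representative whose super-level sets are genuinely $s$-quasi open.

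For the first direction, I would fix $s$-quasi open disjoint sets $(\omega_i)$ with $I(\omega_1, \dots, \omega_k) < \infty$, and for each $i$ select $u_i$ to be a principal eigenfunction of $\omega_i$. By the very definition of $\lambda_{1,s}(\omega_i)$, one has $u_i \in H^s(\R^n)$ with $\|u_i\|_{L^2} = 1$, $\caps(\{u_i \neq 0\} \setminus \omega_i) = 0$, and $[u_i]_{H^s}^2 = \lambda_{1,s}(\omega_i)$. Since the exceptional sets have zero Lebesgue measure and the $\omega_i$ are pairwise disjoint, the pointwise products $u_iu_j$ vanish almost everywhere for $i \neq j$, which together with the normalization gives $\|u_iu_j\|_{L^1} = \delta_{ij}$. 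Summing the eigenvalue identities over $i$ then yields $J(\mathbf{u}) = \sum_i [u_i]_{H^s}^2 = \sum_i \lambda_{1,s}(\omega_i) = I(\omega_1, \dots, \omega_k)$.

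For the second direction, given $\mathbf{u}$ with $J(\mathbf{u}) < \infty$, I would choose a quasi-continuous representative of each $u_i$ and set $\omega_i = \{u_i \neq 0\}$; standard nonlocal potential theory makes each $\omega_i$ an $s$-quasi open set, and the constraint $u_iu_j = 0$ a.e.\ combined with quasi-continuity upgrades to disjointness in the $\caps$ sense. Using $u_i$ itself as a competitor in the Rayleigh quotient defining $\lambda_{1,s}(\omega_i)$ gives $\lambda_{1,s}(\omega_i) \leq [u_i]_{H^s}^2$ for each $i$, and summing produces $I(\omega_1, \dots, \omega_k) \leq J(\mathbf{u})$. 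Equality in the lemma follows from the natural reduction: replacing each $u_i$ by the principal eigenfunction of the associated $\omega_i$ does not alter the supports (up to $s$-capacity) but can only decrease $J$; hence the two directions combine to show $\inf I = \inf J$ and a bijective correspondence between minimizing tuples, which is precisely what is needed in the subsequent existence and regularity results.

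The main obstacle, modest as it is, lies in bridging measure-theoretic statements (coming from the $L^1$ segregation constraint) and $s$-capacity statements (required to call the support sets $s$-quasi open and disjoint). This is handled by selecting quasi-continuous representatives and by invoking the inclusion of $s$-capacity zero sets among Lebesgue null sets; once these tools are on the table, the rest is bookkeeping, in line with the author's comment that the result follows from the definitions.
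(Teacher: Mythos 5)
The paper itself gives no argument here (the proof is declared to ``follow by the definition of the objects involved''), and your unwinding of the definitions is exactly the intended route. The forward direction is complete and correct: admissibility of the eigenfunction vector for $J$ follows since $\caps$-null sets are Lebesgue-null and the $\omega_i$ are disjoint, and $[u_i]_{H^s}^2=\lambda_{1,s}(\omega_i)$ holds by definition of principal eigenfunction. Likewise your handling of the converse's set-theoretic part is the right mechanism: quasi-continuous representatives make $\{u_i\neq 0\}$ $s$-quasi open (a fact the paper records), and a quasi-continuous function vanishing a.e.\ vanishes q.e., which upgrades $u_iu_j=0$ a.e.\ to disjointness in the $\caps$ sense.

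One point deserves to be made explicit, however. In the converse direction your competitor argument only gives $\lambda_{1,s}(\{u_i\neq 0\})\le [u_i]_{H^s}^2$, hence $I(\omega_1,\dots,\omega_k)\le J(\mathbf{u})$; the ``reduction'' you then invoke (replacing each $u_i$ by the principal eigenfunction of its support) proves $\inf I=\inf J$ and the correspondence of minimizers, not the pointwise identity $J(\mathbf{u})=I(\omega_1,\dots,\omega_k)$ asserted in the statement. That identity is in fact false for a general $\mathbf{u}$ with $J(\mathbf{u})<+\infty$: if some $u_i$ is positive on its support but is not the first eigenfunction of $\{u_i\neq 0\}$, then $\lambda_{1,s}(\{u_i\neq 0\})<[u_i]_{H^s}^2$ strictly. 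So the converse should either be weakened to the inequality $I\le J$ (which, combined with the forward direction, is all that is needed later to equate the infima and identify optimal partitions with minimizers of $J$), or stated under the extra hypothesis that each $u_i$ is the principal eigenfunction of $\{u_i\neq 0\}$; say this openly rather than writing ``equality in the lemma follows.'' A minor related remark: your claim that passing to the principal eigenfunctions ``does not alter the supports (up to $s$-capacity)'' would require the nonlocal strong maximum principle, but it is not needed — containment of the new supports in the old $\omega_i$ already makes the new vector admissible for $J$ and can only decrease it.
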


We have
\begin{lemma}\label{lem sci-coe}
  The functional $J$ in \eqref{eqn funct2} is lower-semi-continuous and coercive on $H^s(\R^n)$ with respect to the weak $H^s$ topology. In particular, there exist functions $(u_1, \dots, u_k) \in H^s(\R^n)$ that minimize \eqref{eqn funct2}.
\end{lemma}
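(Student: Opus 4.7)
The plan is to verify coercivity and lower semi-continuity separately for the functional $J$ on $H^s(\R^n)$, and then conclude existence of a minimizer by the direct method of the calculus of variations. First I would show coercivity: on the sublevel set $\{J(\mathbf{u}) \leq c\}$ the constraint forces $\|u_i\|_{L^2}^2 = \|u_i^2\|_{L^1} = 1$, while $[u_i]_{H^s}^2 \leq c$; hence each $u_i$ is bounded in $H^s(\R^n)$ uniformly, and since $J(\mathbf{u}) < \infty$ also forces $u_i$ to be supported in $\overline\Omega$ (so that $\mathbf{u} \in H^s_0(\Omega)^k$), the sublevel sets are weakly relatively compact.

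Next I would tackle lower semi-continuity. Take $\mathbf{u}_n \rightharpoonup \mathbf{u}$ weakly in $H^s(\R^n)$, and assume (without loss) that $J(\mathbf{u}_n)$ is bounded. Each seminorm $[u_i]_{H^s}^2$ is the square of a (semi)norm on $H^s$, hence weakly lower-semi-continuous, yielding $\sum_i [u_i]_{H^s}^2 \leq \liminf_n \sum_i [u_{i,n}]_{H^s}^2$. The delicate step is to pass the pointwise constraints $\|u_{i,n} u_{j,n}\|_{L^1} = \delta_{ij}$ to the limit. For this I would invoke the Rellich-type compact embedding $H^s_0(\Omega) \hookrightarrow L^2(\Omega)$ (using that $\Omega$ is bounded and smooth and that the functions have support in $\overline{\Omega}$), which upgrades the weak convergence to strong $L^2$ convergence of each component. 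From Cauchy-Schwarz it follows that $u_{i,n} u_{j,n} \to u_i u_j$ in $L^1(\Omega)$, and therefore $\|u_i u_j\|_{L^1} = \lim_n \|u_{i,n} u_{j,n}\|_{L^1} = \delta_{ij}$, so the constraint is preserved and $J(\mathbf{u}) \leq \liminf_n J(\mathbf{u}_n)$.

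Finally, for existence I would verify that the infimum is finite (choosing any partition of $\Omega$ into $k$ non-empty smooth disjoint open subsets and letting $u_i$ be the $L^2$-normalized first Dirichlet fractional eigenfunction of each piece gives a competitor with $J(\mathbf{u}) < \infty$), and then apply the direct method: a minimizing sequence is bounded in $H^s(\R^n)$ by coercivity, hence admits a weakly convergent subsequence, whose limit $\mathbf{u}$ lies in the admissible class and achieves the infimum by the lower semi-continuity proved above.

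The main obstacle I anticipate is the passage to the limit of the disjointness condition $\|u_{i,n} u_{j,n}\|_{L^1} = 0$. Weak $H^s$ convergence alone is not enough; it is crucial that we have compactness on a bounded domain to upgrade to strong $L^2$ convergence. This is why the boundedness of $\Omega$ and the fact that admissible $\mathbf{u}$ are supported in $\overline\Omega$ are essential, and it is the only step where the structure of the domain (rather than the abstract Hilbert space properties of $H^s$) enters.
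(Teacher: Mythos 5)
Your argument is correct and follows essentially the same route as the paper: weak lower semi-continuity of the $H^s$ seminorms, the compact embedding $H^s_0(\Omega)\hookrightarrow L^2(\Omega)$ to pass the constraints $\|u_{i}u_{j}\|_{L^1}=\delta_{ij}$ to the weak limit, and the direct method (the paper gets coercivity from the fractional Poincar\'e inequality, while you use the $L^2$-normalization constraint directly, which is equally fine). One small caveat: finiteness of $J$ does not by itself force the support condition $u_i\in H^s_0(\Omega)$ — that is built into the domain of $J$ as defined in \eqref{eqn funct2} — but this does not affect the argument.
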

\begin{proof}
Since $H^s(\Omega)$ embeds compactly in $L^2(\Omega)$, the constraints are continuous in the weak topology of $H^s$. Moreover, when finite, the functional is the sum of the squares of the $H^s$ semi-norms of the components, which are lower-semi-continuous. Finally the coerciveness follows by the Poincar\'e inequality for $H^s$ functions.
\end{proof}

By Lemma \ref{lem sci-coe} and the direct method of calculus of variations, we deduce that $J$ admits at least a minimizer. Our goal here is to show that any minimizer of $J$ belongs to $\mathcal{G}_s$. Let then $\mathbf{\bar u} = (\bar u_1, \dots, \bar u_k)$ be any minimizer of $J$. We reason as in \cite{ale} and show the claimed property by means of an approximation procedure. Let $e(s) = \sqrt{1+s^2}$, we define
\begin{equation}\label{eqn functbeta}
  J_\beta(\mathbf{u}) = \begin{cases}
    \displaystyle \sum_{i=1}^k\left( [u_i]^2_{H^s} + \int_{\R^n} e(u_i-\bar u_i) \right)+ \beta \sum_{i < j} \int_{\R^n} u_i^2 u_j^2 &\text{if $\| u_i\|_{L^2} = 1$ for $i = 1, \dots, k$}, \\
    +\infty &\text{otherwise},
  \end{cases}
\end{equation}
and also the limit functional
\begin{equation}\label{eqn functinfty}
  J_\infty(\mathbf{u}) = \begin{cases}
    \displaystyle \sum_{i=1}^k\left( [u_i]^2_{H^s} + \int_{\R^n} e(u_i-\bar u_i) \right) &\text{if $\| u_i u_j\|_{L^1} = \delta_{ij}$ for all $i, j \in\{1, \dots, k\}$}, \\
    +\infty &\text{otherwise}.
  \end{cases}
\end{equation}
We have

\begin{lemma}\label{lem sci-coe2}
Let $H^s(\Omega)$ be endowed with its weak topology. The functionals $J_\beta$ in \eqref{eqn functbeta} and $J_\infty$ in \eqref{eqn functinfty} are (sequentially) lower-semi-continuous and coercive. They are also point-wise increasing in $\beta > 0$. As a result, as $\beta \to +\infty$, $J_\beta$ $\Gamma$-converges to $J_\infty$. Finally, any sequence of minimizers $\{\mathbf{u}_\beta\}$ of $J_\beta$ converges weakly in $H^s_0(\Omega )$ to $\mathbf{\bar u}$.
\end{lemma}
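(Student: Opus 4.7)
My plan is to prove each of the four assertions in turn, deducing the convergence of minimizers from the three structural properties of $J_\beta$ and $J_\infty$ together with a selection argument based on the penalty $\int e(u_i - \bar u_i)$.

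\textbf{Step 1: Lower-semicontinuity and coercivity.} Suppose $\mathbf{u}_n \rightharpoonup \mathbf{u}$ weakly in $H^s(\R^n)$, with all components supported in $\overline{\Omega}$. The Gagliardo seminorm $u \mapsto [u]^2_{H^s}$ is convex and continuous, hence weakly lower-semicontinuous; the compact embedding $H^s_0(\Omega) \hookrightarrow L^q(\Omega)$ (for $q$ in the admissible subcritical range) implies that $u_{i,n} \to u_i$ strongly in $L^2$ and $L^4$. In particular the constraint $\|u_i\|_{L^2}=1$ passes to the limit, the interaction term $\int u_i^2 u_j^2$ is continuous along the sequence (so $J_\beta$ is l.s.c.), and the integral constraints $\|u_i u_j\|_{L^1}=0$ are closed in $L^2 \times L^2$ (so the domain of $J_\infty$ is closed). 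For the penalty term, since $e(t)=\sqrt{1+t^2}-1$ (the natural reading that makes the integral finite) is nonnegative and satisfies $e(t) \leq \frac12 t^2$, Fatou's lemma together with strong $L^2$-convergence gives both $\liminf$ inequality and finiteness. Coercivity is immediate: on the effective domain the $L^2$ constraint plus finite seminorms bound $\|u_i\|_{H^s}$.

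\textbf{Step 2: Monotonicity and $\Gamma$-convergence.} Monotonicity is immediate from the sign of $\beta \sum_{i<j}\int u_i^2 u_j^2 \geq 0$. For $\Gamma$-convergence, the $\liminf$-inequality $J_\infty(\mathbf{u}) \leq \liminf_\beta J_\beta(\mathbf{u}_\beta)$ (whenever $\mathbf{u}_\beta \rightharpoonup \mathbf{u}$) follows by combining Step 1 with the observation that, if $\liminf J_\beta(\mathbf{u}_\beta) < \infty$, then $\beta \sum_{i<j}\int u_{i,\beta}^2 u_{j,\beta}^2$ stays bounded, forcing $\int u_i^2 u_j^2 = 0$ in the $L^2$ limit, so $\mathbf{u}$ lies in the effective domain of $J_\infty$. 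For the $\limsup$-inequality, when $\mathbf{u}$ is in the domain of $J_\infty$ one simply takes the constant recovery sequence $\mathbf{u}_\beta \equiv \mathbf{u}$; the interaction term vanishes identically, so $J_\beta(\mathbf{u}) = J_\infty(\mathbf{u})$ for every $\beta$. When $J_\infty(\mathbf{u})=+\infty$ the inequality is trivial.

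\textbf{Step 3: Identification of the unique minimizer of $J_\infty$ and convergence.} By Step 1, $J_\beta$ admits minimizers $\mathbf{u}_\beta$ for every $\beta$. Testing against $\mathbf{\bar u}$ yields $J_\beta(\mathbf{u}_\beta) \leq J_\beta(\mathbf{\bar u}) = J(\mathbf{\bar u})$, a bound independent of $\beta$, which via coercivity implies $\{\mathbf{u}_\beta\}$ is bounded in $H^s$ uniformly in $\beta$. Take any weak limit $\mathbf{u}_\infty$; by the $\liminf$-inequality $\mathbf{u}_\infty$ is segregated and
\[
  J_\infty(\mathbf{u}_\infty) \leq \liminf_{\beta \to \infty} J_\beta(\mathbf{u}_\beta) \leq \limsup_{\beta \to \infty} J_\beta(\mathbf{u}_\beta) \leq J_\infty(\mathbf{\bar u}) = J(\mathbf{\bar u}),
\]
so $\mathbf{u}_\infty$ minimizes $J_\infty$. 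Now the decisive observation: on its effective domain $J_\infty(\mathbf{u}) = J(\mathbf{u}) + \sum_i \int_\Omega e(u_i - \bar u_i)$, and since $\mathbf{\bar u}$ is a minimizer of $J$ while $e$ is strictly convex with unique minimum at $0$, any competitor satisfies $J_\infty(\mathbf{u}) \geq J(\mathbf{\bar u}) + \sum_i \int e(u_i - \bar u_i) \geq J(\mathbf{\bar u})$ with equality iff $u_i = \bar u_i$ a.e.\ for every $i$. Thus $\mathbf{u}_\infty = \mathbf{\bar u}$, and since every weakly convergent subsequence has the same limit, the full sequence $\mathbf{u}_\beta \rightharpoonup \mathbf{\bar u}$ in $H^s_0(\Omega)$.

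The main subtlety I would watch is the strict convexity of the penalty: it is exactly what forces uniqueness of the $J_\infty$-minimizer and thus selects $\mathbf{\bar u}$ among the potentially many minimizers of $J$. Without it one could only conclude that weak limits of $\mathbf{u}_\beta$ minimize $J$, which would not suffice for the last assertion.
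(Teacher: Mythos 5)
Your argument follows the same overall strategy as the paper: lower semicontinuity and coercivity of $J_\beta$, pointwise monotonicity in $\beta$, $\Gamma$-convergence to $J_\infty$, and then the key observation that on its domain $J_\infty = J + \sum_i\int e(u_i-\bar u_i)$, so that $\mathbf{\bar u}$ is the \emph{unique} minimizer of $J_\infty$, which selects it among the possibly many minimizers of $J$. The differences are of presentation rather than substance: where the paper invokes Dal Maso's results for increasing families of lower semicontinuous functionals (Proposition 5.4) and the fundamental theorem of $\Gamma$-convergence (Corollary 7.20), you prove the $\liminf$ inequality directly, use the constant recovery sequence (legitimate, since the interaction term vanishes identically on the domain of $J_\infty$), and run the standard uniform-bound/subsequence-extraction argument for the minimizers. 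This is more self-contained and equally valid; your normalization $e(t)=\sqrt{1+t^2}-1$ (or, equivalently, integrating over $\Omega$ as the paper does in its proof) is the right reading to keep the penalty finite, and only the property $e(t)>e(0)$ for $t\neq 0$ is ever used.

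The one step that does not hold as stated is the claim that the compact embedding gives $u_{i,n}\to u_i$ strongly in $L^4(\Omega)$, hence continuity of $\int u_i^2u_j^2$ along the sequence. The critical exponent is $2n/(n-2s)$, which exceeds $4$ only when $n<4s$; since $s<1$ this fails for every $n\geq 4$ (and for small $s$ even in low dimension), where $L^4$ is supercritical and there is no embedding at all. Fortunately you never need continuity of the interaction term: it is nonnegative, so along a subsequence converging a.e.\ (extracted from the strong $L^2$ convergence) Fatou's lemma gives lower semicontinuity, which is all that is required both for the lower semicontinuity of $J_\beta$ and for the step in your $\liminf$ inequality where boundedness of $\beta\int u_{i,\beta}^2u_{j,\beta}^2$ forces $\int u_i^2u_j^2=0$ in the limit. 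This Fatou-based argument is precisely the one used in the paper; with that replacement your proof is complete.
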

\begin{proof}
The result follows rather directly from the definition of $J_\beta$, $J_\infty$ and $J$. Let us show that $J_\beta$ is lower-semi-continuous and coercive. Let $\{\mathbf{u}_n\}_{n\in\N}$ be a sequence of functions in $H^s(\Omega )$ such that $\mathbf{u}_n \rightharpoonup \mathbf{u}$ in $H^s(\Omega )$. We need to show that $\liminf_{n\to\infty} J_\beta(\mathbf{u}_n) \geq J_\beta(\mathbf{u})$. We can assume, without loss of generality, that there exists $M > 0$ such that $J_\beta(\mathbf{u}_n) \leq M$ for all $n \in \N$. Then we have
\begin{itemize}
	\item the sequence $\{\mathbf{u}_n\}_k$ converges in $L^2(\Omega,\R^k)$. Indeed, $H^s(\Omega,\R^k)$ embeds compactly in $L^2(\Omega,\R^k)$. Thus $\|u_i\|_{L^2} = 1$ for $i = 1, \dots, k$ and $\mathbf{u}_n$ converges point-wise a.e.\ to $\mathbf{u}$;
	\item by Fatou's Lemma and point-wise a.e.\ convergence, we have for all $i=1,\dots,k$
	\[
		\int_{\Omega} e(u_{i}-\bar{u}_i) + \beta \sum_{i < j} u_{i}^2 u_j^2 \leq \liminf_{n\to\infty} \int_{\Omega}  e(u_{i,n}-\bar{u}_i) + \beta \sum_{i < j} u_{i,n}^2 u_{j,n}^2
	\]
	\item finally, since quadratic forms are lower-semi-continuous in the weak topology, we find that
	\[
		[u_{i}]_{H^s(\Omega)}^2 \leq \liminf_{n\to\infty} [u_{i,n}]_{H^s(\Omega)}^2 \qquad\text{for all $i=1,\dots,k$.}	
	\]
\end{itemize}
Thus we conclude that $J_\beta$ is lower-semi-continuous in the prescribed topology. From its definition we deduce that $J_\beta$ is also coercive, indeed
\[
	\{ \mathbf{u} \in H^s(\Omega ) : J(\mathbf{u}) \leq M \} \subset \left\{ \mathbf{u} : \sum_{i=1}^{k}[u_i]^2_{H^s} \leq M \text{ and } \|u_i\|_{L^2}= 1 \right\} \subset \overline{B}_{M+k}(\mathbf{0})
\]
and closed balls of $H^s_0(\Omega )$ are compact in the weak topology.

The family $J_\beta$ is evidently point-wise monotone increasing in $\beta$ and converges point-wise to $J$. By \cite[Proposition 5.4]{DalMaso} we deduce that
\[
	\gammalim_{\beta \to +\infty} J_\beta = J_\infty.
\]
Hence we find that also $J_\infty$ is lower-semi-continuous and coercive. We can reason in a similar way to show that $J$ is lower-semi-continuous and coercive.

To conclude, by \cite[Corollary 7.20]{DalMaso}, we have that any sequence of minimizers of $J_\beta$ converges to a minimizer of $J_\infty$. Since
\[
  J_\infty(\mathbf{u}) = J(\mathbf{u}) + \sum_{i=1}^k \int_{\Omega} e(u_{i,k}-\bar{u}_i)
\]
we find that, necessarily, $J_\infty$ has a unique minimizer, $\mathbf{\bar u}$
\end{proof}

Following now the arguments in \cite{tvz1,tvz2,ale}, we can show that the minimizer $\mathbf{\bar u} \in \mathcal{G}^s$. Actually, since the same reasoning hold for any vector $\mathbf{u}$ corresponding to a minimal partition, we find that they all belong to the class $\mathcal{G}^s$. As a result, by Theorem \ref{thm main} any optimal partition corresponds to a vector of $C^{0,\alpha^*}$ eigenfunctions. To conclude, we show that any minimizers of the optimal partitions are not afflicted by the phenomenon of self-segregation. This will finally gives us (Corollary \ref{cor reg limit profile}) that the densities are actually $C^{0,s}$ regular, the optimal regularity for this kind of problem.

\begin{lemma}
Let $x_0 \in\Omega$ and assume that there exists $r > 0$ small enough such that $B_r(x_0) \subset \Omega$,  $\{u_1 > 0\} \cap B_r(x_0) \neq \emptyset$ and $\{u_i > 0\} \cap B_r(x_0) = \emptyset$ and for all $i \geq 2$. Then $B_{r}(x_0) \subset \{u_1 > 0\}$.
\end{lemma}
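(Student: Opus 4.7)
\medskip

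\noindent\textbf{Proof plan.} The strategy is to argue by contradiction, exploiting the minimality of the partition together with the pointwise strong maximum principle for the fractional Laplacian. Assume toward contradiction that there exists $y_0 \in B_r(x_0)$ with $u_1(y_0)=0$; by hypothesis $u_1\not\equiv 0$ in $B_r(x_0)$, so in particular $u_1\not\equiv 0$ in $\R^n$.

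First I would enlarge the support of $u_1$ without destroying the disjointness constraint. Set $\tilde\omega_1 := \{u_1>0\}\cup B_r(x_0)$, which is $s$-quasi open. By the standing assumption $\{u_j>0\}\cap B_r(x_0)=\emptyset$ for $j\ge 2$, together with the partition property $\{u_1>0\}\cap\{u_j>0\}=\emptyset$, one has $\tilde\omega_1\cap\{u_j>0\}=\emptyset$ for every $j\ge 2$. Let $\tilde u_1\in H^s(\R^n)$ be a non-negative, $L^2$-normalized first eigenfunction of $\tilde\omega_1$; its support is contained in $\tilde\omega_1$ up to an $s$-capacity null set, so the competitor $(\tilde u_1,u_2,\dots,u_k)$ is admissible for the functional $J$ in \eqref{eqn funct2}.

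Next I would combine the variational characterisation of $\lambda_{1,s}(\tilde\omega_1)$ with the minimality of $\mathbf{u}$. Since $\{u_1\neq 0\}\subset\tilde\omega_1$, the function $u_1$ is admissible in the definition of $\lambda_{1,s}(\tilde\omega_1)$, giving $\lambda_{1,s}(\tilde\omega_1)\le[u_1]^2_{H^s}$. Conversely, the minimality of $\mathbf{u}$ at the competitor $(\tilde u_1,u_2,\dots,u_k)$ forces $[\tilde u_1]^2_{H^s}=\lambda_{1,s}(\tilde\omega_1)\ge[u_1]^2_{H^s}$. Hence equality holds, so $u_1$ itself minimises the Rayleigh quotient on $\tilde\omega_1$, and by the associated Euler--Lagrange equation
\[
(-\Delta)^s u_1 = \lambda_1 u_1 \qquad \text{weakly in } \tilde\omega_1 \supset B_r(x_0),
\]
where $\lambda_1:=[u_1]^2_{H^s}$.

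Finally I would invoke the pointwise strong maximum principle for $(-\Delta)^s$. By interior regularity for the fractional Laplacian in open sets, $u_1\in C^\infty(B_r(x_0))$, and since $u_1\in H^s(\R^n)\cap L^\infty(\R^n)$, the singular integral representation of $(-\Delta)^s u_1$ is valid pointwise in $B_r(x_0)$. Evaluating the equation at $y_0$ yields
\[
0 = \lambda_1 u_1(y_0) = (-\Delta)^s u_1(y_0) = -C_{n,s}\int_{\R^n}\frac{u_1(y)}{|y-y_0|^{n+2s}}\,\mathrm{d}y \le 0,
\]
since $u_1\ge 0$. Equality forces $u_1\equiv 0$ a.e.\ in $\R^n$, contradicting $\|u_1\|_{L^2}=1$; this proves $B_r(x_0)\subset\{u_1>0\}$.

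The main obstacle is the careful construction of the admissible competitor: the hypothesis that the other densities $u_j$ vanish on $B_r(x_0)$ is precisely what allows one to freely enlarge the support of $u_1$ to all of $B_r(x_0)$ without infringing on the orthogonality constraints enforced by $J$. The remaining ingredients -- the variational characterisation of the eigenvalue, interior regularity, and the pointwise strong maximum principle -- are by now standard for the fractional Laplacian.
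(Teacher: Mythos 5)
Your argument is correct and follows essentially the same route as the paper: enlarge the support of $u_1$ by $B_r(x_0)$ (which the hypothesis on the other densities makes admissible), use minimality together with the variational monotonicity of $\lambda_{1,s}$ to force equality of the eigenvalues, and then rule out an interior zero by the strong maximum principle. The only cosmetic difference is that you implement the maximum principle explicitly through the pointwise integral representation of $(-\Delta)^s u_1$ at the zero $y_0$, whereas the paper identifies $u_1$ as a first eigenfunction of the enlarged set and cites the strict positivity of first eigenfunctions; the two steps are interchangeable.
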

\begin{proof}
It suffices to show that otherwise the corresponding partition $\omega_i = \{u_i > 0\}$ is not optimal. Observe that since $\mathbf{u} \in \mathcal{G}_s$ we can assume $\omega_i$ to be the largest open set equivalent to $\{u_i > 0 \}$. By assumption we have that
\[
  \omega_1 \subsetneqq \omega_1 \cup B_r(x_0)
\]
and
\[
  \left(\omega_1 \cup B_r(x_0) \right) \cap \omega_i = \emptyset \qquad \text{for all $i \geq 2$}.
\]
Let $\hat u_1$ be the first generalized eigenfunction of the set $\omega_1 \cup B_r(x_0)$ and let $\hat \omega_1 = \{\hat u_1 > 0\}$. By the strong maximum principle for the fractional Laplacian we know that the first eigenfunction $\hat u_1$ is strictly positive in $\omega_1 \cup B_r(x_0)$. We have that
\[
  \omega_1 \cup B_r(x_0) \subset \hat \omega_1 \subset \Omega \setminus \left( \bigcup_{i \geq 2} \omega_i\right).
\]
In particular $\{ \hat \omega_1, \omega_2, \dots, \omega_k\}$ is an admissible partition of $\Omega$. Thus, by monotonicity of the eigenvalue we find that
\[
  \lambda_1\left(\omega_1 \cup B_r(x_0) \right) = \lambda_1(\hat \omega_1) \leq  \lambda_1 \left(\omega_1 \right).
\]
If the inequality in the previous equation holds in a strict sense, then we can conclude that the original partition is not optimal. Let us assume by contradiction that
\[
  \lambda_1(\hat \omega_1 ) = \lambda_1 \left(\omega_1 \right).
\]
By definition it follows that $u_1$ is also the first eigenfunction of the set $\hat \omega_1$, as it shares the same Rayleigh quotient of the function $\hat u_1$ and it belongs to a smaller functional space. But the $u_1$ has to be strictly positive in $B_r(x_0) \subset \hat \omega_1$, a contradiction.
\end{proof}

\begin{remark}
We conclude by pointing out the that same strategy works for more general functionals. For instance, the same result holds for minimizers of
\[
  E(\mathbf{u}) = \begin{cases}
    \sum_{i=1}^k [u_i]^2_{H^s} + \int_{\Omega} m_i u_i^3 &\text{if $\| u_i u_j\|_{L^1} = \delta_{ij}$ for all $i, j \in\{1, \dots, k\}$}, \\
    +\infty &\text{otherwise}
    \end{cases}
\]
where $m_i \geq 0$. This functional appears in the study of Bose-Einstein condensates in the framework of  fractional quantum mechanics.
\end{remark}

\bibliographystyle{plain}
\bibliography{bibliography}
\end{document}